\tikzset{invmidarrow/.style={postaction=decorate,decoration={markings,mark={between positions 0 and 1 step #1
with {\arrow{latex reversed}}}}}}
\numberwithin{equation}{section}
\newcommand{\ds}{\displaystyle}
\newcommand{\NN}{\mathbb{N}}
\newcommand{\arm}{{\operatorname{arm}}}
\newcommand{\leg}{{\operatorname{leg}}}
\newcommand{\aver}[1]{\langle #1 \rangle}
\renewcommand{\P}{\mathcal{P}}
\DeclareMathOperator{\rev}{rev}
\newcommand{\ASEP}{{F}}
\DeclareMathOperator{\wt}{wt}
\DeclareMathOperator{\inc}{inc}
\newtheorem{thm}{Theorem}[section]
\newtheorem{prop}[thm]{Proposition}
\newtheorem{cor}[thm]{Corollary}
\newtheorem{lem}[thm]{Lemma}
\newtheorem{notation}[thm]{Notation}
\newtheorem{rem}[thm]{Remark}
\theoremstyle{definition}
\newtheorem{defn}[thm]{Definition}
\newtheorem{eg}[thm]{Example}
\title
[The inhomogeneous $t$-PushTASEP and Macdonald polynomials]
{The inhomogeneous $t$-PushTASEP and Macdonald polynomials 
at $q=1$
}
\author{Arvind Ayyer, James Martin, Lauren Williams}
\address{Arvind Ayyer, Department of Mathematics, 
Indian Institute of Science, Bangalore  560012, India.}
\email{arvind@iisc.ac.in}
\address{James Martin, Department of Statistics, 
University of Oxford, UK.}
\email{martin@stats.ox.ac.uk}
\address{Lauren Williams, Department of Mathematics, 
Harvard University, Cambridge MA, 02138.}
\email{williams@math.harvard.edu}
\subjclass[2010]{05A10, 05A19, 05A05, 05E05, 05E10, 33D52, 60J10, 60K35}
\keywords{Macdonald polynomials, PushTASEP, ASEP polynomials, exclusion process, multiline diagram, permuted basement Macdonald polynomials, 
nonsymmetric Macdonald polynomials}
\date{\today}
\begin{document}

\begin{abstract}
We study a {multispecies} $t$-PushTASEP system on a finite
ring of $n$ sites with site-dependent rates $x_1,\dots,x_n$.
Let $\lambda=(\lambda_1,\dots,\lambda_n)$ be a partition
whose parts represent the species of the $n$ particles
on the ring.  
We show that for each composition $\eta$ obtained by permuting
the parts of $\lambda$, the stationary 
probability of being in state $\eta$ is proportional to the ASEP 
polynomial $F_{\eta}(x_1,\dots,x_n; q,t)$ at $q=1$; 
the normalizing constant (or partition function) is
the Macdonald polynomial $P_{\lambda}(x_1,\dots,x_n;q,t)$
at $q=1$. Our approach involves
new relations between the families of ASEP polynomials
and of non-symmetric Macdonald polynomials at $q=1$. 
We also use \emph{multiline diagrams}, showing that
a single jump of the PushTASEP system is closely
related to the operation of moving from one line to the next
in a multiline diagram. 
We derive symmetry properties
for the system under permutation of 
its jump rates, as well as a formula for the 
current of a single-species system.
\end{abstract}

\maketitle

\setcounter{tocdepth}{1}
\tableofcontents

\section{Introduction}

Multispecies versions of the  \textit{asymmetric simple exclusion process} (ASEP) and its relatives have been the subject
of intense study in recent years, from diverse perspectives 
in physics, probability, algebra, and combinatorics. 
The connection between the multispecies ASEP on a ring and Macdonald
polynomials was developed by Cantini, de Gier and Wheeler
\cite{cantini-degier-wheeler-2015}
and Chen, de Gier and Wheeler \cite{chen-degier-wheeler-2020}.
In these works, they define the family of 
\textit{ASEP polynomials}, which are polynomials
in variables $x_1, \dots, x_n$ whose coefficients 
are rational functions in $q$ and $t$. When specialised
to $q=1$ and $x_1=x_2=\dots=x_n$, the ASEP polynomials
describe the stationary distribution of a multispecies 
ASEP on a ring with $n$ sites. 
The ASEP polynomials are in fact 
special cases
of the 
\emph{permuted-basement Macdonald polynomials}
introduced in 
\cite{ferreira-2011},
as shown in 
\cite{corteel-mandelshtam-williams-2022}. 

A construction of the stationary distribution 
of the multispecies ASEP in terms
of \textit{multiline diagrams} was given in \cite{martin-2020},
building on the construction for the TASEP by 
Ferrari and Martin \cite{ferrari-martin-2007}
and the matrix product representation for the ASEP
given by Prolhac, Evans and Mallick \cite{prolhac-evans-mallick-2009}. Corteel, Mandelshtam and Williams
\cite{corteel-mandelshtam-williams-2022} then showed
that a generalisation of the multiline diagrams 
from \cite{martin-2020} could be used to give 
a combinatorial formula for the ASEP polynomials 
with general $x_1,\dots, x_n$, $q$ and $t$. 

The description of the mASEP stationary distribution
in terms of ASEP polynomials with identical $x_i$
invites the question: is there a 
natural multitype particle system, with 
inhomogeneous (i.e.\ site-dependent)
jump rates,
whose stationary probabilities 
are given by the ASEP polynomials with general $x_i$?
An inhomogeneous version of the ASEP itself is not believed to have
nice algebraic properties. 
The main result of this article is that a related process, 
the 
\emph{multispecies $t$-PushTASEP with inhomogeneous rates}, 
does indeed have its stationary distribution given 
by the ASEP polynomials with general $x_i$ -- see 
\cref{thm:multilrep-rej-statprob}.

Our approach involves new relations between the families
of ASEP polynomials and of non-symmetric Macdonald polynomials
at $q=1$,
building on the work of Alexandersson and Sawhney
\cite{alexandersson-sawhney-2019}.
Among other results, we show that certain ratios of 
non-symmetric Macdonald polynomials become symmetric
in the particular case $q=1$. 
We also use the multiline diagram construction
-- a single jump of the PushTASEP system
is closely related to the operation moving from 
one line to the next in a multiline diagram.

Systems related to the multispecies 
$t$-PushTASEP have previously appeared in various contexts.
The multispecies system in the case $t=0$ with homogenous
rates was already studied by Ferrari and Martin
\cite{ferrari-martin-2006},
under the name of \emph{discrete-space Hammersley-Aldous-Diaconis process}
(or long-range exclusion process~\cite{spitzer-1970}). 
A related process in discrete time (dubbed the ``frog model'')
defined on the ring was recently used by Bukh and Cox 
to study problems involving the longest common subsequence between a periodic word and a word with i.i.d.\ uniform entries. 
Moving to the inhomogeneous case,  
a single-type PushTASEP in the case $t=0$ on 
the half-line was considered by Petrov \cite{petrov-2020}, and
the multi-type $t>0$ case on a finite interval 
has been investigated by Borodin and Wheeler \cite[Section 12.5]{borodin-wheeler-2022}
in the context of the coloured stochastic six-vertex model

Most recently, in independent work, Aggarwal, Nicoletti and Petrov 
\cite{aggarwal-nicoletti-petrov-2023} obtain closely related results. They write the stationary distribution of
the multitype inhomogeneous PushTASEP (and other related models including the mASEP and the multi-type TAZRP) in terms of vertex models, which are closely related to multiline diagrams and to matrix product
formulae. Their approach is entirely different to ours, 
making extensive use of Yang-Baxter interchange relations. 

In a companion paper \cite{ayyer-martin-2023}, 
two of the authors focus on the particular case $t=0$.
We employ more direct probabilistic methods involving 
time-reversal and coupling to connect the
stationary distribution to multiline diagrams, and we
describe symmetry properties under permutation
of the rates, which apply to evolutions of
the system out of equilibrium as well as to 
the stationary distribution. 

\subsection{Definition of the $t$-PushTASEP}
\label{sec:dynamics}

In this paper we study the 
inhomogeneous $t$-PushTASEP on a ring with $n$ sites, which 
generalizes the PushTASEP studied 
in~\cite{ayyer-martin-2023}.

A configuration of the system 
is a vector (or \emph{composition}) $(\eta_1, \dots, \eta_n)$ 
whose entries are non-negative 
integers. The entry $\eta_j$ denotes the species
of the particle at site $j$. If two particles have species
$i_1$ and $i_2$ with $i_1>i_2$, we say that the particle 
of species $i_1$ is \emph{stronger} and the particle of 
species $i_2$ is \emph{weaker}.
We often refer to particles
of species $0$ as \emph{holes} or \emph{vacancies}.

The $t$-PushTASEP dynamics will preserve the number of 
particles of each species, 
so we may take the state-space of the system
to be $S_\lambda=S_n(\lambda)$, the set of compositions
which are permutations of some given
\emph{partition} $\lambda=(\lambda_1,\dots, \lambda_n)$
with $\lambda_1\geq\dots\geq\lambda_n\geq 0$. We can
describe such a partition by its vector of types 
$\mathbf{m}=(m_0, m_1, \dots, m_s)$, where 
$m_i=\#\{j:\lambda_j=i\}$ gives 
the number of particles of species $i$,
and where $s$ is the largest species in the system.
Sometimes we denote our partition by 
$\lambda=\langle s^{m_s}, \dots, 1^{m_1}, 0^{m_0} \rangle$.
We have $\sum_{i=0}^s m_i=n$.
We will always require that $m_0 \geq 1$, 
i.e.\ that the system has at least one vacancy.
The partition $\lambda$ is called the \emph{content}
of the system.

The system has positive real parameters $x_1, \dots, x_n$. 
We first define the 
transitions of the PushTASEP, i.e.\ the case $t=0$.
For each site $j$, there is an exponential clock which rings with rate $1/x_j$.
The effect of a bell ringing at site $j$ is as follows. 
If site $j$ contains a vacancy then nothing changes. 
If instead site $j$ contains a particle of species $r_0>0$, 
this particle becomes ``active''. 
It moves clockwise around the ring until 
it finds a site $j_1$ with a particle of smaller species
$r_1<r_0$. The active particle now settles at site $j_1$.
If in fact $r_1=0$ (i.e.\ the site $j_1$ was previously vacant)
then the procedure stops; otherwise the particle
of label $r_1$ becomes active and itself starts to move
clockwise around the ring looking for a site
with a particle of smaller species $r_2<r_1$. Such a procedure
continues until a vacancy is found. {All the transitions 
occur simultaneously and the original site $j$ becomes vacant at the end
of the transition}.

The $t$-PushTASEP is a generalization of the PushTASEP,
with an additional parameter $t$ which for convenience we
take to be in $[0,1)$ (though it is easy to extend to $t\geq 1$). 
Again each site $j$ has a bell ringing at rate $1/x_j$,
and we describe the effect of such a bell. 
If site $j$ is vacant then nothing changes. 
Otherwise, as above, the particle of type $r_0>0$ 
at site $j$ becomes ``active'' and will move to the location
of a weaker particle. However, for $t>0$ the move is not
deterministic. Suppose there are $m$ particles in the system
whose species is less than $r_0$ (including vacancies). 
Recall that $[m]_t = 1 + t + \cdots + t^{m-1}=\frac{1-t^m}{1-t}$ denotes 
the $t$-analogue of the integer $m$.
Then the particle at site $j$ 
will travel clockwise around the ring, and
with probability $t^{k-1}/[m]_t$, it will
move to the location of the $k$'th of these lower-species 
particles. 
If this location is not vacant, then 
the particle there becomes active, and
chooses a weaker particle to displace in the same way. 
The procedure continues until a vacancy is chosen. 
All these transitions occur simultaneously. Again
the site $j$ itself always becomes vacant {at the end of the transition}.

\begin{figure}[ht]
\begin{center}
\begin{minipage}{0.45\textwidth}
\[
\begin{array}{|c|c|}
\hline
\text{Configuration} & \text{Rate} \\
\hline &\\[-0.2cm]
(2, 4, \underline{0}, \underline 3, 2, 4, 1, 3) & \ds \frac{1}{x_3} \frac{1}{[4]_t}\\[0.5cm]
(2, 4, \underline 0, \underline 1, \underline 3, 4,  \underline 2, 3) & \ds \frac{1}{x_3} \frac{t}{[4]_t} \frac{1}{[2]_t}\\[0.5cm]
(2, 4, \underline 0, \underline 2, \underline 3, 4, 1, 3) & \ds \frac{1}{x_3}\frac{t}{[4]_t} \frac{t}{[2]_t}\\[0.5cm]
(2, 4, \underline 0, \underline 1, 2, 4, \underline 3, 3) & \ds \frac{1}{x_3}\frac{t^2}{[4]_t}\\[0.5cm]
(\underline 3, 4, \underline 0, \underline 2, 2, 4, 1, 3) & \ds \frac{1}{x_3}\frac{t^3}{[4]_t} \frac{1}{[2]_t}\\[0.5cm]
(\underline 3, 4, \underline 0, \underline 1, 2, 4, \underline 2, 3) & \ds \frac{1}{x_3}\frac{t^3}{[4]_t} \frac{t}{[2]_t}\\[0.5cm]
\hline
\end{array}
\]
\end{minipage}
$\vcenter{\hbox{
\includegraphics[height=8cm]{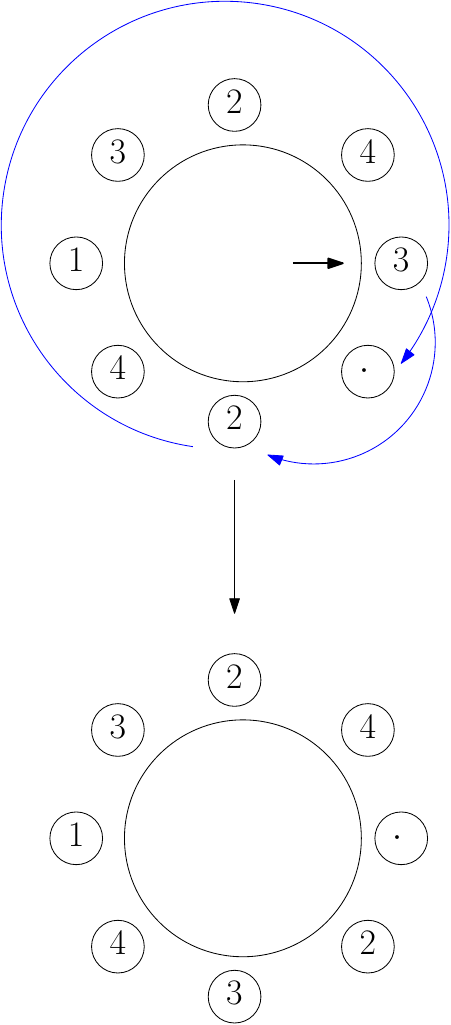}
}}$
\caption{
\label{fig:transition}
Let $\eta = (2, 4, 3, 0,2, 4, 1, \allowbreak 3)$ with $n=8$ and $s=4$. If the bell rings at site $3$,  some particles will move -- 
the table  shows the possible destination
configurations, along with the rate of the jump to each one.
 In each case the particles which moved are underlined. 
The transition corresponding to the $4$th line of the
table is illustrated on the right. Site $1$ is shown at the top of the ring, and site $3$ where the bell rings is on the extreme right.
}
\end{center}
\end{figure}

We may interpret the procedure above by saying that
the active particle moves clockwise around the ring
looking for a weaker particle to displace, but rejects
each option with probability $t$. 
See \cref{sec:project} for an equivalent definition
along these lines. 

See \cref{fig:transition} for examples
of transitions from a given configuration on a ring of size $8$. 
The  state diagram for the system defined by 
$\lambda=(2,1,0)$, i.e.\
$\mathbf{m} =  (1,1,1)$,
is given in \cref{fig:eg123t}.

\begin{figure}[h!]
\begin{center}
\includegraphics[scale=0.65]{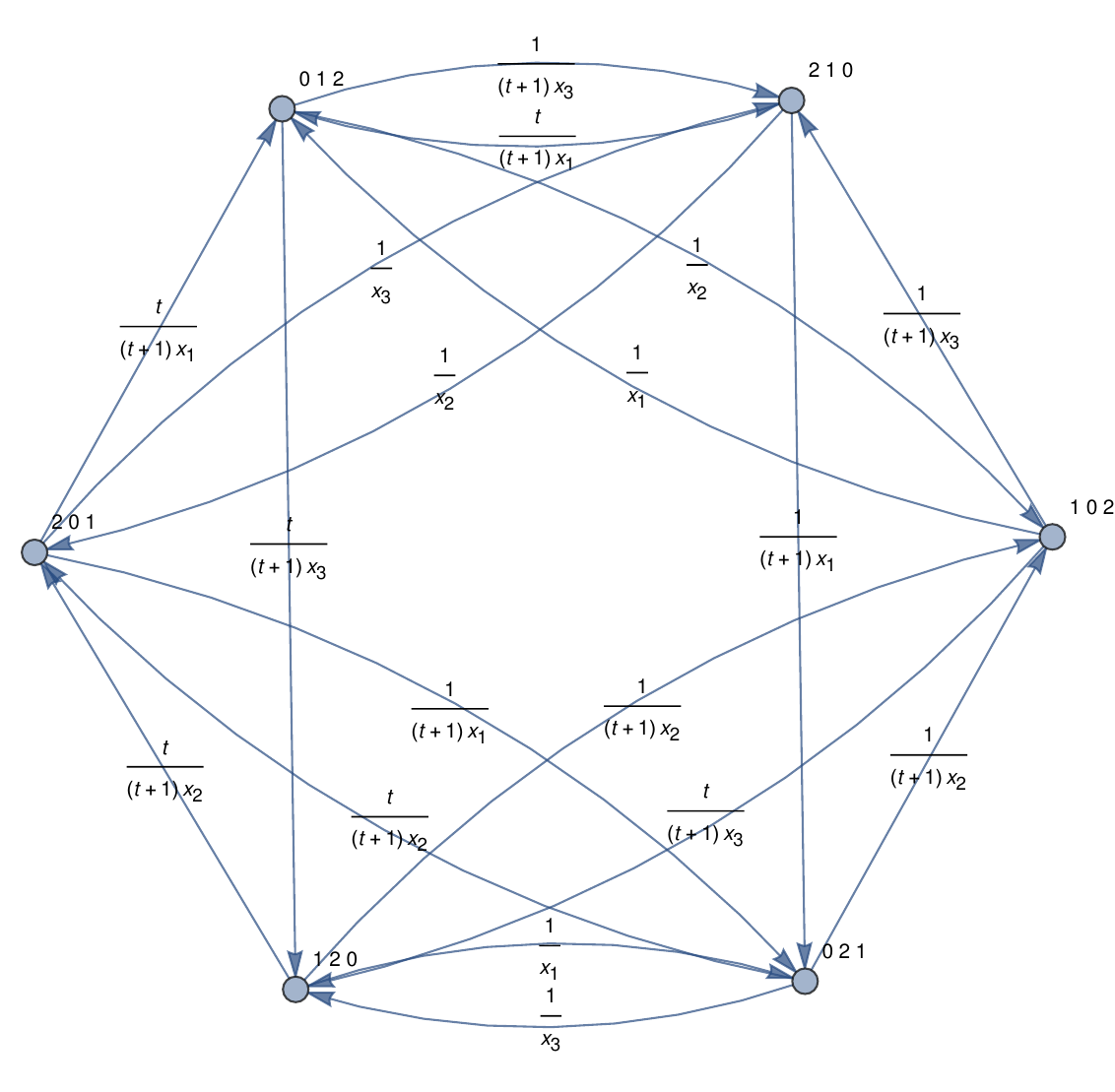}
	\caption{The transition graph of the multispecies $t$-PushTASEP for $\mathbf{m} = (1,1,1)$.}
\label{fig:eg123t}
\end{center}
\end{figure}

\subsection{Main results}

Our main result is a description of the 
stationary distribution of a $t$-PushTASEP system 
in terms of the \emph{ASEP polynomials}, cf. 
	\cref{def:ASEPpolynomials}.

\begin{thm}
\label{thm:multilrep-rej-statprob}
In the multispecies $t$-PushTASEP with content $\lambda=(\lambda_1,\dots,\lambda_n)$
	and parameters $\mathbf{x} = (x_1,\dots,x_n)$, 
the stationary probability of
a configuration $\eta\in S_{\lambda}$ is given by
\[
	\pi_{\lambda}(\eta) = \frac{\ASEP_\eta(\mathbf{x};1, t)}{P_\lambda(\mathbf{x}; 1, t)},
\]
where $\ASEP_\eta(\mathbf{x}; q, t)$ 
	is the ASEP polynomial 
from 	\cref{def:ASEPpolynomials} associated to $\eta$,
	and 
	$P_{\lambda}(\mathbf{x}; q, t)$ 
	is the Macdonald polynomial associated to $\lambda$.
\end{thm}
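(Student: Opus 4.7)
My plan would combine the combinatorial multiline-diagram formula for $\ASEP_\eta$ with a direct verification of the master equation. The overall strategy splits into two pieces. First, I would verify that the unnormalized weights $\ASEP_\eta(\mathbf{x};1,t)$ form a stationary measure for the $t$-PushTASEP. Second, the identification of the partition function with $P_\lambda(\mathbf{x};1,t)$ would follow from the standard sum formula $\sum_{\eta \in S_\lambda} \ASEP_\eta(\mathbf{x};q,t) = P_\lambda(\mathbf{x};q,t)$, which expresses a symmetric Macdonald polynomial as a sum of permuted-basement/ASEP polynomials over a permutation orbit; once stationarity is established, specializing to $q=1$ immediately gives the normalization claimed in the theorem.

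For stationarity I would exploit the multiline-diagram formula from Corteel--Mandelshtam--Williams. A multiline diagram with top row $\eta$ consists of several rows of particles on the ring, each obtained from the one below by a queueing/pushing rule weighted by $t$ and by the $x_i$. As emphasized in the introduction, a single ``line-removal'' step in a multiline diagram is combinatorially the same operation as a single jump of the $t$-PushTASEP. I would make this precise by fixing the second-from-top row to be some $\eta'$ and summing over compatible top rows; the resulting generating function, after factoring out the appropriate weight from $\ASEP_{\eta'}$, should reproduce exactly the transition rates $R(\eta'\to \eta)$. Comparing the resulting identity against the total exit rate from $\eta$ would then yield the master equation
$$
\ASEP_\eta(\mathbf{x};1,t) \sum_{j : \eta_j > 0} \frac{1}{x_j}
= \sum_{\eta' \neq \eta} \ASEP_{\eta'}(\mathbf{x};1,t)\, R(\eta' \to \eta),
$$
which is the stationarity condition for $\pi_\lambda(\eta)$.

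The main obstacle, and the technical heart of the argument, is matching the rejection-type weights $t^{k-1}/[m]_t$ appearing in the PushTASEP dynamics with the combinatorial weights produced by the multiline construction. This matching is precisely where the new relations between ASEP polynomials and non-symmetric Macdonald polynomials at $q=1$, building on Alexandersson--Sawhney, must enter: the specialization $q=1$ is what forces the ``certain ratios'' mentioned in the abstract to become symmetric in the $x_i$, and that symmetry is the hidden algebraic identity responsible for the clean stationarity relation. I would therefore expect to first develop these exchange-type relations in a preparatory section, translate them into identities for $\ASEP_\eta$ via the known identification with permuted-basement Macdonald polynomials, and only then import them to collapse the multiline-diagram sums. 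A plausible fallback, should the weight-preserving bijection prove unwieldy, would be to work purely algebraically with the permuted-basement Macdonald polynomials using Hecke-algebra-style exchange relations at $q=1$, derive the master equation line by line, and treat the multiline diagrams only as motivation.
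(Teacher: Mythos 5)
Your outline has the right ingredients (multiline diagrams, the identification of a line-removal step with a PushTASEP jump, the sum formula $\sum_\eta \ASEP_\eta = P_\lambda$ for the normalization), but it mislocates the hard part and omits two steps without which the argument does not close. First, matching the rejection weights $t^{k-1}/[m]_t$ with the multiline weights is \emph{not} where the $q=1$ algebra enters: that matching is a direct comparison of the two definitions of the local weights (the paper's \cref{lem:extra-row}) and holds essentially by construction. The real crux is that your master-equation computation needs the marginal law of the \emph{second} row of a weight-distributed multiline diagram to be proportional to $\ASEP_{\eta'}(\mathbf{x};1,t)$ for the \emph{same} system, and this is false on its face: rows $s$ down to $2$, after relabelling, form a multiline diagram of the \emph{shifted} content $\phi(\lambda)$ (one fewer row), so the row-$2$ marginal is a priori governed by ASEP polynomials of a different partition. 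The paper's \cref{lem:bottom two lines} bridges this gap by showing rows $1$ and $2$ have the same distribution, and the proof of that lemma is exactly where the Alexandersson--Sawhney $q=1$ ratio results (\cref{cor:AS}: $\ASEP_\eta/\ASEP_{\tilde\eta}$ is a symmetric function independent of $\eta$) are consumed. Once you have that, you do not need to verify global balance term by term at all: row $1$ is obtained from row $2$ by one step of the (discrete-time, bell-at-rate-$x_j^{-1}$) dynamics, and equality of the two laws \emph{is} stationarity. Without identifying this step, your "factor out $\ASEP_{\eta'}$ and read off $R(\eta'\to\eta)$" computation has no justification.

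Second, the row-to-jump correspondence only works when $\lambda$ has distinct parts, no part equal to $1$, and a single $0$ (so that rows $1$ and $2$ each have exactly one vacancy, whose position plays the role of the bell site with probability proportional to $x_j^{-1}$). For general $\lambda$ with repeated parts the correspondence between consecutive rows and a single PushTASEP transition breaks down, so a second, separate argument is required: the paper lumps a distinct-parts system onto the general one via a weakly order-preserving map $\phi$ (\cref{prop:colouring-stationary}) and then needs the algebraic identity $G_\eta/P_\lambda = F_\eta/P_\mu$ at $q=1$ (\cref{prop:f-projection}), whose proof goes through showing that $\{G_\eta\}$ is a KZ family and that $G_\mu = E_{\inc(\lambda,\phi)}$ is a symmetric-function multiple of $E_\mu$. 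Your proposal contains neither the reduction to distinct parts nor this projection identity. Your algebraic fallback is also doubtful as stated: the qKZ exchange relations encode the ASEP exchange structure and do not by themselves yield the inhomogeneous PushTASEP master equation "line by line."
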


\begin{eg}
\label{eg:210 probs}
The steady state probabilities
for the example from \cref{fig:eg123t} are proportional to 
	the ASEP polynomials $F_{\eta}$ at $q=1$, which are given in the following table.
	(\cref{eg:F} shows the same ASEP polynomials but for general $q$.)
The sum of these polynomials is the Macdonald polynomial
\[
P_{(2,1,0)}(x_1, x_2, x_2; 1, t) =  \left(x_1+x_2+x_3\right) \left(x_1 x_2 + x_1 x_3 + x_2 x_3\right),
\]
which can be seen as a partition function for the system;
note that it is independent of $t$ (see \eqref{Pla-q=1}).
\[
\begin{array}{|c|c|}
\hline
\eta & \ASEP_\eta(x_1, x_2, x_2; 1, t) \\
\hline
(2,1,0) & \ds x_1 x_2 \left(x_1 + \frac{x_3}{1+t} \right)\\ 
(2,0,1) & \ds x_1 x_3 \left(x_1 + \frac{x_2 t}{1+t} \right)\\ 
(1,2,0) & \ds x_1 x_2 \left(x_2 + \frac{x_3 t}{1+t} \right)\\ 
(1,0,2) & \ds x_1 x_3 \left(\frac{x_2}{1+t} + x_3 \right)\\ 
(0,2,1) & \ds x_2 x_3 \left(\frac{x_1}{1+t} + x_2\right)\\ 
(0,1,2) & \ds x_2 x_3 \left(\frac{x_1 t}{1+t} + x_3 \right) \\
\hline
\end{array}
\]
\end{eg}

A combinatorial formula for the ASEP polynomials in terms of multiline diagrams 
was given in 
 \cite{corteel-mandelshtam-williams-2022}.
Combining this with 
\cref{thm:multilrep-rej-statprob}, we obtain
 the following  corollary. 
\begin{cor}
Consider a multiline diagram
as defined in \cite{corteel-mandelshtam-williams-2022}
of type $\lambda$,
with parameters $x_1,\dots, x_n, t$ and $q=1$. 
The distribution of the bottom line of 
the diagram is the same as the stationary distribution
of the $t$-PushTASEP with content $\lambda$. 
\end{cor}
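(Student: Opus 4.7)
The plan is to deduce the corollary as an immediate consequence of \cref{thm:multilrep-rej-statprob} together with the multiline-diagram formula of \cite{corteel-mandelshtam-williams-2022}. Let $\mathcal{D}(\lambda)$ denote the set of multiline diagrams of type $\lambda$, and let $\wt(D;\mathbf{x},q,t)$ be the weight attached to $D\in\mathcal{D}(\lambda)$ in \cite{corteel-mandelshtam-williams-2022}. Writing $\mathrm{bot}(D)$ for the composition read off the bottom row, the key combinatorial identity that I would invoke is
\[
\ASEP_\eta(\mathbf{x};q,t) \;=\; \sum_{\substack{D \in \mathcal{D}(\lambda) \\ \mathrm{bot}(D) = \eta}} \wt(D;\mathbf{x},q,t),
\]
and summing over all $\eta \in S_\lambda$ yields the companion identity
\[
P_\lambda(\mathbf{x};q,t) \;=\; \sum_{D \in \mathcal{D}(\lambda)} \wt(D;\mathbf{x},q,t),
\]
which can equally well be read off either from the corresponding multiline formula for $P_\lambda$ in \cite{corteel-mandelshtam-williams-2022} or from the fact that the probabilities in \cref{thm:multilrep-rej-statprob} must sum to $1$.

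Next I would specialise to $q=1$ and note that each weight $\wt(D;\mathbf{x},1,t)$ is a manifestly non-negative real number when the $x_i$ are positive and $t\in[0,1)$, because the weights of \cite{corteel-mandelshtam-williams-2022} are products of local contributions of this same form. Consequently
\[
\Pr(D) \;=\; \frac{\wt(D;\mathbf{x},1,t)}{P_\lambda(\mathbf{x};1,t)}
\]
defines a bona fide probability distribution on $\mathcal{D}(\lambda)$; this is the \emph{natural weighted distribution} on multiline diagrams alluded to in the statement. Computing the marginal of the bottom row is then immediate:
\[
\Pr[\mathrm{bot}(D) = \eta] \;=\; \frac{1}{P_\lambda(\mathbf{x};1,t)} \sum_{\substack{D\in\mathcal{D}(\lambda)\\ \mathrm{bot}(D)=\eta}} \wt(D;\mathbf{x},1,t) \;=\; \frac{\ASEP_\eta(\mathbf{x};1,t)}{P_\lambda(\mathbf{x};1,t)}.
\]

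Finally I would invoke \cref{thm:multilrep-rej-statprob}, which identifies the right-hand side as the stationary probability $\pi_\lambda(\eta)$ of the inhomogeneous $t$-PushTASEP with content $\lambda$. This gives the desired equality of distributions. Since both heavy lifts -- the multiline-diagram formula for $\ASEP_\eta$ and the stationary-distribution identification -- are supplied by results already proved in the paper and in \cite{corteel-mandelshtam-williams-2022}, there is no substantive obstacle here; the only thing to be careful about is making it explicit that the distribution on diagrams in the statement is precisely the weighted distribution with weights $\wt(D;\mathbf{x},1,t)$.
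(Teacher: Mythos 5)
Your proposal is correct and follows essentially the same route as the paper, which obtains the corollary by combining the multiline-diagram formula for $\ASEP_\eta(\mathbf{x};1,t)$ from \cite{corteel-mandelshtam-williams-2022} (cf.\ \cref{thm:ASEP poly combi}) with \cref{thm:multilrep-rej-statprob}. Your additional remarks about nonnegativity of the weights and the normalization by $P_\lambda(\mathbf{x};1,t)=\sum_\eta \ASEP_\eta(\mathbf{x};1,t)$ are accurate and just make explicit what the paper leaves implicit.
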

We will define multiline diagrams in \cref{sec:generate}
for the special case where $\lambda$ has all parts distinct. 
See \cite{corteel-mandelshtam-williams-2022} for
the general definition.

From \cref{thm:multilrep-rej-statprob},
we can also derive a symmetry property
for the $t$-PushTASEP under permutation of the jump-rate
parameters $x_i$. If $O$ is an observable (an event or a
random variable),  write 
$\aver{ O }$ for its probability or expectation 
in the stationary distribution.
\begin{thm}
\label{thm:obs}
Fix $k < n$ and let $O$ be any observable 
in the stationary distribution of  multispecies $t$-PushTASEP
which depends only on the 
configuration in sites $1,2,\dots, k$. 
Then $\aver{ O }$ is
symmetric in the parameters $x_{k+1}, \dots, x_n$.
\end{thm}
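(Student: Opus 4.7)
The plan is to invoke \cref{thm:multilrep-rej-statprob} to write
\[
\aver{O} = \frac{1}{P_\lambda(\mathbf{x}; 1, t)} \sum_{\eta \in S_\lambda} O(\eta)\, \ASEP_\eta(\mathbf{x}; 1, t).
\]
The denominator is a (fully) symmetric Macdonald polynomial, so the claimed symmetry in $x_{k+1}, \dots, x_n$ of $\aver{O}$ reduces to the same property for the numerator. Grouping configurations by their first-$k$ prefix $\sigma = (\eta_1, \dots, \eta_k)$ and using that $O$ is a function of $\sigma$ alone, this reduces further to showing that for each admissible prefix $\sigma$, the partial sum
\[
G_\sigma(\mathbf{x}; t) := \sum_\tau \ASEP_{\sigma\tau}(\mathbf{x}; 1, t),
\]
where $\tau = (\eta_{k+1}, \dots, \eta_n)$ ranges over all permutations of the multiset $\lambda \setminus \sigma$, is symmetric in $x_{k+1}, \dots, x_n$.

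I would attack this partial symmetry using the Hecke-algebra exchange relations satisfied by the $\ASEP_\eta$ (as used in \cite{cantini-degier-wheeler-2015, corteel-mandelshtam-williams-2022}). The structural fact I would try to isolate is the following: for each $i$ with $k+1 \leq i \leq n-1$ and each tail $\tau$, either $\ASEP_{\sigma \tau}$ is by itself symmetric in $x_i \leftrightarrow x_{i+1}$ (when $\tau_i = \tau_{i+1}$), or a small explicit linear combination of $\ASEP_{\sigma \tau}$ and $\ASEP_{\sigma s_i \tau}$ is symmetric in $x_i \leftrightarrow x_{i+1}$ (when $\tau_i \neq \tau_{i+1}$); the second case comes from applying the Demazure--Lusztig-type relation expressing $T_i \ASEP_{\sigma\tau}$ in terms of $\ASEP_{\sigma\tau}$ and $\ASEP_{\sigma s_i \tau}$. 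Pairing the terms of $G_\sigma$ by the action of $s_i$ on tails, one then checks that every pair contributes a polynomial symmetric in $x_i \leftrightarrow x_{i+1}$, so that $G_\sigma$ itself is symmetric in these two variables. Since the transpositions $s_i$ for $k+1 \leq i \leq n-1$ generate $S_{\{k+1,\dots,n\}}$, this delivers the desired symmetry of $G_\sigma$.

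The main obstacle, I expect, is the $q=1$ specialisation: some of the rational coefficients in the general Hecke-algebra exchange relations can simplify or degenerate in this limit, and the clean ``pair-sum is symmetric'' statement invoked above must be verified carefully. The new relations between $\ASEP$-polynomials and non-symmetric Macdonald polynomials at $q=1$ announced in the paper's abstract, in particular any identities showing that certain ratios of these polynomials become symmetric, should provide exactly the algebraic input required. As a back-up, the combinatorial multiline-diagram formula of \cite{corteel-mandelshtam-williams-2022} suggests an alternative route: decompose a diagram with bottom row $\sigma\tau$ by restricting to the last $n-k$ columns, and attempt to rewrite $G_\sigma$ as a partition function on those columns that is manifestly invariant under column permutations.
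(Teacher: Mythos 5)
Your proposal is correct and follows essentially the same route as the paper: the paper likewise reduces to showing that sums of $\ASEP_\eta$ over $s_i$-invariant sets of configurations are symmetric in $x_i\leftrightarrow x_{i+1}$ for $k<i<n$, pairing $\eta$ with $s_i\eta$ and invoking the fact that $\ASEP_\eta+\ASEP_{s_i\eta}$ is symmetric in $x_i$ and $x_{i+1}$ (\cref{lem:ASEP pol sym}, quoted from \cite{corteel-mandelshtam-williams-2022}). The pair-sum symmetry you propose to derive from the Hecke relations is exactly that lemma, and it holds for generic $q$ and $t$, so the $q=1$ degeneration you worry about is not an issue.
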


Note that in \cite{ayyer-martin-2023}, 
a symmetry result which is stronger 
than \cref{thm:obs}
is proved in the case $t=0$.
That result extends also
to observables depending on the \textit{path}
of the process (not just its state at a single time), 
and also to processes out of equilibrium (if started from
suitable initial states). Whether this stronger symmetry property
also holds for $t>0$ is an interesting open question 
(see e.g.\ \cite{ayyer-mandelshtam-martin-2022} for related
discussions in the case of the totally asymmetric zero-range process).

We now consider other important quantities for the $t$-PushTASEP in its stationary distribution. Two natural such quantities are the density of a particular species, which is the probability of seeing a particle of that species at some site, and the current, which is the number of particles of a given species crossing an edge per unit time. 

We will show in 
\cref{cor:dens}
that the formulas for the density are independent of $t$, and hence the same as that for $t = 0$ given in \cite{ayyer-martin-2023}. The case of the current is much more interesting. It 
turns out that even when we have only a single species, the formula is nontrivial. We will prove the following result.

\begin{thm}
\label{thm:curr-singlePushTASEP}
The current between two adjacent sites 
	(say sites $n$ and $1$)  in the stationary distribution of the single species $t$-PushTASEP on $S_{\langle 1^{m_1}, 0^{m_0} \rangle}$ is given by
	\begin{align*}
J_{m_0, m_1}  
		&= \frac{1 + 2t + 3t^2 + \cdots + m_0 t^{m_0-1}}{1 + t + \cdots + t^{m_0-1}} \cdot
\frac{e_{m_1-1}(x_1,\dots, x_{n}) }{e_{m_1}(x_1,\dots,x_{n})}\\
		&= \frac{ \frac{d}{dt} (1 + t + t^2 + \cdots +  t^{m_0})}{1 + t + \cdots + t^{m_0-1}} \cdot
\frac{e_{m_1-1}(x_1,\dots, x_{n}) }{e_{m_1}(x_1,\dots,x_{n})},
	\end{align*}
\end{thm}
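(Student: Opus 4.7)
My plan is to apply Theorem \ref{thm:multilrep-rej-statprob} in the single-species case, write the current across the edge $(n,1)$ as an explicit sum over configurations and jumps, and then reorganize the sum via a change of variables so that a combinatorial identity involving $[m_0{+}1]_t'$ falls out.

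First I would observe that for $\lambda = \langle 1^{m_1}, 0^{m_0}\rangle$ the Macdonald polynomial $P_{\lambda}(\mathbf{x};1,t) = e_{m_1}(\mathbf{x})$, and the ASEP polynomials degenerate to the simple monomial form
$F_\eta(\mathbf{x};1,t) = \prod_{i:\eta_i=1} x_i$. This can be read off from the combinatorial formula for ASEP polynomials used earlier, or verified directly via the balance equations for the single-species $t$-PushTASEP (one checks that the $t$-dependence in entering and exiting rates cancels). Thus
\[
\pi(\eta) \;=\; \frac{1}{e_{m_1}(\mathbf{x})}\prod_{i:\eta_i=1} x_i.
\]

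Next, using the dynamics from \cref{sec:dynamics}, I write
\[
J \;=\; \sum_\eta \pi(\eta) \sum_{j:\eta_j=1} \frac{1}{x_j\,[m_0]_t}\sum_{k=1}^{m_0} t^{k-1}\,\mathbbm{1}\!\bigl[v_k(j,\eta) < j\bigr],
\]
where $v_k(j,\eta)$ is the position of the $k$th vacancy encountered clockwise from $j$. For fixed $\eta$ with particle set $A\ni j$, split the vacancy set $V = [n]\setminus A$ into $V_1 = V\cap\{j{+}1,\dots,n\}$ and $V_2 = V\cap\{1,\dots,j{-}1\}$; clockwise from $j$ the elements of $V_1$ come first and then those of $V_2$, so the condition $v_k(j,\eta) < j$ is equivalent to $k > |V_1|$. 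Hence
\[
\sum_{k:v_k<j} t^{k-1} \;=\; t^{|V_1|}\,[|V_2|]_t.
\]

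The key step is a change of summation variables: replace the pair $(\eta, j)$ with $j\in A$ by the pair $(A', j)$ where $A' = A\setminus\{j\}$ has size $m_1-1$ and $j \in [n]\setminus A'$. If $j$ is the $s$th smallest element of $[n]\setminus A'$, then $|V_2|=s-1$ and $|V_1| = m_0+1-s$, so $t^{|V_1|}[|V_2|]_t = t^{m_0+1-s}[s-1]_t$, \emph{independently} of $A'$. This yields
\[
J\cdot e_{m_1}(\mathbf{x})\,[m_0]_t \;=\; \left(\sum_{A'\subset[n],\,|A'|=m_1-1}\prod_{i\in A'} x_i\right)\sum_{s=1}^{m_0+1} t^{m_0+1-s}[s-1]_t \;=\; e_{m_1-1}(\mathbf{x})\cdot S_{m_0},
\]
where $S_{m_0} := \sum_{s=1}^{m_0+1} t^{m_0+1-s}[s-1]_t$. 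It remains to prove the identity $S_{m_0} = [m_0+1]_t'$: expanding $[s-1]_t = (1-t^{s-1})/(1-t)$ gives $S_{m_0} = \bigl([m_0+1]_t - (m_0+1)t^{m_0}\bigr)/(1-t)$, which equals $[m_0+1]_t'$ by differentiating the relation $(1-t)[m_0+1]_t = 1-t^{m_0+1}$.

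The main obstacle is recognizing the correct reindexing that decouples the combinatorial $t$-factor from the symmetric function factor; once one sees that summing over $j\notin A'$ produces an $A'$-independent quantity, both the $[m_0+1]_t'/[m_0]_t$ factor and the density ratio $e_{m_1-1}/e_{m_1}$ emerge cleanly.
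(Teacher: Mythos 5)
Your proof is correct, and while it shares the paper's overall skeleton (explicit product-form stationary measure from \cref{prop:ss-singlePushTASEP}, then a direct summation of jump rates that cross the edge $(n,1)$), the key evaluation step is genuinely different. The paper organizes the sum by the triple (source site $k$, destination vacancy $j$, number $h$ of wrapped vacancies), sums the stationary weights for each triple into a product of elementary symmetric polynomials on complementary cyclic intervals, and then needs \cref{lem:e-identity} --- a symmetric-function identity the authors describe as apparently new, proved by fixing a monomial and counting admissible pairs $(j,k)$ --- to collapse the double sum to $(h+1)e_{m_1-1}$. You instead organize by the source particle $j$, sum the geometric series over destinations to get the factor $t^{|V_1|}[|V_2|]_t$, and then use the reindexing $(A,j)\mapsto(A\setminus\{j\},j)$ to observe that this factor depends only on the rank of $j$ in $[n]\setminus A'$; summing over $j\notin A'$ then decouples cleanly into $e_{m_1-1}(\mathbf{x})\cdot S_{m_0}$ with $S_{m_0}=\sum_s t^{m_0+1-s}[s-1]_t=1+2t+\cdots+m_0t^{m_0-1}$. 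Your bijection is in essence the same counting that underlies the paper's proof of \cref{lem:e-identity} (your $A'$ plays the role of their monomial $S$), but packaged so that no separate symmetric-function identity needs to be stated or proved; what you lose is the standalone identity \eqref{e-identity}, which may be of independent interest, and what you gain is a shorter, more transparent derivation in which the factorization into $[m_0+1]_t'/[m_0]_t$ times the density ratio $e_{m_1-1}/e_{m_1}$ is visible from the outset.
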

The $t = 0$ case of this result was proved in \cite{ayyer-martin-2023} using a coloring argument. We will prove the result in \cref{sec:current}.
Generalizing the formula for the current to the multispecies $t$-PushTASEP seems considerably harder, and in \cref{sec:current} we explain why the coloring approach does not work in that case.

The structure of this paper is as follows.
In \cref{sec:push} we discuss some basic properties of the $t$-PushTASEP,
including an important recoloring property. 
In \cref{Macdonald} we 
provide background on nonsymmetric Macdonald polynomials, ASEP 
polynomials, (symmetric) Macdonald polynomials, and permuted basement Macdonald polynomials.
In \cref{sec:symm-fn}
we prove some properties
 of nonsymmetric Macdonald polynomials
and ASEP polynomials at the specialization $q=1$, in particular 
\cref{prop:f-projection}, which will 
be a main ingredient in our proof of 
\cref{thm:multilrep-rej-statprob}.
In \cref{sec:asep}
we define multiline diagrams 
and explain their relation to the $t$-PushTASEP.
In \cref{sec:proof}
we prove our main result, \cref{thm:multilrep-rej-statprob}.
We end the paper with 
the formulas for the density and the current in stationarity in 
\cref{sec:current}.

\subsection*{Acknowledgements}
{We thank Per Alexandersson, Gidi Amir, Luigi Cantini, Pablo Ferrari, Jan de Gier, Svante Linusson, Leo Petrov, and Michael Wheeler for helpful discussions.}  We especially thank Omer Angel for valuable discussions while 
this project was in its formative stages.
AA acknowledges support from the DST FIST program - 2021 [TPN - 700661].
and by SERB Core grant CRG/2021/001592.
LW is supported by the National Science Foundation under Award No.
DMS-2152991. Any opinions, findings, and conclusions or recommendations expressed in this material are
those of the author(s) and do not necessarily reflect the views of the National Science
Foundation.

\section{Basic properties of the $t$-PushTASEP}\label{sec:push}

In this section we discuss some basic properties
of the $t$-PushTASEP system which we will 
rely on in the later analysis.

\subsection{Projections and couplings}
\label{sec:projections}
The multispecies dynamics defined above have an 
important ``recolouring'' property. If we relabel 
the particles while (weakly) preserving the order
of the labels, the resulting system still follows
$t$-PushTASEP dynamics. This allows us to project 
from a ``finer'' multispecies system to a
``coarser'' one, by merging groups of two or 
more adjacent species into one. 

As an extreme case, we can consider all 
particles of species $i,\dots, s$  as ``particles''
(with the new label $1$) and all 
particles of  species $1,\dots,i-1$ as vacancies
(with the new label $0$), to obtain a single-species
process with a total of $a_i:=m_i+\dots+m_s$
particles and $n-a_i$ vacancies. 
Considering such projections for all $i=1,2,\dots, s$, 
we can identify the multispecies process as a coupling of $s$ 
single-species processes. This is a version of the \textit{basic coupling} \cite[Chapter VIII, Section 2]{liggett-1985} (under which 
the bells ring at the same sites at the same types in 
all the coupled single-species systems).

To state the above projection (or lumping) properties precisely,
we make the following definition.

\begin{defn}\label{def:orderpreserving}
We say that a function $\phi$ from $\NN$ to $\NN$ 
is \emph{weakly order-preserving} if $\phi(i)\leq\phi(j)$ whenever
$i\leq j$. For such a function $\phi$ and a composition 
$\rho=(\rho_1, \dots, \rho_n)$, 
define $\phi(\rho)$ componentwise by $\phi(\rho)=(\phi(\rho_1),
\dots, \phi(\rho_n))$. 
\end{defn}
For example, the function $\phi$ which sends elements of 
$\{0,1,2,3,4\}$ to $2$, elements of $\{5,6\}$ to $4$,
and is the identity otherwise, is a 
weakly order-preserving function.
Note that if $\rho$ is a partition 
then so is $\phi(\rho)$.

\begin{prop}
\label{prop:colouring}
Let $\phi:\NN\mapsto\NN$ be a weakly order-preserving function 
with $\phi(0)=0$.
Consider a 
multispecies $t$-PushTASEP process with content given by the
partition $\lambda$. Via the map $\phi$, this process
projects to a multispecies $t$-PushTASEP with content $\mu$,
where $\mu=\phi(\lambda)$. 
\end{prop}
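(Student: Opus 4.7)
My plan is to verify the Markov lumpability criterion directly: for every bell-site $j$ and every pair of states $\bar\eta,\bar\eta'\in S_{\phi(\lambda)}$, the total rate $\sum_{\eta'\,:\,\phi(\eta')=\bar\eta'} R_j(\eta\to\eta')$ from any $\eta$ with $\phi(\eta)=\bar\eta$ should depend only on $\bar\eta$ and equal the $t$-PushTASEP rate from $\bar\eta$ to $\bar\eta'$ in the projected system.

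The first simplification is to reduce to \emph{elementary merges}: any weakly order-preserving $\phi$ with $\phi(0)=0$ factors as a composition of maps $\tilde\psi_a$ identifying two consecutive species $a$ and $a+1$ (both sent to $a$) while shifting larger species down by one. Since the composition of valid projections is again valid, it suffices to prove the proposition for $\phi=\tilde\psi_a$. The degenerate case where the bell fires at a site with $\phi(r_0)=0$ is also easy: weak order-preservation then forces every species appearing in the chain to have $\phi$-value zero, so the projection is unchanged, matching the ``do nothing'' behavior of the projected dynamics at a vacancy.

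The core of the argument is a single-step comparison for $\phi=\tilde\psi_a$. Comparing weaker-sets shows that the original and projected dynamics already agree positionwise at every step \emph{except} those at which the current active particle has original species exactly $a+1$ -- the only case in which the original active sees an extra class of weakers (the species-$a$ ``equivalents'') that are invisible to the projected active. I would therefore fix a moment at which the active is at position $p$ with original species $a+1$, and let $e_1,\dots,e_E$ and $w_1,\dots,w_M$ (both listed clockwise from $p$) be the positions of the species-$a$ equivalents and of the $\phi$-weakers respectively. Computing the total probability that the next projection-visible event is the displacement of $w_K$ gives
\[
\frac{t^{\,a(w_K)+K-1}}{[E+M]_t}\;+\;\sum_{j=1}^{E}\frac{t^{\,j-1+\beta_j}}{[E+M]_t}\cdot\frac{t^{\,\mathrm{rank}(e_j,K)-1}}{[M]_t},
\]
where $a(w_K)$ counts equivalents clockwise before $w_K$, $\beta_j$ counts $\phi$-weakers clockwise before $e_j$, and $\mathrm{rank}(e_j,K)\in\{1,\dots,M\}$ is the cyclic rank of $w_K$ among the $\phi$-weakers as seen from $e_j$. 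The first summand is the ``direct'' contribution of the $a{+}1$ active accepting $w_K$; the summand over $j$ accounts for a first acceptance of an equivalent $e_j$ followed by acceptance of $w_K$ by the resulting species-$a$ active walking from $e_j$.

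Splitting the $j$-sum according to whether $\beta_j<K$ (so $e_j$ precedes $w_K$ clockwise) or $\beta_j\ge K$ (so $e_j$ follows $w_K$), one finds that everything collapses via the elementary $t$-polynomial identity
\[
t^{\,a}[M]_t + [a]_t + t^{\,a+M}[E-a]_t = [E+M]_t
\]
(with $a=a(w_K)$) to the single term $t^{K-1}/[M]_t$. This is precisely the $t$-PushTASEP transition rate for the projected species-$a$ active to displace the $K$-th $\phi$-weaker. The main obstacle is exactly this combinatorial bookkeeping -- in particular tracking how the cyclic rank of $w_K$ changes as the walking origin moves to $e_j$, and the matching of the geometric $t$-weights across the two cases of the split; the remaining pieces (reduction to elementary merges, the $\phi(r_0)=0$ case, and the positionwise agreement whenever the active has species $\ne a+1$) are routine.
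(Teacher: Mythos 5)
Your argument is correct, but it takes a genuinely different route from the paper's. The paper avoids all computation by reformulating the choice distribution $t^{k-1}/[m]_t$ as a memoryless clockwise walk that accepts each weaker site with probability $1-t$ and continues with probability $t$ (wrapping cyclically), and then observing that it makes no difference to the configuration whether an active particle is allowed to ``displace'' a particle of the same label: displacing a twin just relabels which particle is walking. This yields a pathwise coupling of the $\lambda$-system and the $\mu$-system under which the coarse state is the $\phi$-image of the fine state at all times, and hence a commutation of the single-bell update with recolouring. You instead verify the lumping criterion head-on: after reducing to elementary merges of species $a$ and $a+1$, you sum over the direct displacement of $w_K$ and the two-step paths through each equivalent $e_j$, and collapse the sum via the identity $t^{a}[M]_t+[a]_t+t^{a+M}[E-a]_t=[E+M]_t$. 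I checked the bookkeeping: with $\mathrm{rank}(e_j,K)=K-\beta_j$ when $\beta_j<K$ and $M-\beta_j+K$ when $\beta_j\geq K$, the exponents $j-1+\beta_j+\mathrm{rank}(e_j,K)-K$ reduce to $j-1$ and $j-1+M$ respectively, giving exactly the three blocks of the identity, so the total is indeed $t^{K-1}/[M]_t$; your treatment of the other cases (active species $\neq a+1$, the $\phi(r_0)=0$ case, and the matching of the post-transition projected configurations) is also sound, since a cascade has strictly decreasing species and so meets species $a+1$ at most once. What each approach buys: yours is elementary and self-contained, making the rate identity completely explicit; the paper's coupling is shorter, extends immediately to arbitrary weakly order-preserving $\phi$ without factoring into elementary merges, and gives the stronger pathwise statement that underlies the commutation property used later.
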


\begin{proof}
Recall the description of the $t$-PushTASEP dynamics from
\cref{sec:dynamics}. Consider an active particle
of species $r$, in a system with $m$ particles weaker than $r$ (including vacancies). For $1\leq k\leq m$, the particle
moves to the $k$th out of the $m$ locations containing such a weaker particle (considered in order clockwise from its current location)
	with probability $\frac{t^{k-1}}{1+t+\dots+t^{m-1}}$ -- it displaces
the particle currently occupying that site, which itself becomes 
active.

We may alternatively describe the procedure as follows. The particle
moves clockwise around the ring, and each time
it passes a site with a weaker particle, it settles at that site 
with probability $1-t$, and continues moving with probability $t$. 
If it passes the $m$th such site, then it continues cyclically
around the ring, with the $(m+1)$st option it considers being
the same as the first, and so on. 
Hence for $1 \leq k\leq m$, it chooses
the $k$th available option with probability 
\begin{align*}
(1-t)(t^{k-1}+t^{k-1+m} + t^{k-1+2m} + \dots) &= 
(1-t) t^{k-1}(1+t^m+t^{2m}+ \dots)\\
&= \frac{t^{k-1}}{1+t+ \dots + t^{m-1}},
\end{align*}
as in our original description of the procedure.

We make use of one further freedom -- when the active particle
passes a site containing a particle with the same label, 
it makes no difference whether we allow the active particle
to displace its ``twin'' or not. 

This description allows us to maintain a coupling between
a system of particles $(\eta(u),u\geq 0)$ 
with content $\lambda$ and a system of particles
$(\zeta(u), u\geq 0)$  with 
content $\mu$, such that at all times $u\geq 0$,
$\zeta(u)=\phi(\eta(u))$, i.e.\ 
$\zeta_j(u)=\phi(\eta_j(u))$ for all $1\leq j\leq n$. 
The bells ring at the same time in 
both systems. When a bell rings at some site in the 
$\lambda$-system currently in configuration $\eta$, we observe some collection of transitions of particles according to the description above, with each active
particle moving clockwise and settling on an available location with probability $1-t$. Under the coupling, if a particle from some site $j$ moves to some new site $j'$ in the $\lambda$-system, exactly the same 
will occur in the $\mu$-system. Since $\eta(j)>\eta(j')$
and $\phi$ is weakly order-preserving, we have that
$\phi(\eta(j))\geq\phi(\eta(j'))$, and the same transition is
possible in the $\mu$-system as required. 

Note that this can also be interpreted as a commutation property. 
Let $\eta\in S_\lambda$ be a configuration of the $\lambda$-system,
and let $j$ be any site.
Consider the follow two operations to obtain $\zeta\in S_\mu$:
(a) generate a configuration $\eta'$ resulting from the
ring of a bell at site $j$, and then recolour $\eta'$ by $\phi$
to obtain a state $\zeta=\phi(\eta')$;
(b) recolour $\eta$ to give $\zeta'=\phi(\eta)$,
and then generate a configuration $\zeta$ from $\zeta'$
resulting from the ring of a bell at site $j$.
The coupling above shows that (a) and (b) lead to the
same distribution of $\zeta\in S_\mu$. This commutation
gives the required projection property.
\end{proof}

We can then immediately deduce a corresponding
recolouring property for the stationary distributions:
\begin{prop}
\label{prop:colouring-stationary}
Let $\phi:\NN\mapsto\NN$ be a weakly order-preserving function
with $\phi(0)=0$, 
and suppose we have partitions $\mu$ and $\lambda$ with $\phi(\lambda)=\mu$. 
	Let $\pi_{(\lambda)} = (\pi_{(\lambda)}(\eta)$, $\eta\in S_\lambda)$ 
		and $\pi_{(\mu)} = (\pi_{(\mu)}(\eta)$, $\eta\in S_\mu)$ 
denote the stationary distributions of the $t$-PushTASEP
		with content $\lambda$ and $\mu$.
Then for all $\eta\in S_\mu$, 
\begin{equation}\label{projection-stationary}
	\pi_{(\mu)}(\eta)
=
\sum_{\zeta\in S_\lambda: \phi(\zeta)=\eta}
	\pi_{(\lambda)}(\zeta).
\end{equation}
		\end{prop}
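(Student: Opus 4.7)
The plan is to deduce this directly from \cref{prop:colouring} by the standard principle that a projection of a stationary Markov chain to a Markov chain is again stationary. Concretely, let $(\eta(u),\,u\ge 0)$ be the $t$-PushTASEP on $S_\lambda$ started from the stationary distribution $\pi_{(\lambda)}$, so $\eta(u)\sim\pi_{(\lambda)}$ for all $u\ge 0$. By \cref{prop:colouring}, the process $(\phi(\eta(u)),\,u\ge 0)$ is itself distributed as a $t$-PushTASEP on $S_\mu$; this is the content of the commutation/coupling property established in that proof. Consequently, the distribution of $\phi(\eta(u))$, which is the pushforward $\phi_*\pi_{(\lambda)}$ independent of $u$, is invariant under the $\mu$-system dynamics.

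Next, I invoke uniqueness of the stationary distribution for the $\mu$-system. The state space $S_\mu$ is finite, and the $\mu$-system is irreducible: from any configuration we can reach any other by letting the bells at individual sites ring in an appropriate order (each bell ring occurs with positive probability density, and since $m_0\ge 1$ there is always at least one vacancy to absorb a jump). Since $\phi_*\pi_{(\lambda)}$ and $\pi_{(\mu)}$ are both stationary distributions for this irreducible finite-state chain, they coincide. Unpacking the pushforward yields, for every $\eta\in S_\mu$,
\[
\pi_{(\mu)}(\eta)\,=\,\bigl(\phi_*\pi_{(\lambda)}\bigr)(\eta)\,=\sum_{\zeta\in S_\lambda:\ \phi(\zeta)=\eta}\pi_{(\lambda)}(\zeta),
\]
which is precisely \eqref{projection-stationary}.

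The only real subtlety is verifying that \cref{prop:colouring} gives exactly what is needed: namely, that $\phi(\eta(u))$ evolves as an \emph{autonomous} Markov chain with the $\mu$-system's transition rates, not merely that it can be coupled to such a chain. This is what the commutation diagram at the end of the proof of \cref{prop:colouring} asserts—applying a bell ring to $\eta$ and then $\phi$ has the same distribution as applying $\phi$ first and then a bell ring—so the conditional distribution of $\phi(\eta(u+du))$ given the whole past of $\phi(\eta(\cdot))$ depends only on $\phi(\eta(u))$, with the correct $\mu$-system rates. Once this Markov projection property is in hand, the stationary statement follows mechanically, so I expect no serious obstacle beyond making that point explicit.
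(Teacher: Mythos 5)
Your proposal is correct and follows exactly the route the paper intends: the paper states this proposition as an immediate consequence of \cref{prop:colouring} (the lumping/commutation property) without further argument, and your write-up simply makes explicit the standard deduction via pushforward of the stationary measure plus uniqueness from irreducibility on the finite state space $S_\mu$. No gap.
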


\subsection{Transition rates}
For completeness and later use, we give here a direct description of the 
transition rates for the $t$-PushTASEP dynamics.
Let $\lambda$ be a partition, and as above let
$m_i$ be the number of entries $i$ in $\lambda$.

Let $\eta, \eta'\in S_\lambda$. 
It follows from the description in \cref{sec:dynamics}
that a bell at $j$ can cause a transition 
from the configuration $\eta$ to the configuration $\eta'$
precisely if the following conditions are satisfied:
\begin{itemize}
\item
$j$ is the unique site which is vacant in $\eta'$ and not vacant in $\eta$. For every other site 
$i$, $\eta'(i)\geq \eta(i)$. 
\item
For each type $h>0$ with $m_h>0$, either:
\begin{enumerate}
\item
the sites occupied by species $h$ are the same in $\eta$ and $\eta'$; or,
\item
There exists exactly one site $j(h)$ such that $\eta_{j(h)}=h$ and 
$\eta'_{j(h)}\ne h$. It follows that there 
also exists exactly one site $j'(h)$ such that $\eta'_{j'(h)}=h$ and 
$\eta_{j'(h)}\ne h$.
\end{enumerate}
\end{itemize}
Define $w_{\eta,\eta'}(h)$ for each $h$ as follows. If case (1) holds
then $w_{\eta,\eta'}(h)=1$. If case (2) holds then let $K_h$ be the number of 
entries of $\lambda$
smaller than $h$ (including zeros). 
Let $\ell_h$ be the number of sites in the cyclic interval $(j(h), j'(h))$, excluding endpoints, with value smaller than $h$ in $\eta'$. 
Let 
\[
\label{whdef}
w_{\eta,\eta'}(h)=\frac{t^{\ell_h}}{1+t+\dots+t^{K_h-1}}.
\]
Suppose the system is in state $\eta$.
When a bell rings at $j$, 
a jump occurs to $\eta'$ with
probability 
\begin{equation}\label{jump probability}
\prod_{h>0 :m_h>0} w_{\eta,\eta'}(h).
\end{equation}
(The species $h$ for which case (2) holds above
are precisely those for which some particle
of species $h$ becomes ``active''
during the transition, in the sense
of \cref{sec:dynamics}.)
The transition rate from $\eta$ to $\eta'$ is
therefore
\[
\frac{1}{x_j} \prod_{h> 0:m_h>0} w_{\eta,\eta'}(h).
\]

\subsection{Single-species stationary distributions} \label{sec:single}

In the case that we have only one species of particle,
it turns out that the stationary distribution is independent of $t$.  It thus
 matches the distribution when $t = 0$ given in \cite{ayyer-martin-2023}.

\begin{prop}
\label{prop:ss-singlePushTASEP}
Let $\lambda = \langle 1^{m_1}, 0^{m_0} \rangle$ where $m_1+m_0=n$, and 
define 
\begin{equation}\label{eq:single}
\pi(\eta) := \frac{1}{e_{m_1}(x_1,\dots,x_{n})} \prod_{\substack{i=1 \\ \eta_i = 1} }^{n} x_i.
\end{equation}
Then the
	stationary probability  of $\eta \in S_{\lambda}$
	for the $t$-PushTASEP is $\pi(\eta)$.
\end{prop}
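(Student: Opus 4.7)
The plan is to verify the master equation directly: show that the proposed $\pi$ satisfies
\[
\sum_{\eta'\neq\eta} \pi(\eta') R(\eta'\to\eta) \;=\; \pi(\eta)\sum_{\eta'\neq\eta} R(\eta\to\eta')
\]
for every $\eta\in S_\lambda$. Since there is only one non-trivial species, a single transition from state $\zeta$ replaces one particle at some site $j$ by a hole, and simultaneously replaces the hole at some site $j''$ (chosen clockwise) by a particle. Thus the total outgoing rate from $\eta$ is immediately $\sum_{j:\eta_j=1} 1/x_j$, since conditional on a bell ringing at an occupied site, the destination probabilities sum to $1$.

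Next I would enumerate the incoming transitions. A jump $\eta'\to\eta$ is determined by a pair $(j',j'')$ where $j'$ is a hole in $\eta$ and $j''$ is a particle in $\eta$; the predecessor $\eta'$ is obtained from $\eta$ by swapping the contents of $j'$ and $j''$. From the definition of $\pi$ one reads off $\pi(\eta')/\pi(\eta)=x_{j'}/x_{j''}$, and the jump rate is $\tfrac{1}{x_{j'}}\cdot \tfrac{t^{k-1}}{[m_0]_t}$, where $k$ is the position of $j''$ among the holes of $\eta'$ listed clockwise starting just after $j'$. Crucially, the holes of $\eta'$ in the open cyclic interval $(j',j'')$ coincide with the holes of $\eta$ there, because the two configurations differ only at the excluded endpoints; so $k-1$ equals the number of holes of $\eta$ strictly between $j'$ and $j''$ (going clockwise).

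Combining these observations gives
\[
\frac{\pi(\eta')R(\eta'\to\eta)}{\pi(\eta)} \;=\; \frac{1}{x_{j''}}\cdot \frac{t^{k-1}}{[m_0]_t}.
\]
Now fix $j''$ and sum over holes $j'$ of $\eta$. Order the $m_0$ holes of $\eta$ as $h_1,\dots,h_{m_0}$ in clockwise order starting just after $j''$; if $j'=h_r$, then the clockwise open interval from $h_r$ to $j''$ contains exactly the holes $h_{r+1},\dots,h_{m_0}$, so $k-1=m_0-r$. Hence
\[
\sum_{r=1}^{m_0}\frac{t^{m_0-r}}{[m_0]_t} \;=\; \frac{1+t+\cdots+t^{m_0-1}}{[m_0]_t}\;=\;1.
\]
Summing the resulting identity over all particle sites $j''$ of $\eta$ reproduces the total outgoing rate $\sum_{j:\eta_j=1}1/x_j$, establishing global balance. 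Since $\pi$ is manifestly a probability distribution on $S_\lambda$ (the denominator $e_{m_1}(x_1,\dots,x_n)$ is exactly the normalizing constant), it is the unique stationary distribution of the irreducible chain.

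The only subtle step is the cyclic bookkeeping in the third paragraph, in particular the observation that $\eta$ and $\eta'$ share the same holes inside the open interval $(j',j'')$, which is what allows the sum over $j'$ to telescope; once this is in hand, the $t$-dependence cancels completely and the result reduces to the single-species formula already known from \cite{ayyer-martin-2023} at $t=0$.
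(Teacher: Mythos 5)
Your proof is correct and follows essentially the same route as the paper's: verify global balance, compute the outgoing rate $\pi(\eta)\sum_{j:\eta_j=1}1/x_j$, and group the incoming transitions by the destination site of the moving particle so that the ratio $\pi(\eta')/\pi(\eta)$ cancels the $x$-dependence and the geometric weights $t^{k-1}/[m_0]_t$ sum to $1$. The only difference is that you spell out the cyclic indexing of the holes (the exponents running over $0,\dots,m_0-1$) slightly more explicitly than the paper does.
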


\begin{proof}
Since the $t$-PushTASEP is irreducible, it suffices to verify the 
global balance equations. The total weight of outgoing transitions
from the configuration $\eta$ which involve a particle at site $j$ is $\pi(\eta)/x_j$, since the particle at site $j$ makes a transition to some vacancy with rate $1/x_j$.

As for the incoming rate to $\eta$, note that a configuration $\tau$ makes a transition to $\eta$ if there exist a pair of positions $j \neq k$ such that $\eta_j=1$, $\eta_k=0$,
$\tau_j=0$, and $\tau_k=1$, and $\eta$ and $\tau$ agree outside of positions
$j$ and $k$.
Then $\pi(\eta)/\pi(\tau) = x_j/x_k$.
If there are $a$ vacancies strictly between positions $k$ and $j$ (traveling in the clockwise direction starting at $k$), then 
the weight of the transition from $\tau$ to $\eta$ 
is $\pi(\tau) \cdot \frac{1}{x_k} \cdot \frac{t^a}{[m_0]_t} = 
\pi(\eta) \cdot \frac{1}{x_j} \cdot \frac{t^a}{[m_0]_t}.$
	But now if we fix $j$ and sum over all possible $k$ (and corresponding
	$\tau$), the weight of all these transitions to $\eta$
	will be $\pi(\eta) \cdot \frac{1}{x_j}$.
This is exactly the same as the total weight of outgoing transitions from $\eta$ involving the particle at site $j$, as argued above. Summing over all possible locations of particles completes the proof.
\end{proof}

\cref{prop:ss-singlePushTASEP} will be useful when we discuss the generation of
multiline diagrams in 
\cref{sec:generate}.  We will also use this result to analyze the density of particles
in \cref{sec:current}.

\section{Background on Macdonald and ASEP polynomials}\label{Macdonald}

In this section we define nonsymmetric Macdonald polynomials, ASEP 
polynomials, and (symmetric) Macdonald polynomials.  We also mention
some relations with permuted basement Macdonald polynomials.  All of 
the above polynomials are elements of the polynomial ring 
$\mathbb{Q}(q,t)[x_1, \dots, \allowbreak x_n]$
 in variables $x_1, \dots, x_n$,
 with coefficients in $\mathbb{Q}(q,t)$.

\subsection{Nonsymmetric Macdonald polynomials}
\label{sec:nonsym-macd}

Nonsymmetric Macdonald polynomial can be defined as eigenfunctions
of the \emph{$q$-Dunkl} or \emph{Cheredik operators}.
We will mostly follow the notation of \cite{corteel-mandelshtam-williams-2022}.

For $f := f(x_1, \dots, x_n; q, t) \in \mathbb{R}$, we define the operators $s_i$ and $L_i$, $1 \leq i \leq n-1$ as
\[
s_i (f) = f(x_1, \dots, x_{i-1}, x_{i+1}, x_i, x_{i+2}, \dots, x_n)
\]
and
\[
L_i (f) = \frac{t x_i - x_{i+1}}{x_i - x_{i+1}} \left( f - s_i (f) \right).
\]
Using these, we define operators $T_i$ and $T_i^{-1}$, $1 \leq i \leq n-1$, as
\begin{equation}
\label{def Ti}
T_i (f) = t f - L_i(f), \qquad T_i^{-1} (f) = t^{-1} f - t^{-1} L_i (f).
\end{equation}
These operators satisfy 
the \emph{Hecke algebra} relations,
\begin{equation}
\label{hecke-gens}
\begin{split}
(T_i - t) (T_i + 1) = 0, & \quad T_i T_{i+1} T_i  = T_{i+1} T_i T_{i+1}, \\
T_i T_j = T_j T_i & \quad \text{if $|i - j| \geq 1$.}
\end{split}
\end{equation}
We also define the shift operator $\omega$ as
\begin{equation}
\label{omega-def}
(\omega f) (x_1, \dots, x_n) = 
f(q x_n, x_1, \dots, x_{n-1}).
\end{equation}
Using the operators $T_i, T_i^{-1}$ and $\omega$
we define the \emph{Cherednik-Dunkl operators} $Y_i$, $1 \leq i \leq n$ by
\[
	Y_i = t^{-(i-1)} T_i^{-1} \cdots T_{n-1}^{-1} \omega T_1 \cdots T_{i-1}.
\]
One can show that these $Y_i$'s mutually commute 
and can therefore be simultaneously diagonalized.

\begin{rem}\label{rem:operator}
Let $h,f\in \mathbb{Q}(q,t)[x_1, \dots, \allowbreak x_n]$
such that $h$ is symmetric in $x_1,\dots,x_n$.
It follows from the definition of the operator $T_i$ 
that $T_i(hf)=hT_i(f)$
for all $i$.
\end{rem}

A (weak) \emph{composition} $\eta = (\eta_1, \dots, \eta_n)$ is a tuple of nonnegative integers. 
Let 
\begin{equation}\label{eq:eigenvalue}
	y_i(\eta; q, t) = q^{\eta_i} \, t^{-| \{j>i \ \vert \ \eta_j \geq \eta_i\}|-|\{j<i \ \vert \ \eta_j>\eta_i\}|}.
\end{equation}

\begin{defn}\label{def:partialorder}
Let $\P_n$ denote the set of partitions  of $n$.
The \emph{dominance order} on partitions, denoted $\leq$,
is a partial order on the partitions in $\P_n$ with 
fixed size, defined as follows. 
We say that $\lambda \leq \mu$ if $\lambda_1 + \cdots + \lambda_i \leq \mu_1 + \cdots + \mu_i$ for $1 \leq i \leq n$.

To extend the dominance order to an order on compositions $\eta$,
let $\eta^+$ be the partition obtained by ordering the parts of $\eta$ in weakly decreasing order.
We write
$\eta \leq \nu$ for compositions $\eta$ and $\nu$ if either
$\eta^+ \leq \nu^+$, or $\eta^+ = \nu^+$ and
$\eta_1 + \cdots + \eta_i \leq \nu_1 + \cdots + \nu_i$ for $1 \leq i \leq n$. 
{Note that this partial order is not the natural generalization of the dominance order to compositions.}
\end{defn}

\begin{defn}\label{def:E}\cite{cherednik-1995b, marshall-1999}
	The \emph{nonsymmetric Macdonald polynomial} $E_{\eta} = E_\eta(\mathbf{x}; q, t)$ 
	associated to a composition $\eta=(\eta_1,\dots,\eta_n)$  is the polynomial uniquely defined by the conditions
\begin{enumerate}
\item $E_\eta(\mathbf{x}; q, t)$ 
has the monomial expansion 
\[
	E_\eta(\mathbf{x};q,t) = \sum_{\zeta \leq \eta } v_{\eta, \zeta}(q, t) \mathbf{x}^\zeta,
\]
		where $\mathbf{x}^\zeta$ is shorthand for $x_1^{\zeta_1} \cdots x_n^{\zeta_n}$, and $v_{\zeta, \zeta} = 1$.

\item $Y_i E_\eta = y_i (\eta; q, t) E_\eta$ for all $1 \leq i \leq n$ and all compositions $\eta$.

\end{enumerate}
\end{defn}
The existence of these polynomials is highly nontrivial.

\begin{rem}\label{rem:eigenvalues}
It follows from \eqref{eq:eigenvalue} that 
two distinct nonsymmetric Macdonald polynomials 
$E_{\eta}$ and $E_{\tau}$ have distinct 
tuples of eigenvalues
$(y_1,\dots,y_n)$ as functions of $q$ and $t$.
However, they may not be distinct as functions of $t$
when we set $q=1$; then for example, $E_{210}$, $E_{211}$, and $E_{100}$ all have the same eigenvalues. 

However, if $\eta$ and $\tau$ are permutations of the same 
partition then it still does hold that the tuples
of their eigenvalues remain distinct under the specialisation
$q=1$. 
\end{rem}

\begin{eg}
\label{eg:nsym-macd}
Consider all permutations of the partition $(2, 1, 0)$ as compositions. The nonsymmetric Macdonald polynomials for each of them,
along with their eigenvalues, are as follows:

\medskip
\begin{tabular}{|c|c|c|c|c|}
\hline
$\eta$ & $E_\eta(x_1, x_2, x_3)$ & $\scriptstyle y_1(\eta)$ & $\scriptstyle y_2(\eta)$ & $\scriptstyle y_3(\eta)$ \\
\hline
$(0, 1, 2)$ &
\makecell{
$\frac{(t-1) \left(q^2 t^3-2 q t^2+t^2-t+1\right)}{(q t-1)^3 (q t+1)} x_{1}^{2} x_{2} 
+ \frac{(t-1)^2 }{(q t-1)^2} (x_{1} x_{2}^{2} + x_{1}^{2} x_{3})
$\\[5pt]
$+ x_{2} x_{3}^{2}
+ \frac{(t-1) \left(q^3 t^3+2 q^2 t^3-3 q^2 t^2-2 q t+q-t+2\right)}{(q t-1)^3 (q t+1)} x_{1} x_{2} x_{3}$
\\[5pt]
$+ \frac{1-t}{1-q t} (x_{2}^{2} x_{3} + x_{1} x_{3}^{2})$
}
& 
$t^{-2}$ &
$qt^{-1}$ &
$q^2$ 
\\
\hline
$(0, 2, 1)$ &
\makecell{
$\frac{(t-1)^2}{(q t-1)^2 (q t+1)} x_{1}^{2} x_{2} 
+ \frac{1-t}{1-q t} x_{1} x_{2}^{2} 
+ \frac{1-t}{1-q^{2} t^{2}} x_{1}^{2} x_{3}$ 
\\[5pt]
$+ \frac{(t-1) \left(q^2 t^2+q t-q-1\right)}{(q t-1)^2 (q t+1)} x_{1} x_{2} x_{3} 
+ x_{2}^{2} x_{3}$}
& $t^{-2}$ & $q^2$ & $qt^{-1}$
\\
\hline
$(1, 0, 2)$ &
\makecell{
$\frac{(t-1)^2}{(q t-1)^2 (q t+1)}  x_{1}^{2} x_{2} 
+ \frac{1-t}{1-q^{2} t^{2}} x_{1} x_{2}^{2} 
+ \frac{1-t}{1-q t } x_{1}^{2} x_{3}$ \\[5pt]
$+ \frac{(t-1) \left(q^2 t^2+q t-q-1\right)}{(q t-1)^2 (q t+1)} x_{1} x_{2} x_{3} 
+ x_{1} x_{3}^{2}$
}
&
$qt^{-1}$ &
$t^{-2}$ &
$q^2$ 
\\
\hline
$(1, 2, 0)$ & 
$\frac{1-t}{1-q t} x_{1}^{2} x_{2} 
+ x_{1} x_{2}^{2} 
+ \frac{q (1- t)}{1 -q t} x_{1} x_{2} x_{3}$ 
&
$qt^{-1}$ &
$q^2$ &
$t^{-2}$ 
\\
\hline
$(2, 0, 1)$ & $\frac{1-t}{1-q t} x_{1}^{2} x_{2} 
+ x_{1}^{2} x_{3} 
+ \frac{q (1-t)}{1-q t } x_{1} x_{2} x_{3}$
&
$q^2$ &
$t^{-2}$ &
$qt^{-1}$ 
\\
\hline
$(2, 1, 0)$ & $x_{1}^{2} x_{2} 
+ \frac{q(1- t)}{1-q t^{2}} x_{1} x_{2} x_{3} $
&
$q^2$ &
$qt^{-1}$ &
$t^{-2}$ 
\\
\hline
\end{tabular}

\medskip
\end{eg}

\subsection{ASEP polynomials}

Cantini, de Gier and Wheeler~\cite{cantini-degier-wheeler-2015} 
related Macdonald polynomials $P_\lambda$ at $x_i=1$ and $q=1$ to the multispecies 
ASEP on a ring, via 
the notion of a \emph{qKZ family}, 
which we now explain.

The following notion of \emph{qKZ family} was introduced in
\cite{KasataniTakeyama}, also explaining the relationship of such
polynomials to nonsymmetric Macdonald polynomials.
We use the conventions of \cite[Section 1.3]{cantini-degier-wheeler-2015}.

\begin{defn}\label{qKZ}
Fix a partition $\lambda = (\lambda_1,\dots,\lambda_n)$.
        We say that a family
        $\{f_{\eta} \}_{\eta
        \in S_{\lambda}}$
        of homogeneous
        degree $|\lambda|$
        polynomials in $n$ variables
        $\mathbf{x} = (x_1,\dots,x_n)$, with coefficients which are rational functions of
        $q$ and $t$, is a \emph{qKZ family} if they satisfy
        \begin{align}
T_i f_{\eta}(\mathbf{x}; q,t) &= f_{s_i \eta} (\mathbf{x};q,t),
\text{ when }\eta_i > \eta_{i+1}, \label{firstproperty}\\
        T_i f_{\eta}(\mathbf{x}; q,t) &= t f_{\eta}(\mathbf{x}; q,t), \text{ when }\eta_i = \eta_{i+1}, \label{secondproperty}\\
q^{\eta_n} f_{\eta}(\mathbf{x}; q,t)
&=
                f_{\eta_n,\eta_1,\dots,\eta_{n-1}}(q x_n,x_1,\dots,x_{n-1}; q,t).  \label{thirdproperty}
\end{align}
We say that a family of polynomials is a \emph{KZ family}
if they satisfy the above relations at $q=1$.
\end{defn}
\begin{rem}
Note that \eqref{thirdproperty} can be rephrased as
\[
q^{\eta_n} f_{\eta}(\mathbf{x}; q,t)
=
                (\omega f_{\eta_n,\eta_1,\dots,\eta_{n-1}})(\mathbf{x}; q,t).
                \]
\end{rem}
\begin{rem}
Using the fact that $T_i^2 = (t-1)T_i+t$, together with \eqref{firstproperty}, we 
	see that any qKZ family also satisfies
	\begin{equation}\label{fourthproperty}
		T_i f_{\eta} = (t-1)f_{\eta} + t f_{s_i \eta} \text{ when }
		\eta_i < \eta_{i+1}.
	\end{equation}
\end{rem}

The following polynomials were first 
introduced in \cite{cantini-degier-wheeler-2015}.  They were subsequently
shown to be generating functions for multiline queues in 
\cite{corteel-mandelshtam-williams-2022},
see \cref{thm:ASEP poly combi}.
They were called \emph{ASEP polynomials} by Chen, de Gier and Wheeler~\cite{chen-degier-wheeler-2020}. 

\begin{defn}[ASEP polynomials]\label{def:ASEPpolynomials}
{Given a partition $\lambda$, the}
\emph{ASEP polynomials}
$$\{\ASEP_\eta := \ASEP_\eta(\mathbf{x}; q, t) \ \vert \ \eta\in S_{\lambda}\}$$ are the unique family of polynomials 
	which are a qKZ family and such that $\ASEP_{\lambda}(\mathbf{x}; q,t) = E_{\lambda}(\mathbf{x}; q, t).$
\end{defn}

We can use the ASEP polynomials to define Macdonald polynomials.
The fact that  \cref{lem:Macdonald} agrees with the original definition
of Macdonald polynomials comes from 
\cite[Lemma~1]{cantini-degier-wheeler-2015}.

\begin{defn}\label{lem:Macdonald}
	Let $\lambda$ be a partition.  We define the \emph{Macdonald 
	polynomial} 
$P_\lambda(\mathbf{x}; q, t)$ by 
 \begin{equation}
\label{sum-asep poly}
P_\lambda(\mathbf{x}; q, t) = \sum_\eta \ASEP_\eta(\mathbf{x}; q, t),
\end{equation}
where the sum runs over all $\eta \in S_{\lambda}$, i.e.
the permutations $\eta$ of $\lambda$. 
\end{defn}

When we specialize the $x_i$'s and $q$ to be $1$,
we obtain a relation between the ASEP polynomials and the multispecies ASEP.

\begin{prop}\cite[Corollary 1]{cantini-degier-wheeler-2015}
Let $\lambda=(\lambda_1,\dots,\lambda_n)$ be a partition.
The steady state probability that the multispecies ASEP is in 
state $\eta\in S_{\lambda}$ is 
\begin{equation*}
	\frac{\ASEP_{\eta}(1,\dots,1; 1,t)}
	{P_{\lambda}(1,\dots,1;1,t)}.
\end{equation*}
\end{prop}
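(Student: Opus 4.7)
The plan is to verify that $\pi(\eta) := F_\eta(\mathbf{1}; 1, t) / P_\lambda(\mathbf{1}; 1, t)$ is the stationary distribution by checking both normalization and global balance. Normalization is immediate from \cref{lem:Macdonald}: $\sum_{\eta \in S_\lambda} F_\eta(\mathbf{1}; 1, t) = P_\lambda(\mathbf{1}; 1, t)$. The real work is the global balance equation, which I would reduce to local identities coming directly from the qKZ equations.

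Writing $p_\eta := F_\eta(\mathbf{1}; 1, t)$ and $L_i := t - T_i$, the qKZ relations \eqref{firstproperty}, \eqref{secondproperty}, and \eqref{fourthproperty} rearrange to
\[
L_i F_\eta = \begin{cases} t F_\eta - F_{s_i \eta} & \text{if } \eta_i > \eta_{i+1}, \\ 0 & \text{if } \eta_i = \eta_{i+1}, \\ F_\eta - t F_{s_i \eta} & \text{if } \eta_i < \eta_{i+1}. \end{cases}
\]
Because $L_i f = (tx_i - x_{i+1})(f - s_i f)/(x_i - x_{i+1})$ is a polynomial, I can evaluate at $\mathbf{x} = \mathbf{1}$ and $q = 1$ without trouble. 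The resulting value $-L_i F_\eta(\mathbf{1}; 1, t)$ is exactly the net inflow $\mathcal{M}_i p(\eta)$ into $\eta$ along the non-cyclic edge $(i, i+1)$, with swap rate $1$ when the smaller species is on the left and $t$ when the larger species is on the left. Summing over $i = 1, \ldots, n-1$ accounts for all non-cyclic edges.

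For the cyclic edge $(n, 1)$, I would use property \eqref{thirdproperty}: at $q = 1$ it yields $F_\eta(\mathbf{1}; 1, t) = F_{\eta^{(r)}}(\mathbf{1}; 1, t)$ for $\eta^{(r)} = (\eta_n, \eta_1, \ldots, \eta_{n-1})$, so $p$ is cyclically invariant. Combined with the cyclic symmetry of the mASEP, this rewrites the cyclic edge contribution as $\mathcal{M}_n p(\eta) = \mathcal{M}_1 p(\eta^{(r)}) = -L_1 F_{\eta^{(r)}}(\mathbf{1}; 1, t)$. The master equation then reduces to the closure identity
\[
\sum_{i=1}^{n-1} L_i F_\eta(\mathbf{1}; 1, t) \;+\; L_1 F_{\eta^{(r)}}(\mathbf{1}; 1, t) \;=\; 0,
\]
which says that the local imbalances cancel around the ring. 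This identity is the main obstacle. The cleanest way to handle it is to introduce an affine-Hecke operator $T_0$ built from $T_1, \ldots, T_{n-1}$ and the shift $\omega$ of \eqref{omega-def}, so that $L_0 := t - T_0$ applied to $F_\eta$ at $\mathbf{x} = \mathbf{1}$, $q = 1$ coincides with $L_1 F_{\eta^{(r)}}(\mathbf{1}; 1, t)$; the defining relations of the affine Hecke algebra at level one, which are encoded precisely by properties \eqref{firstproperty}--\eqref{thirdproperty}, then force $\sum_{i=0}^{n-1} L_i F_\eta(\mathbf{1}; 1, t) = 0$, completing the verification. This closure is the essential algebraic content of the argument in \cite{cantini-degier-wheeler-2015}.
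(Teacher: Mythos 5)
The paper offers no proof of this proposition---it is quoted verbatim from \cite[Corollary 1]{cantini-degier-wheeler-2015}---so the comparison here is with the argument in that reference, which your outline essentially reconstructs: normalization from \cref{lem:Macdonald}, identification of the local net inflow along edge $(i,i+1)$ with $-(t-T_i)F_\eta$ evaluated at $\mathbf{x}=\mathbf{1}$, $q=1$ via \eqref{firstproperty}--\eqref{secondproperty} and \eqref{fourthproperty}, and treatment of the wrap-around edge via \eqref{thirdproperty}. Your case analysis for $L_i F_\eta$ is correct, your rate convention is self-consistent with the sign of $-L_i$, and (one should add) irreducibility of the mASEP guarantees that a normalized vector in the kernel of the transposed generator is automatically the stationary distribution, so no separate positivity argument is needed.

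The one genuine gap is the closure identity $\sum_{i=0}^{n-1}L_iF_\eta(\mathbf{1};1,t)=0$, which is the entire content of stationarity and which you assert rather than prove. Your stated justification---that the ``defining relations of the affine Hecke algebra at level one'' force it---is not right as it stands: no abstract Hecke relation produces this identity; it is a property of the polynomial representation at the specialization $q=1$, $\mathbf{x}=\mathbf{1}$, and it holds for \emph{every} polynomial $f$, not just for qKZ families. The clean way to close it is a telescoping argument. Write $L_if=(tx_i-x_{i+1})\,\partial_if$ with $\partial_if=(f-s_if)/(x_i-x_{i+1})$ a polynomial; setting all variables to $1$ except $x_i=1+\epsilon$ and letting $\epsilon\to 0$ shows
\[
(L_if)(\mathbf{1})=(t-1)\left(\frac{\partial f}{\partial x_i}-\frac{\partial f}{\partial x_{i+1}}\right)\!(\mathbf{1}),
\]
and at $q=1$ the affine operator is the conjugate of $L_1$ by the rotation $\omega$, so $(L_0f)(\mathbf{1})=(t-1)\bigl(\partial f/\partial x_n-\partial f/\partial x_1\bigr)(\mathbf{1})$, which by \eqref{thirdproperty} coincides with $(L_1F_{\eta^{(r)}})(\mathbf{1})$ when $f=F_\eta$. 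The cyclic sum of these first-difference terms telescopes to zero around the ring, which is exactly your closure identity. With that step supplied, the argument is complete.
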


 Macdonald showed~\cite[Section~ VI, Chapter~4]{Macdonald} that
\begin{equation}
\label{Pla-q=1}
	P_\lambda(\mathbf{x};  1, t) = e_{\lambda'}(\mathbf{x}),
\end{equation}
{where $\lambda'$ denotes the conjugate partition of 
$\lambda$, and $e_{\lambda'}(\mathbf{x})$ denotes the corresponding elementary symmetric polynomial.}

Note that by \eqref{Pla-q=1},  $P_\lambda(1, \dots, 1; 1, t)$
is independent of $t$.

\begin{eg}\label{eg:F}
Consider all permutations of the tuple $(0, 1, 2)$ as compositions. The ASEP polynomials for each of them are as follows:
\[
\begin{array}{|c|c|}
\hline
\eta & \ASEP_\eta(x_1, x_2, x_3; q, t) \\
\hline
(0, 1, 2) & x_{2} x_{3}^{2}
+ \frac{t (1- t)}{1-q t^{2}} x_{1} x_{2} x_{3}\\
(0, 2, 1) & x_{2}^{2} x_{3} 
+ \frac{(1- t)}{1-q t^{2}} x_{1} x_{2} x_{3} \\
(1, 0, 2) & x_{1} x_{3}^{2} 
+ \frac{(1- t)}{1-q t^{2}} x_{1} x_{2} x_{3}\\
(1, 2, 0) & x_{1} x_{2}^{2} 
+ \frac{q t (1- t)}{1-q t^{2}} x_{1} x_{2} x_{3} \\
(2, 0, 1) & x_{1}^{2} x_{3} 
+ \frac{q t (1- t)}{1-q t^{2}} x_{1} x_{2} x_{3} \\
(2, 1, 0) & x_{1}^{2} x_{2} 
+ \frac{q(1- t)}{1-q t^{2}} x_{1} x_{2} x_{3} \\
\hline
\end{array}
\]
One can check that $\ASEP_{(2,1,0)} = E_{(2,1,0)}$ from \cref{eg:nsym-macd}.
\end{eg}

\begin{eg}
\label{eg:symm-macd}
The expansions of the Macdonald polynomials for partitions of size $3$ in the variables $x_1, x_2, x_3$ in terms of 
monomial symmetric functions $m_\lambda=m_\lambda(x_1, x_2, x_3)$
are as follows:
\[
\hspace*{-0.7cm}
\begin{array}{|c|c|}
\hline
\lambda & P_\lambda(x_1, x_2, x_3; q, t) \\
\hline
(3,0,0) &  m_{(3)} 
+ \frac{\left(q^2+q+1\right) (t-1)}{q^2 t-1} m_{(2,1)} + \frac{(q+1) \left(q^2+q+1\right) (t-1)^2}{(q t-1) \left(q^2 t-1\right)} m_{(1,1,1)}\\[0.2cm]
\hline
(2, 1,0) & m_{(2,1)} 
+ \frac{(t-1) (2 q t+q+t+2)}{q t^2-1} m_{(1,1,1)}\\[0.2cm]
\hline
(1, 1, 1) & m_{(1,1,1)} \\
\hline
\end{array}
\]
Note that the sum of the ASEP polynomials from \cref{eg:F}
equals $P_{(2,1,0)}$ as given above.
	It turns out that the expansion of the Macdonald polynomials is the same irrespective of the number of variables ({as long as there are enough variables}).	
\end{eg}

The following result will be useful in proving 
\cref{thm:obs}.
\begin{lem}\cite[Theorem 3.4 (18)]{corteel-mandelshtam-williams-2022}
\label{lem:ASEP pol sym}
Let $\eta$ be a composition and 
$1\leq i\leq n$. 
Then 
$\ASEP_\eta + \ASEP_{s_i\eta}$
is symmetric in $x_i$ and $x_{i+1}$.
\end{lem}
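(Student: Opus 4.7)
The plan is to use a simple characterization: a polynomial $f$ is symmetric in $x_i, x_{i+1}$ if and only if $T_i f = tf$. One direction is immediate from the definition of $T_i$ in \eqref{def Ti}: if $s_i f = f$ then $L_i f = 0$, so $T_i f = tf$. For the converse, write $f - s_i f = (x_i - x_{i+1}) g$ for some polynomial $g$ (this is possible since $f - s_i f$ vanishes on $x_i = x_{i+1}$). Then $L_i f = (tx_i - x_{i+1}) g$, and the hypothesis $T_i f = tf$ forces $(tx_i - x_{i+1}) g = 0$ in the polynomial ring, hence $g = 0$ and $f$ is symmetric in $x_i, x_{i+1}$.

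Given this characterization, the task reduces to proving the identity $T_i(\ASEP_\eta + \ASEP_{s_i\eta}) = t(\ASEP_\eta + \ASEP_{s_i\eta})$, which I would verify by splitting into three cases according to the comparison between $\eta_i$ and $\eta_{i+1}$. If $\eta_i = \eta_{i+1}$, then $s_i\eta = \eta$ and the sum equals $2\ASEP_\eta$; the qKZ relation \eqref{secondproperty} yields $T_i(2\ASEP_\eta) = 2t\ASEP_\eta$. If $\eta_i > \eta_{i+1}$, then \eqref{firstproperty} gives $T_i \ASEP_\eta = \ASEP_{s_i\eta}$, while \eqref{fourthproperty} applied to the composition $s_i\eta$ (whose $i$-th entry is now smaller than its $(i+1)$-st) gives $T_i \ASEP_{s_i\eta} = (t-1)\ASEP_{s_i\eta} + t\ASEP_\eta$; summing produces $t(\ASEP_\eta + \ASEP_{s_i\eta})$. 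The case $\eta_i < \eta_{i+1}$ is completely analogous (swap the roles of $\eta$ and $s_i\eta$).

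There is not really a serious obstacle here: the content of the lemma is essentially that the qKZ relations \eqref{firstproperty}, \eqref{secondproperty}, \eqref{fourthproperty} are engineered precisely so that the combination $\ASEP_\eta + \ASEP_{s_i\eta}$ is a $t$-eigenvector of $T_i$, which by the initial characterization is equivalent to being symmetric in $x_i$ and $x_{i+1}$. The only delicate point worth double-checking is the converse direction of the characterization, where one must be careful that the cancellation of $x_i - x_{i+1}$ is valid in the polynomial ring (it is, because $f - s_i f$ is divisible by this factor as a polynomial, not merely in the field of rational functions).
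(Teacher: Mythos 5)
Your proof is correct. Note that the paper does not prove this lemma itself --- it is quoted from \cite[Theorem 3.4(18)]{corteel-mandelshtam-williams-2022} --- so there is no in-paper argument to compare against; your derivation is the standard one and exactly what the qKZ relations are designed to make work. Both halves check out: the equivalence ``$f$ symmetric in $x_i,x_{i+1}$ iff $T_i f = tf$'' is handled carefully (in particular the divisibility of $f - s_i f$ by $x_i - x_{i+1}$ in the polynomial ring, and the fact that $tx_i - x_{i+1}$ is a nonzero element of an integral domain), and the three-case computation showing $T_i(\ASEP_\eta + \ASEP_{s_i\eta}) = t(\ASEP_\eta + \ASEP_{s_i\eta})$ via \eqref{firstproperty}, \eqref{secondproperty}, and \eqref{fourthproperty} is exactly right.
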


\subsection{Permuted basement Macdonald polynomials}
There is a more general class of polynomials that generalize both  the nonsymmetric Macdonald
polynomials and the ASEP polynomials.  These polynomials are called 
\emph{permuted basement Macdonald polynomials}
$E_{\alpha}^{\sigma}({\mathbf x}; q, t)$
(where $\sigma \in S_n$ and $\alpha$ is a composition with $n$ parts);
they were introduced by Ferreira in \cite{ferreira-2011} and further 
 studied in \cite{alexandersson-2019} and 
\cite{alexandersson-sawhney-2019}.
We do not need here the definition, but only the fact that they specialize to 
both the nonsymmetric Macdonald polynomials and the ASEP polynomials.
In particular, the nonsymmetric Macdonald polynomial $E_{\eta}$ 
is equal to $E_{\rev(\eta)}^{w_0}$, where $\rev(\eta)$ denotes the reverse composition $(\eta_n, \eta_{n-1},\dots,\eta_1)$ of $\eta=(\eta_1,\dots,\eta_n)$ and $w_0$ denotes the
longest permutation $(n,\ldots,2,1)$ {in one-line notation}.  
We also have the following.

\begin{prop}\cite[Proposition 4.1]{corteel-mandelshtam-williams-2022}
\label{prop:CMW}
For $\eta=(\eta_1,\ldots,\eta_n)$, 
define $\inc(\eta)$ to be the sorting of the parts of $\eta$ in increasing order. Then
\[F_{\eta}=E_{\inc(\eta)}^{\sigma}
\]
where 
$\sigma$ is the element of $S_n$ with longest  length such that
      $$\eta_{\sigma(1)} \leq \eta_{\sigma(2)} \leq 
        \dots \leq \eta_{\sigma(n)}.$$
\end{prop}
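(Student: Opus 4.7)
The plan is to establish the identity by showing that the family $\{G_\eta\}_{\eta \in S_\lambda}$ defined by $G_\eta := E_{\inc(\eta)}^{\sigma(\eta)}$ (where $\sigma(\eta)$ is the longest sorting permutation described in the statement) forms a qKZ family in the sense of \cref{qKZ} and satisfies $G_\lambda = E_\lambda$. By the uniqueness built into \cref{def:ASEPpolynomials}, this will force $G_\eta = F_\eta$ for every $\eta \in S_\lambda$.

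First I would verify the initial condition. For a weakly decreasing $\lambda$, $\inc(\lambda) = \rev(\lambda)$, and the longest permutation $\sigma$ with $\lambda_{\sigma(1)} \leq \dots \leq \lambda_{\sigma(n)}$ is exactly $w_0$: within each block of equal entries of $\lambda$ the longest-length choice is the reverse ordering of positions, and these blocks concatenate to $w_0$. Hence $G_\lambda = E_{\rev(\lambda)}^{w_0} = E_\lambda$, matching the initial condition for $F_\lambda$.

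For the Hecke recursions \eqref{firstproperty} and \eqref{secondproperty}, I would rely on the explicit action of the $T_i$ on permuted basement Macdonald polynomials with \emph{fixed} shape $\alpha$ developed by Alexandersson and Alexandersson--Sawhney; schematically, in the appropriate convention,
\[
T_i E_\alpha^\sigma \;=\;
\begin{cases}
E_\alpha^{\sigma'}, & \text{when the basement entries at positions $i,i+1$ can be swapped nontrivially},\\[2pt]
t\, E_\alpha^\sigma, & \text{when these entries are equal},
\end{cases}
\]
where $\sigma'$ differs from $\sigma$ by the appropriate adjacent transposition. A direct bookkeeping then shows that, with $\alpha = \inc(\eta)$, the assignment $\eta \mapsto \sigma(\eta)$ behaves correctly under $\eta \mapsto s_i\eta$: in the case $\eta_i > \eta_{i+1}$ one obtains $\sigma(s_i\eta) = s_i \, \sigma(\eta)$, matching the ``swap'' recursion above and yielding \eqref{firstproperty}; in the case $\eta_i = \eta_{i+1}$ the longest-length convention fixes a decreasing pattern on the equal-value block, placing us in the regime of the $t$-eigenvalue formula and hence giving \eqref{secondproperty}.

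For the cyclic relation \eqref{thirdproperty}, I would combine the analogous cyclic shift identity for the unpermuted nonsymmetric Macdonald polynomials,
\[
q^{\eta_n} E_\eta(\mathbf{x}; q,t) \;=\; E_{(\eta_n, \eta_1, \dots, \eta_{n-1})}(q x_n, x_1, \dots, x_{n-1}; q, t),
\]
with the Hecke transport above to pass between the basement $w_0$ and an arbitrary $\sigma(\eta)$. The main obstacle I anticipate is precisely this cyclic step: unlike the $T_i$ action, the shift $\omega$ is a global operation, and one must simultaneously track how $\inc(\eta)$ and $\sigma(\eta)$ transform under the cyclic relabelling of positions. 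The cleanest route is likely to reduce to the partition case $\eta = \lambda$, where \eqref{thirdproperty} is essentially the affine intertwining property for $E_\lambda$, and then use the Hecke recursions already established to propagate the identity to all compositions in $S_\lambda$; checking that this propagation commutes appropriately with $\omega$ is the delicate point.
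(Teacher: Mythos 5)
The paper does not prove this statement; it is imported verbatim from \cite{corteel-mandelshtam-williams-2022}, so there is no internal proof to compare against. Your strategy is nonetheless sound and is essentially the one used in the cited source: verify the initial condition $G_\lambda = E_{\rev(\lambda)}^{w_0} = E_\lambda$ and then propagate with the Hecke operators, using the known $T_i$-action on permuted basement Macdonald polynomials with fixed shape. Your bookkeeping for $\sigma(\lambda) = w_0$ and for $\sigma(s_i\eta) = s_i\sigma(\eta)$ in the case $\eta_i > \eta_{i+1}$ is correct (note $\eta_{\sigma(k)} = \inc(\eta)_k$, so the entries $\inc(\eta)_{\sigma^{-1}(i)}$ and $\inc(\eta)_{\sigma^{-1}(i+1)}$ are exactly $\eta_i$ and $\eta_{i+1}$, which is what places you in the eigenvector case of the $T_i$-action when they are equal). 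The one substantive comment: the cyclic relation \eqref{thirdproperty}, which you single out as the main obstacle, does not actually need to be verified for this identification. The family $\{F_\eta\}$ is already uniquely determined by the initial condition $F_\lambda = E_\lambda$ together with relation \eqref{firstproperty} alone, since every $\eta \in S_\lambda$ is reached from $\lambda$ by a sequence of $T_i$'s (this is precisely the propagation mechanism of \cref{twoKZ}). So once you have the initial condition and the two Hecke recursions for $G_\eta := E_{\inc(\eta)}^{\sigma(\eta)}$, the equality $G_\eta = F_\eta$ follows, and the cyclic relation for the $G_\eta$ is then inherited automatically from the $F_\eta$ rather than needing an independent check. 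The only remaining dependence is on the precise form of the $T_i$-action on $E_\alpha^\sigma$, which you state only schematically; that is the content of Alexandersson's results and is a legitimate citation, but you should quote the exact length/ordering conditions on $\sigma$ under which $T_i E_\alpha^\sigma = E_\alpha^{s_i\sigma}$ versus $t E_\alpha^\sigma$ to make the bookkeeping airtight.
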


\section{Nonsymmetric Macdonald and ASEP polynomials at $q=1$}
\label{sec:symm-fn}

The main goal of this section is
to prove \cref{prop:f-projection}, which will 
be a main ingredient in our proof of 
\cref{thm:multilrep-rej-statprob}. Along the way we will 
prove various 
properties of nonsymmetric Macdonald polynomials
and ASEP polynomials at the specialization $q=1$.

\subsection{Nonsymmetric Macdonald polynomials at $q=1$}\label{sec:project}

In this section we show that certain ratios of nonsymmetic Macdonald
polynomials at $q=1$ are elementary symmetric polynomials, building on 
work of
\cite{alexandersson-sawhney-2019}. 
Our first main result is the following.
\begin{thm}\label{thm:main}
	Consider a composition $\eta=(\eta_1,\dots,\eta_n)$ such that its parts of sizes $a$ and $b$ (with $a<b$) occur in 
	increasing order from left to right in $(\eta_1,\dots,\eta_n)$, and such that $\eta$ has no parts of size
	$h$ for any $a<h<b$.  (In this case, we say that the classes $a$ and $b$ are \emph{adjacent}.)
	Let $\hat{\eta}$ be the composition obtained from $\eta$ by changing all $a$'s to $b$, or changing 
	all $b$'s to $a$.  Then at $q=1$,
	\begin{equation}
		\frac{E_{\eta}}{E_{\hat{\eta}}} = 
		\left( 
		\frac{e_{\#\{i  \vert  \eta_{i} \geq b\}}}
		{e_{\#\{i  \vert  \hat{\eta}_{i} \geq b\}}} \right)^{b-a}.
	\end{equation}
\end{thm}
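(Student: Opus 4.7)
The plan is to exhibit both sides of the claimed identity as joint eigenfunctions of the Cherednik--Dunkl operators $Y_1,\ldots,Y_n$ at $q=1$ sharing the same eigenvalue tuple and the same total degree, and then to promote eigenfunction agreement to polynomial equality via a leading-coefficient comparison.

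First, using the formula~\eqref{eq:eigenvalue}, I would verify the eigenvalue coincidence $y_i(\eta;1,t) = y_i(\hat\eta;1,t)$ for every $i$. The hypothesis feeds in at two places. Since no class strictly between $a$ and $b$ appears in $\eta$, the conditions ``$\eta_j \geq a$'' and ``$\eta_j \geq b$'' agree except on the $a$-positions themselves. Since all $a$'s precede all $b$'s in $\eta$, for $\eta_i = a$ no $b$'s lie to the left of $i$, and for $\eta_i = b$ no $a$'s lie to the right. A case analysis split on whether $\eta_i \in \{a,b\}$ then shows each $t$-exponent in \eqref{eq:eigenvalue} is invariant under the relabelling $\eta \mapsto \hat\eta$; the hypothesis is exactly what makes the bookkeeping close up.

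Second, by \cref{rem:operator} each $T_i$ commutes with multiplication by any polynomial $h$ that is symmetric in $x_1,\ldots,x_n$. At $q=1$ the shift operator $\omega$ reduces to the cyclic permutation $(x_1,\ldots,x_n)\mapsto(x_n,x_1,\ldots,x_{n-1})$, which also commutes with multiplication by such $h$. Composing through the definition of $Y_i$ gives $Y_i(hf) = h\,Y_i(f)$ at $q=1$ for all $f$ and all symmetric $h$. Applied to $h = e_{k_2}^{\,b-a}$, $f=E_\eta$ and to $h = e_{k_1}^{\,b-a}$, $f=E_{\hat\eta}$, this exhibits the two sides (after clearing the denominator in the theorem statement) as joint $Y_i$-eigenfunctions with the common eigenvalue tuple from the previous step. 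A short degree count using $|\eta|+k_2(b-a) = |\hat\eta|+k_1(b-a)$, which follows directly from the hypothesis and the definitions of $k_1,k_2$, confirms both sides are homogeneous of the same total degree.

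The main obstacle will be the final step. Because distinct $E_\tau$'s can share eigenvalue tuples at $q=1$ (cf.~\cref{rem:eigenvalues}), the joint $Y_i$-eigenspace in the relevant homogeneous component is not automatically one-dimensional, so eigenfunction agreement does not immediately imply polynomial equality. I plan to attack this in two stages. Stage one handles the weakly-increasing case: here an Alexandersson--Sawhney-type identification gives $E_\eta(\mathbf{x};1,t) = e_{(\eta^+)'}(\mathbf{x})$, and the identity reduces to a direct comparison of conjugate partitions differing only in columns $a+1,\ldots,b$. Stage two extends to arbitrary $\eta$ satisfying the hypothesis, either by describing the joint eigenspace explicitly (expected to be spanned by $E_\tau$'s for $\tau$ obtained from $\eta$ by coherent relabellings of $a$'s and $b$'s) and matching the coefficient of the dominance-maximal monomial, or by induction using Hecke-type intertwiners that permute positions of classes outside $\{a,b\}$ and act compatibly on both sides thanks to the commutation property above. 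The delicate point is that such intertwiners may become singular when consecutive eigenvalues collide at $q=1$, so care is required in choosing the order of moves.
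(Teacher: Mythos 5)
Your primary strategy cannot close, and you essentially say so yourself: at $q=1$ the joint $Y_i$-eigenspaces are not one-dimensional (cf.\ \cref{rem:eigenvalues}), and the collision is not incidental here---$\eta$ and $\hat\eta$ have different contents, which is precisely the situation in which distinct $E_\tau$'s share an eigenvalue tuple at $q=1$. Showing that $e_{k_2}^{\,b-a}E_\eta$ and $e_{k_1}^{\,b-a}E_{\hat\eta}$ are joint eigenfunctions of the same degree with the same eigenvalues therefore only places their difference in a subspace of dimension possibly greater than one, and matching the coefficient of a single dominance-maximal monomial imposes one linear condition, which does not force that difference to vanish. Your Stage two, first alternative, would need an explicit basis of the relevant joint eigenspace and a coefficient comparison against every basis element; you do not supply this, and it is not clear it can be carried out without already knowing the theorem. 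So the eigenfunction skeleton, as proposed, is a gap rather than a proof.

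The workable branch is your Stage two, second alternative, and it is in substance the paper's argument---but the decisive computation is absent from your sketch. The paper reduces to the weakly increasing case via the shape-permuting operator of \cref{prop:shape}: if $\nu_i>\nu_{i+1}$ then $E_{s_i\nu}=\bigl(T_i+\tfrac{1-t}{1-q^{\nu_{i+1}-\nu_i}t^{r_i(\nu)}}\bigr)E_\nu$. The adjacency hypothesis guarantees both that $\eta$ and $\hat\eta$ have descents in the same positions and that $r_i(\eta)=r_i(\hat\eta)$; hence at $q=1$ the \emph{same} operator $T_i+\tfrac{1-t}{1-t^{r}}$ carries $E_\eta\mapsto E_{s_i\eta}$ and $E_{\hat\eta}\mapsto E_{s_i\hat\eta}$, and since $T_i$ commutes with multiplication by symmetric polynomials (\cref{rem:operator}), the claimed identity propagates along any sorting sequence once it holds in the weakly increasing case. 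This also disposes of your worry about intertwiners degenerating when eigenvalues collide: the coefficient in this form of the operator is governed by the arm statistic $r_i$, is identical for $\eta$ and $\hat\eta$, and involves no ratio of $q=1$ eigenvalues. Your Stage one (the weakly increasing case via Alexandersson--Sawhney, i.e.\ $E_\eta(\mathbf{x};1,t)=e_{\eta'}(\mathbf{x})$ as in \cref{cor:AS}, followed by a direct comparison of conjugates) coincides with the paper's base case and is fine; the missing ingredient is the identification $r_i(\eta)=r_i(\hat\eta)$ that makes the reduction to that base case legitimate.
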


\begin{eg}
At $q=1$, we have 
\begin{align*}
	\frac{E_{2515}}{E_{2212}} &= \left(\frac{e_2}{e_0} \right)^{3}, \qquad 
	&\frac{E_{2515}}{E_{5515}} &= \left(\frac{e_2}{e_3} \right)^{3}, \\
	\frac{E_{201}}{E_{200}} &= \left(\frac{e_2}{e_1} \right)^{1}, \qquad
	&\frac{E_{3311220}}{E_{3311110}} &= \left(\frac{e_4}{e_2} \right)^{1}.
	\end{align*}
\end{eg}

\begin{rem}
Some results from 
	\cite{alexandersson-sawhney-2019} 
	will be useful to us.  However, we have to be careful of conventions.
	Sage and \cite{corteel-mandelshtam-williams-2022} have the same conventions, but those conventions are different from \cite{alexandersson-sawhney-2019}; in particular,
the compositions indexing Macdonald polynomials are reversed in these references.
That is, the polynomial called 
$E_{(\eta_1,\dots,\eta_n)}$ 
in \cite{alexandersson-sawhney-2019} is the same 
	as the polynomial called $E_{(\eta_n,\dots,\eta_1)}$ in Sage and \cite{corteel-mandelshtam-williams-2022}.
\end{rem}

\begin{defn}\label{def:ws}
The \emph{weak standardization} of a composition
$\eta$, denoted $\tilde{\eta}$, is the lexicographically
	smallest composition with the property that if $\eta_i < \eta_j$, then $\tilde{\eta}_i < \tilde{\eta}_j$ for all pairs $i,j$.  {The \emph{conjugate} $\eta'$ of a
	composition $(\eta_1,\dots,\eta_n)$ is obtained by 
	drawing a left-justified diagram consisting of rows of lengths
	$(\eta_1,\dots,\eta_n)$, then reading the columns from left to right
	and recording the number of boxes in each column.}
\end{defn}

For example, if 
$\eta = (2, 5, 1, 4, 4, 5, 4)$, then $\tilde{\eta} = (1, 3, 0, 2, 2, 3, 2)$, and 
$\eta' = (7,6,5,5,2).$
The theorem below 
is due to \cite{alexandersson-sawhney-2019}, but 
we have phrased it using the conventions of Sage and \cite{corteel-mandelshtam-williams-2022}.

\begin{thm}\cite[Equation (2) and Theorem 18]{alexandersson-sawhney-2019}
\label{cor:AS0}
	Choose a basement $\sigma\in S_n$ and composition $\eta=(\eta_1,\dots,\eta_n)$, and let $\tau=\tilde{\eta}$ denote
the weak standardization of $\eta$. Then
\begin{equation}\label{eq:useful}
E_{\eta}^{\sigma}(\mathbf{x}; 1, t) = 
	\left( \frac{e_{\eta'}(\mathbf{x})}{e_{\tau'}(\mathbf{x})} \right)
	E_{{\tau}}^{\sigma}(\mathbf{x}; 1, t),
\end{equation}
where $\eta'$ denotes the composition which is conjugate to $\eta$, and 
	$e_{\eta'}(\mathbf{x})/e_{\tau'}(\mathbf{x})$ 
	is an elementary symmetric polynomial independent of $t$.
And if the composition $\eta$ has weakly increasing parts, then 
$$E_{\eta}^{\sigma}(\mathbf{x}; 1, t) = e_{\eta'}(\mathbf{x}).$$  
\end{thm}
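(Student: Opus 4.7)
The plan is to prove the theorem in two stages, following the strategy of Alexandersson--Sawhney. First I establish the base case: if $\eta$ has weakly increasing parts, then $E_\eta^\sigma(\mathbf{x}; 1, t) = e_{\eta'}(\mathbf{x})$ for every basement $\sigma$. Second I reduce a general composition to its weak standardization $\tilde{\eta}$ via an inductive gap-contraction argument, and collect the resulting elementary symmetric factors into the stated ratio $e_{\eta'}/e_{\tilde{\eta}'}$.

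For the base case, the natural tool is the combinatorial formula of Ferreira (of Haglund--Haiman--Loehr type) that expresses $E_\eta^\sigma$ as a weighted sum over non-attacking fillings of the augmented diagram associated to $\eta$ and basement $\sigma$, with weights tracking a monomial and statistics in $q$ and $t$. When $\eta$ is weakly increasing, the diagram has a staircase shape and at $q=1$ the major-index statistic drops out. A column-by-column analysis should exhibit an explicit bijection (or sign-reversing involution) matching the surviving fillings to the monomial expansion of $e_{\eta'}(\mathbf{x})$, with all $t$-dependent contributions cancelling to yield an answer independent of both $\sigma$ and $t$. An alternative route avoids combinatorics: start with \eqref{Pla-q=1}, namely $P_\lambda(\mathbf{x}; 1, t) = e_{\lambda'}(\mathbf{x})$, and use a Hecke symmetrization identity expressing $P_\lambda$ as a $T_i$-symmetrization of the permuted basement Macdonald polynomial for the weakly increasing permutation of $\lambda$; combined with the $T_i$-invariance of the symmetric polynomial $e_{\lambda'}$, this forces the base case.

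For the inductive step, fix a composition $\eta$ with a value gap: two consecutive present values $u < v$ with no integer in the open interval $(u, v)$ occurring as a part. Let $\eta^-$ be obtained from $\eta$ by replacing every part equal to $v$ with $u+1$ and, recursively, shifting all larger values down so as to eliminate this gap. Then $\eta$ and $\eta^-$ share the same relative order pattern, so by formula \eqref{eq:eigenvalue} their Cherednik--Dunkl eigenvalues coincide at $q=1$: $y_i(\eta; 1, t) = y_i(\eta^-; 1, t)$ for all $i$. The key identity is
\[
E_\eta^\sigma(\mathbf{x}; 1, t) \,=\, \frac{e_{\eta'}(\mathbf{x})}{e_{(\eta^-)'}(\mathbf{x})} \, E_{\eta^-}^\sigma(\mathbf{x}; 1, t),
\]
whose right-hand side is, by Remark \ref{rem:operator}, a joint eigenfunction of the $Y_i$'s with the same eigenvalues as the left-hand side, since multiplication by a symmetric polynomial commutes with every $T_i$ and hence every $Y_i$. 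Iterating these contractions carries $\eta$ to $\tilde{\eta}$, and the product of the single-step ratios telescopes to $e_{\eta'}/e_{\tilde{\eta}'}$, giving the general formula. Combining with the base case applied to $\tilde{\eta}$ yields the full statement.

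The main obstacle is the uniqueness step in the inductive argument. Remark \ref{rem:eigenvalues} warns that distinct compositions can share Cherednik--Dunkl eigenvalues at $q=1$, so a naive uniqueness appeal fails; however, uniqueness is restored when one restricts to compositions of a fixed content, as the same remark notes. The gap contraction $\eta \mapsto \eta^-$ changes the content in a controlled way, exactly tracked by the symmetric factor $e_{\eta'}/e_{(\eta^-)'}$. After multiplying the right-hand side by this factor, its monomial expansion has leading term $\mathbf{x}^\eta$ matching that of $E_\eta^\sigma$, and normalizing on this dominant monomial using the triangularity from Definition \ref{def:E} pins down the scalar and completes the identification.
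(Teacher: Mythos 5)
First, a point of comparison: the paper does not prove \cref{cor:AS0} at all --- it is imported wholesale from \cite{alexandersson-sawhney-2019} (their Equation (2) and Theorem 18), so you are attempting to reprove a cited result rather than paralleling an argument in the text. Your attempt, however, contains genuine gaps. The central one is in the inductive step: for a general basement $\sigma$, the permuted-basement polynomial $E^\sigma_\eta$ is \emph{not} a joint eigenfunction of the Cherednik--Dunkl operators $Y_i$; only the nonsymmetric Macdonald polynomials (the case $\sigma=w_0$ in this paper's conventions) are. So the assertion that both sides of your key identity are $Y_i$-eigenfunctions fails at the outset for the general $\sigma$ the theorem requires. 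Even restricting to the standard basement, the uniqueness step does not close. At $q=1$ the eigenvalues \eqref{eq:eigenvalue} depend only on the relative order pattern of the composition, so for instance $E_{(4,1,0)}$ and $E_{(3,2,0)}$ are distinct, homogeneous of the same degree, and share all $Y_i$-eigenvalues at $q=1$. Matching the coefficient of the dominant monomial $\mathbf{x}^\eta$ therefore does not exclude contributions from other $E_\zeta$ with $\zeta<\eta$ in the same eigenspace. Compare the paper's own proof of \cref{prop:Gmu}: there the eigenvalue argument only bites after the candidate is first shown to lie in the span of $\{E_\zeta:\zeta\in S_\lambda\}$ for a \emph{fixed} partition $\lambda$ (via the triangular change of basis with the $F_\zeta$), since distinctness of $q=1$ eigenvalues holds only within such an orbit (\cref{rem:eigenvalues}). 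Your candidate $\bigl(e_{\eta'}/e_{(\eta^-)'}\bigr)E^\sigma_{\eta^-}$ has different content from $\eta^-$ by construction, and you supply no analogous containment.

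The base case is also not established. Route (b) is a non sequitur: the identity $P_\lambda(\mathbf{x};1,t)=e_{\lambda'}(\mathbf{x})$ identifies the \emph{sum} over the orbit with $e_{\lambda'}$, not each individual term --- the polynomials $F_\eta(\mathbf{x};1,t)$ for $\eta\in S_{(2,1,0)}$ in \cref{eg:210 probs} are pairwise distinct and none of them equals $e_{(2,1)}$, so no symmetrization argument can force every permuted-basement polynomial in an orbit to coincide with the elementary symmetric polynomial. Route (a) defers the entire content of the base case to a bijection on fillings that ``should'' exist; exhibiting that cancellation at $q=1$ is precisely the substantive work carried out in \cite{alexandersson-sawhney-2019}, and without it the proof has no foundation to induct from.
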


Note that if $\mu$ is a partition and $\eta \in S_{\mu}$,
then $\eta' \in S_{\mu'}$, that is, both $\eta'$ and $\mu'$
have the same set of parts.  
Using \cref{cor:AS0} and \cref{prop:CMW}, we now obtain the following.

\begin{cor}
\label{cor:AS}
Let $\mu=(\mu_1,\dots,\mu_n)$ be a 
partition, and let $\eta=(\eta_1,\dots,\eta_n)\in S_{\mu}$. Then we have
\[
\frac{\ASEP_\eta(\mathbf{x}; 1, t)}{\ASEP_{\tilde{\eta}}(\mathbf{x}; 1, t)}
	= h_{\mu}(\mathbf{x}),
\]
	where $h_{\mu}(\mathbf{x}):=e_{\mu'}(\mathbf{x})/e_{\tilde{\mu}'}(\mathbf{x})$ is a (symmetric) polynomial in $\mathbf{x} = (x_1, \dots, x_n)$ which is independent of $\eta$ and of $t$.

And if the composition $\eta$ has weakly increasing parts, then 
the nonsymmetric Macdonald polynomial $E_{\eta}$ satisfies
$$E_{\eta}(\mathbf{x}; 1, t) = e_{\eta'}(\mathbf{x}).$$  
\end{cor}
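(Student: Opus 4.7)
My plan is to derive both claims from \cref{prop:CMW} and \cref{cor:AS0}. For the first claim, \cref{prop:CMW} gives $F_\eta = E_{\inc(\eta)}^\sigma$ and $F_{\tilde\eta} = E_{\inc(\tilde\eta)}^\sigma$ for the \emph{same} permutation $\sigma$, because weak standardization preserves the pattern of equalities and strict inequalities among the entries of $\eta$. Since $\eta \in S_\mu$, we have $\inc(\eta) = \rev(\mu)$ (independent of $\eta$) and $\inc(\tilde\eta) = \rev(\tilde\mu)$. I will also use the easy combinatorial identities $\rev(\mu)' = \mu'$ (the conjugate depends only on the multiset of parts) and $\widetilde{\rev(\mu)} = \rev(\tilde\mu)$ (weak standardization commutes with reversal), so in particular $\widetilde{\rev(\mu)}' = \tilde\mu'$. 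Applying the first identity of \cref{cor:AS0} to $\rev(\mu)$ with basement $\sigma$ then yields
\[
F_\eta(\mathbf{x}; 1, t)
= E_{\rev(\mu)}^\sigma(\mathbf{x}; 1, t)
= \frac{e_{\mu'}(\mathbf{x})}{e_{\tilde\mu'}(\mathbf{x})} \, E_{\rev(\tilde\mu)}^\sigma(\mathbf{x}; 1, t)
= h_\mu(\mathbf{x}) \, F_{\tilde\eta}(\mathbf{x}; 1, t),
\]
which establishes the ratio in the first claim.

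For the second claim, I will combine the identification $E_\eta = E_{\rev(\eta)}^{w_0}$ (recorded just before \cref{prop:CMW}) with the second statement of \cref{cor:AS0}. When $\eta$ has weakly increasing parts, $\rev(\eta)$ is a partition, and $\rev(\eta)' = \eta'$. Applying \cref{cor:AS0} (with its weakly monotone hypothesis interpreted in the conventions at hand) to $\rev(\eta)$ with basement $w_0$ will give $E_{\rev(\eta)}^{w_0}(\mathbf{x}; 1, t) = e_{\eta'}(\mathbf{x})$, and hence $E_\eta(\mathbf{x}; 1, t) = e_{\eta'}(\mathbf{x})$.

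The main subtlety will be tracking the conventions between \cite{alexandersson-sawhney-2019} (in which compositions are read in reverse order) and the Sage/CMW conventions used throughout this paper, in order to confirm that the weakly monotone hypothesis of \cref{cor:AS0} applies to the composition one feeds into it in each of the two arguments, and to verify the two elementary combinatorial identities above.
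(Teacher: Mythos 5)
Your proof is correct and is essentially the paper's own derivation (the paper gives no details beyond citing \cref{cor:AS0} and \cref{prop:CMW}): the observations that the basement $\sigma$ of \cref{prop:CMW} is the same for $\eta$ and $\tilde{\eta}$, and that $\rev(\mu)'=\mu'$ and $\widetilde{\rev(\mu)}=\rev(\tilde{\mu})$, are exactly the bookkeeping needed for the first claim. Your caution on the second claim is justified: since $E^{w_0}_{(0,1,2)}=E_{(2,1,0)}(\mathbf{x};1,t)=x_1^2x_2+\tfrac{1}{1+t}x_1x_2x_3\neq e_{(2,1)}=e_{(0,1,2)'}$, the final sentence of \cref{cor:AS0} cannot hold literally for weakly increasing compositions with arbitrary basement, and must be read (after the convention reversal the paper warns about) as applying to the weakly decreasing composition $\rev(\eta)$ --- which is precisely how you apply it.
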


\begin{prop}\cite[Proposition 16]{alexandersson-sawhney-2019}\label{prop:AS}
If $\eta$ is a composition,
then 
	\begin{equation*}
	\frac{E_{\eta}(\mathbf{x}; 1, t)}
	{E_{\tilde{\eta}}(\mathbf{x}; 1, t)} 
	\end{equation*}
	 is an elementary symmetric polynomial.
\end{prop}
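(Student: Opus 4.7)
The plan is to deduce this proposition directly from the more general permuted-basement identity \cref{cor:AS0}, by specializing the basement to the longest permutation $w_0 = (n, n-1, \ldots, 1)$. Recall from the discussion of permuted basement Macdonald polynomials preceding \cref{prop:CMW} that every nonsymmetric Macdonald polynomial $E_{\alpha}(\mathbf{x}; q, t)$ can be written as the permuted basement Macdonald polynomial $E_{\rev(\alpha)}^{w_0}(\mathbf{x}; q, t)$. Applying \cref{cor:AS0} with $\sigma = w_0$ to the composition $\rev(\eta)$ therefore yields
\[
E_{\eta}(\mathbf{x}; 1, t) \;=\; E_{\rev(\eta)}^{w_0}(\mathbf{x}; 1, t) \;=\; \frac{e_{\rev(\eta)'}(\mathbf{x})}{e_{\widetilde{\rev(\eta)}'}(\mathbf{x})} \, E_{\widetilde{\rev(\eta)}}^{w_0}(\mathbf{x}; 1, t),
\]
with the prefactor being an elementary symmetric polynomial independent of $t$.

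To conclude, it suffices to match the right-hand side to $E_{\tilde\eta}(\mathbf{x}; 1, t)$ times a prefactor involving $\eta'$ and $\tilde\eta'$. For this I would record two easy identities: first, that weak standardization commutes with reversal, $\widetilde{\rev(\eta)} = \rev(\tilde\eta)$, which is immediate from \cref{def:ws} since $\tilde\eta_i$ depends only on the rank of $\eta_i$ among the distinct values of $\eta$; and second, that the conjugate $\alpha'$ of any composition $\alpha$ depends only on the multiset of its parts (the $k$-th part of $\alpha'$ equals $\#\{i : \alpha_i \geq k\}$, and by construction $\alpha'$ is written as a partition), so $\rev(\alpha)' = \alpha'$ for any composition $\alpha$. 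Combining these, $E_{\widetilde{\rev(\eta)}}^{w_0} = E_{\rev(\tilde\eta)}^{w_0} = E_{\tilde\eta}$, and the prefactor simplifies to $e_{\eta'}(\mathbf{x}) / e_{\tilde\eta'}(\mathbf{x})$. One obtains
\[
\frac{E_{\eta}(\mathbf{x}; 1, t)}{E_{\tilde\eta}(\mathbf{x}; 1, t)} \;=\; \frac{e_{\eta'}(\mathbf{x})}{e_{\tilde\eta'}(\mathbf{x})},
\]
which by \cref{cor:AS0} is an elementary symmetric polynomial.

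There is essentially no obstacle here: the substantive content of the proposition is already packaged inside \cref{cor:AS0}, and what remains is a change of conventions (reversal of the indexing composition and the choice of basement $w_0$) together with the two elementary observations above. The only thing worth being slightly careful about is which direction \emph{reversal} acts in the identification $E_\alpha = E_{\rev(\alpha)}^{w_0}$, but since the weak standardization and conjugation operations are both invariant under relabeling of indices, this cosmetic subtlety does not affect the final ratio.
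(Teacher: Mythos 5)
Your argument is correct, but note that the paper does not actually prove \cref{prop:AS} at all: it imports it verbatim as \cite[Proposition~16]{alexandersson-sawhney-2019}, so there is no in-paper proof to match. What you have done instead is derive it as the $\sigma=w_0$ specialization of the \emph{other} imported result, \cref{cor:AS0}, via the identification $E_\alpha = E_{\rev(\alpha)}^{w_0}$. The two auxiliary facts you rely on both check out: weak standardization is the rank function on the values of $\eta$ (as the paper's example $\eta=(2,5,1,4,4,5,4)\mapsto(1,3,0,2,2,3,2)$ confirms), hence is position-independent and satisfies $\widetilde{\rev(\eta)}=\rev(\tilde\eta)$; and since $(\alpha')_k=\#\{i:\alpha_i\ge k\}$ depends only on the multiset of parts, $\rev(\alpha)'=\alpha'$. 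Your maneuver is in fact the exact analogue of how the paper itself obtains \cref{cor:AS} by combining \cref{cor:AS0} with \cref{prop:CMW} (there with the increasing-sort basement in place of $w_0$), so it fits the paper's toolkit naturally, and it buys two small things: it makes the ratio explicit as $e_{\eta'}/e_{\tilde\eta'}$ rather than merely ``some elementary symmetric polynomial,'' and it reduces the number of independently imported black boxes. The one caveat worth recording is provenance rather than logic: in \cite{alexandersson-sawhney-2019} the basement-general statement (their Theorem~18) is the \emph{generalization} of their Proposition~16, so you are recovering the special case from the general case, not supplying an independent proof of the underlying fact.
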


Before proving \cref{thm:main}, we recall the \emph{shape permuting
operator} from \cite[Equation (17)]{HHL3}.

\begin{prop} \cite{HHL3}
\label{prop:shape}
Let $\nu$ be a composition, and suppose $\nu_i>\nu_{i+1}$. 
Write 
\[
r_i(\nu)=
\#\{j<i \mid \nu_{i+1}<\nu_{j}\leq \nu_i\}
+
\#\{j>i \mid \nu_{i+1}\leq \nu_j < \nu_i\}.
\]
Then
\begin{equation}
\label{shape-permute}
E_{s_i \nu}(\mathbf{x};q,t) = 
\left(T_i + \frac{1-t}{1-q^{\nu_{i+1}-\nu_i}t^{r_i(\nu)}}\right)
E_{\nu}(\mathbf{x};q,t).
\end{equation}
\end{prop}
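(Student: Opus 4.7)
The plan is to use the classical Cherednik intertwiner argument, combined with the characterization of $E_\nu$ as a joint eigenfunction of the commuting operators $Y_1,\dots,Y_n$ with the prescribed eigenvalue tuple.

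First I would derive the Lusztig-type relations
\[
T_i Y_{i+1} = Y_i T_i, \qquad T_i Y_i = Y_{i+1} T_i - (t-1) Y_{i+1}, \qquad T_i Y_j = Y_j T_i \ (j\neq i,i+1).
\]
These follow from the braid and quadratic relations \eqref{hecke-gens} for the $T_k$, the definition of $\omega$ in \eqref{omega-def}, and the explicit expression of $Y_i$ as a word in the $T_k^{\pm 1}$ and $\omega$; the computation reduces to pushing $\omega$ past one $T_i$ at a time, together with the identity $T_i^{-1} = t^{-1}T_i - (1 - t^{-1})$ coming from the Hecke relation. I would then introduce the \emph{intertwiner}
\[
\sigma_i \;:=\; T_i + \frac{(t-1)\,Y_{i+1}}{Y_i - Y_{i+1}},
\]
viewed as an operator on $\mathbb{Q}(q,t)[x_1,\dots,x_n]$ localized in the $Y$'s. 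Using the three relations above together with the fact that any rational function of $Y_i, Y_{i+1}$ commutes with every $Y_k$, a short calculation shows that $\sigma_i Y_i = Y_{i+1}\sigma_i$, $\sigma_i Y_{i+1} = Y_i\sigma_i$, and $\sigma_i Y_j = Y_j\sigma_i$ for $j\neq i, i+1$.

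Next I apply $\sigma_i$ to $E_\nu$. Since $E_\nu$ is a common $Y$-eigenfunction with eigenvalues $y_j(\nu)$ given by \eqref{eq:eigenvalue}, the previous step shows that $\sigma_i E_\nu$ is again a common $Y$-eigenfunction, whose eigenvalue tuple is obtained from $(y_1(\nu),\dots,y_n(\nu))$ by swapping the $i$-th and $(i+1)$-th entries. A direct check using \eqref{eq:eigenvalue} and the hypothesis $\nu_i > \nu_{i+1}$ shows that this swapped tuple agrees with $(y_1(s_i\nu),\dots,y_n(s_i\nu))$. By the uniqueness clause of \cref{def:E} (which is a consequence of the monomial-triangularity (1) together with the fact that the $Y$-eigenvalues separate monic polynomials), $\sigma_i E_\nu$ must be a scalar multiple of $E_{s_i\nu}$. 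Finally, since $E_\nu$ is a $Y$-eigenfunction, the operator $(t-1)Y_{i+1}/(Y_i-Y_{i+1})$ acts on $E_\nu$ as the scalar $(t-1)y_{i+1}(\nu)/(y_i(\nu)-y_{i+1}(\nu))$; substituting the formula \eqref{eq:eigenvalue} for the $y_j$'s and simplifying produces the factor $(1-t)/(1-q^{\nu_{i+1}-\nu_i}t^{r_i(\nu)})$ in \eqref{shape-permute}, and so $\sigma_i E_\nu$ equals a scalar multiple of the right-hand side of the claimed identity applied to $E_\nu$.

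To pin down that scalar as exactly $1$, I would compare leading monomials. Using the monomial-triangularity in \cref{def:E}(1), the coefficient of $\mathbf{x}^\nu$ in $E_\nu$ is $1$; inspecting the formula $T_i f = tf - \frac{tx_i-x_{i+1}}{x_i-x_{i+1}}(f - s_i f)$ from \eqref{def Ti} shows that the image of $\mathbf{x}^\nu$ under $T_i$ contributes $\mathbf{x}^{s_i\nu}$ with coefficient $1$ (with the extra correction terms supported strictly below $s_i\nu$ in dominance), and the added scalar operator contributes nothing to this leading monomial. On the other side, $E_{s_i\nu}$ is also monic in $\mathbf{x}^{s_i\nu}$, so the two polynomials must agree. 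The main obstacle is the bookkeeping at the very end: rewriting $y_{i+1}(\nu)/y_i(\nu)$ in the form $q^{\nu_{i+1}-\nu_i} t^{r_i(\nu)}$ requires carefully splitting the index sets in \eqref{eq:eigenvalue} by whether $j<i$, $j=i+1$, or $j>i+1$, and isolating the contribution of the position $j=i+1$ (which sits inside several of the strict/weak inequality counts because $\nu_i>\nu_{i+1}$); getting the exponent of $t$ to match exactly the definition of $r_i(\nu)$ in the statement is the one place where a sign or off-by-one slip is easy to make.
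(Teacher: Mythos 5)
The paper offers no proof of \cref{prop:shape} --- it is imported verbatim from \cite{HHL3} --- so there is nothing internal to compare your argument against. Your intertwiner strategy is the standard proof of this identity and is the right idea, but three steps do not hold up as written. First, your two cross relations are mutually inconsistent: adding them gives $T_i(Y_i+Y_{i+1})=(Y_i+Y_{i+1})T_i-(t-1)Y_{i+1}$, whereas $T_i$ must commute with $Y_i+Y_{i+1}$. With this paper's definition of $Y_i$ one checks $T_iY_i=tY_{i+1}T_i^{-1}$, whence the correct pair is $T_iY_i=Y_{i+1}T_i-(t-1)Y_{i+1}$ and $T_iY_{i+1}=Y_iT_i+(t-1)Y_{i+1}$; with your version of the second of these displayed relations (namely $T_iY_{i+1}=Y_iT_i$) the identity $\sigma_iY_{i+1}=Y_i\sigma_i$ fails by exactly $-(t-1)Y_{i+1}$, so the ``short calculation'' you defer does not close. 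Second, the normalization argument is incomplete: to get the scalar equal to $1$ you must control the coefficient of $\mathbf{x}^{s_i\nu}$ in all of $(T_i+c)E_\nu$, not only in $T_i\mathbf{x}^\nu$. You need both that $\mathbf{x}^{s_i\nu}$ does not occur in $E_\nu$ (so $cE_\nu$ contributes nothing) and that $T_i$ applied to the lower-order monomials of $E_\nu$ cannot create $\mathbf{x}^{s_i\nu}$. Both facts are true, but the first is not what \cref{def:E}(1) literally supplies: in the order of \cref{def:partialorder} one has $s_i\nu<\nu$, so that condition would \emph{allow} $\mathbf{x}^{s_i\nu}$ in $E_\nu$. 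The triangularity you actually need (and which \cref{eg:nsym-macd} exhibits, e.g.\ $E_{(1,0,2)}$ contains $\mathbf{x}^{(1,2,0)}$ but $E_{(1,2,0)}$ does not contain $\mathbf{x}^{(1,0,2)}$) is that within the orbit of $\nu^+$ the support of $E_\nu$ is $\{\zeta:\zeta\geq\nu\}$; and the second fact needs a short argument comparing sorted exponent vectors.

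Third, the spot you flag as delicate is where the real discrepancy lives. Carrying out the eigenvalue computation from \eqref{eq:eigenvalue} under $\nu_i>\nu_{i+1}$ gives $y_i(\nu)/y_{i+1}(\nu)=q^{\nu_i-\nu_{i+1}}\,t^{\,r_i(\nu)}$, so the scalar by which $(t-1)Y_{i+1}/(Y_i-Y_{i+1})$ acts on $E_\nu$ is $\frac{1-t}{1-q^{\nu_i-\nu_{i+1}}t^{\,r_i(\nu)}}$, with $q$-exponent $\nu_i-\nu_{i+1}>0$ rather than $\nu_{i+1}-\nu_i$ as in \eqref{shape-permute}. This is not a slip in your method: taking $\nu=(1,2,0)$ and $i=2$, so $r_2(\nu)=2$, and comparing the coefficient of $x_1x_2^2$ in $E_{(1,0,2)}$ with that in $(T_2+c)E_{(1,2,0)}$ using the tables of \cref{eg:nsym-macd} forces $c=\frac{1-t}{1-q^{2}t^{2}}$, not $\frac{1-t}{1-q^{-2}t^{2}}$; the paper's own parenthetical identification of the exponent with $1+\leg(u)\geq 1$ points the same way, so the printed statement evidently has the sign of the $q$-exponent reversed. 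Your writeup simply asserts that the computation ``produces'' the printed factor without performing it; performed honestly, it produces the other sign, and that needs to be confronted rather than waved through.
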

(In the terminology of \cite{HHL3}, 
the exponent $\nu_{i+1}-\nu_i$ of $q$ 
appearing in \cref{shape-permute}
is $1+\leg(u)$, 
and the exponent $r_i(\nu)$ of $t$ is $\arm(u)$,
where $u$ is the box $(i, \nu_{i+1}+1)$ in the
column diagram of the composition $\nu$.)

\begin{proof}[Proof of \cref{thm:main}]
We start by reducing the proof of \cref{thm:main} to the 
case of a composition whose parts are weakly increasing.
Note that by our assumptions on $\eta$ and $\hat{\eta}$,
we will have $\eta_i>\eta_{i+1}$ if and only if 
$\hat{\eta}_i > \hat{\eta}_{i+1}$. In this case, 
it is not hard to see that $r_i(\eta)=r_i(\hat{\eta})$.
Thus if we set $q=1$, then the shape permuting operator
is the ``same'' for both $\eta$ and $\hat{\eta}$, that is, 
$$E_{s_i \eta}(\mathbf{x};1,t) = 
\left(T_i + \frac{1-t}{1-t^{r}}\right)
E_{\eta}(\mathbf{x};1,t)$$ and 
	$$E_{s_i \hat{\eta}}(\mathbf{x};1,t) = 
\left(T_i + \frac{1-t}{1-t^{r}}\right)
	E_{\hat{\eta}}(\mathbf{x};1,t),$$ 
	where $r=r_i(\eta)=r_i(\hat{\eta})$.
By applying the same sequence of 
	shape permuting operators at $q=1$ to $E_{\eta}(\mathbf{x};1,t)$ and 
$\left( 
\frac{e_{\#\{i  \vert  \eta_{i} \geq b\}}}
	{e_{\#\{i \vert  \hat{\eta}_{i} \geq b\}}} \right)^{b-a}
	E_{\hat{\eta}}(\mathbf{x};1,t)$,  and using \cref{rem:operator},
we can therefore 
reduce to the case 
 of a composition whose parts are weakly increasing.

We now suppose that $\eta$ has weakly increasing parts; 
likewise $\hat{\eta}$ has weakly increasing parts.
 We  now apply \cref{cor:AS}, which says that 
at $q=1$, 
$$\frac{E_{\eta}}{E_{\hat{\eta}}} = \frac{e_{\eta'}}{e_{\hat{\eta}'}}.$$
We have that $\frac{e_{\eta'}}{e_{\hat{\eta}'}} = 
\left( 
\frac{e_{\#\{i \vert  \eta_{i} \geq b\}}}
	{e_{\#\{i  \vert  \hat{\eta}_{i} \geq b\}}} \right)^{b-a},$
	so we are done.
\end{proof}

Now what we are really interested in is quantities likes 
$E_{201}/E_{100}$ or $E_{3311220}/E_{2211110}$.
The denominator is obtained in two stages from the numerator: by merging two adjacent classes $a<b$ appearing in increasing order as in \cref{thm:main},
and then applying weak standardization, as in 
	\cref{prop:AS}.  By applying \cref{thm:main} and \cref{prop:AS}, we obtain the following.

\begin{cor}\label{cor:main}
	Consider a composition $\eta=(\eta_1,\dots,\eta_n)$ such that its parts of sizes $a$ and $b$ (with $a<b$) occur in 
increasing order from left to right in $(\eta_1,\dots,\eta_n)$, and such that $\eta$ has no parts of size
	$h$ for any $a<h<b$.  
	Let $\hat{\eta}$ be the composition obtained from $\eta$ by changing all $a$'s to $b$, or changing 
	all $b$'s to $a$.  Let $\tilde{\hat{\eta}}$ be the composition obtained from $\hat{\eta}$ by applying
	the weak standardization (i.e. we find the smallest composition whose parts are in the same relative order
	as those of $\hat{\eta}$).  Then at $q=1$,
		$E_{\eta}/E_{\tilde{\hat{\eta}}}$ is a ratio of elementary symmetric polynomials. 
\end{cor}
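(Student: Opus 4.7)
The plan is to factor the ratio $E_{\eta}/E_{\tilde{\hat{\eta}}}$ as a telescoping product
$$\frac{E_{\eta}}{E_{\tilde{\hat{\eta}}}} \;=\; \frac{E_{\eta}}{E_{\hat{\eta}}} \cdot \frac{E_{\hat{\eta}}}{E_{\tilde{\hat{\eta}}}},$$
and handle each factor separately using the two results already established earlier in this subsection. The statement is essentially a packaging result, so the work has already been done.

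By construction, $\eta$ and $\hat{\eta}$ are exactly in the situation of \cref{thm:main}: the classes $a<b$ are adjacent and appear in left-to-right increasing order in $\eta$, and $\hat{\eta}$ is the composition produced from $\eta$ by swapping all $a$'s to $b$'s (or vice versa) as in that theorem. Applying \cref{thm:main} at $q=1$ therefore yields
$$\frac{E_{\eta}}{E_{\hat{\eta}}} \;=\; \left(\frac{e_{\#\{i \vert \eta_{i} \geq b\}}}{e_{\#\{i \vert \hat{\eta}_{i} \geq b\}}}\right)^{b-a},$$
which is manifestly a ratio of products of elementary symmetric polynomials. For the second factor, I would invoke \cref{prop:AS} with the composition $\hat{\eta}$ in place of $\eta$; since $\tilde{\hat{\eta}}$ is, by definition, the weak standardization of $\hat{\eta}$, this result says that $E_{\hat{\eta}}/E_{\tilde{\hat{\eta}}}$ at $q=1$ is itself a product of elementary symmetric polynomials. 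Concretely, \cref{cor:AS0} with trivial basement identifies it as $e_{\hat{\eta}'}/e_{\tilde{\hat{\eta}}'}$.

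Multiplying the two identities expresses $E_{\eta}/E_{\tilde{\hat{\eta}}}$ at $q=1$ as
$$\left(\frac{e_{\#\{i \vert \eta_{i} \geq b\}}}{e_{\#\{i \vert \hat{\eta}_{i} \geq b\}}}\right)^{b-a} \cdot \frac{e_{\hat{\eta}'}}{e_{\tilde{\hat{\eta}}'}},$$
which is a ratio of products of elementary symmetric polynomials, yielding the claim. There is no substantive obstacle here: the corollary is a bookkeeping statement that combines \cref{thm:main} and \cref{prop:AS} into the form that will be most convenient when it is used later, for instance in the proof of \cref{thm:multilrep-rej-statprob}.
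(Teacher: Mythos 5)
Your proof is correct and follows essentially the same route as the paper, which likewise obtains the corollary by factoring the ratio through $E_{\hat{\eta}}$ and applying \cref{thm:main} to the first factor and \cref{prop:AS} (the weak-standardization step) to the second. No gaps.
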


\subsection{Projections of ASEP polynomials at $q=1$}

In this section we explain some properties of ASEP polynomials that hold
true at $q=1$.  
These properties are algebraic analogues of the recolouring 
properties of the 
$t$-PushTASEP
 discussed in \cref{sec:projections}, in particular
\cref{prop:colouring-stationary}.

Recall the definition of weakly order-preserving
from \cref{def:orderpreserving}.
Throughout this section we will fix the following notation.

\begin{notation}\label{not:proj}
Let $\phi:\NN\to \NN$ be a weakly order-preserving function.
Let $\mu=(\mu_1,\dots,\mu_n)$ and 
$\lambda=(\lambda_1,\dots,\lambda_n)$ be partitions with $\phi(\lambda)=\mu$. 
For $\eta\in S_{\mu}$, let
	\begin{equation}\label{eq:G}
		G_\eta = G_\eta(\mathbf{x};t) := 
	\sum_{\substack{\zeta\in S_{\lambda} \\ \phi(\zeta)=\eta}}
		F_{\zeta}(\mathbf{x}; 1, t).
	\end{equation}
\end{notation}

\begin{eg}\label{eg:project}
Suppose that $\phi(0)=\phi(1)=1$ and $\phi(j)=j$ for $j\notin \{0,1\}$.
	Let $\lambda=(2,1,0)$ and $\mu = \phi(\lambda) = (2,1,1)$.
	Then  
	{
	\begin{align*}
		G_{(2,1,1)} & = F_{(2,1,0)}(\mathbf{x};1,t) + F_{(2,0,1)}(\mathbf{x}; 1, t) = x_1(x_1 x_2 + x_1 x_3 + x_2 x_3), \\
		G_{(1,2,1)} & = F_{(1,2,0)}(\mathbf{x}; 1, t) + F_{(0,2,1)}(\mathbf{x}; 1, t) = x_2(x_1 x_2 + x_1 x_3 + x_2 x_3), \\
		G_{(1,1,2)} & = F_{(1,0,2)}(\mathbf{x}; 1, t) + F_{(0,1,2)}(\mathbf{x}; 1, t) = x_3(x_1 x_2 + x_1 x_3 + x_2 x_3),
	\end{align*}
where the values of the ASEP polynomials are taken from \cref{eg:210 probs}.}
\end{eg}

\begin{prop}\label{lem:g-is-KZ} 
Use  \cref{not:proj}.
	Then the family $\{G_\eta \ \vert \ \eta\in S_{\mu}\}$ is a KZ family, i.e. 
it satisfies the relations of \cref{qKZ} at $q=1$.
\end{prop}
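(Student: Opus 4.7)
The plan is to verify the three defining relations of a KZ family (namely relations \eqref{firstproperty}, \eqref{secondproperty}, \eqref{thirdproperty} at $q=1$) for $G_\eta$, by using linearity of the operators $T_i$ and $\omega$, the fact that the $F_\zeta$ form a qKZ family, and the monotonicity of $\phi$ to reorganize the sum defining $G_\eta$.

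First I will handle \eqref{firstproperty}. Suppose $\eta_i > \eta_{i+1}$. For any $\zeta$ with $\phi(\zeta)=\eta$, monotonicity of $\phi$ forces $\zeta_i > \zeta_{i+1}$ (if $\zeta_i\leq\zeta_{i+1}$ then $\phi(\zeta_i)\leq\phi(\zeta_{i+1})$, contradiction). Hence $T_i F_\zeta = F_{s_i\zeta}$, and the map $\zeta\mapsto s_i\zeta$ is a bijection between $\{\zeta\in S_\lambda : \phi(\zeta)=\eta\}$ and $\{\zeta'\in S_\lambda : \phi(\zeta')=s_i\eta\}$. Summing gives $T_i G_\eta = G_{s_i\eta}$.

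Second I will handle \eqref{secondproperty}. Suppose $\eta_i=\eta_{i+1}$. Here I split the sum over $\zeta$ with $\phi(\zeta)=\eta$ into two parts. For those $\zeta$ with $\zeta_i=\zeta_{i+1}$, the qKZ relation gives $T_iF_\zeta = tF_\zeta$ directly. For those $\zeta$ with $\zeta_i\ne\zeta_{i+1}$ (but necessarily $\phi(\zeta_i)=\phi(\zeta_{i+1})$), I pair $\zeta$ with $s_i\zeta$; both satisfy $\phi=\eta$ since $s_i\eta=\eta$. Assuming WLOG $\zeta_i>\zeta_{i+1}$, I compute
\[
T_i F_\zeta + T_i F_{s_i\zeta} = F_{s_i\zeta} + (t-1)F_{s_i\zeta} + tF_\zeta = t(F_\zeta + F_{s_i\zeta}),
\]
using \eqref{firstproperty} and \eqref{fourthproperty} for the qKZ family $\{F_\zeta\}$. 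Summing over all $\zeta$ yields $T_i G_\eta = t G_\eta$.

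Third, for \eqref{thirdproperty} at $q=1$, I apply the corresponding relation for each $F_\zeta$ to get
\[
G_\eta(\mathbf{x};1,t) = \sum_{\phi(\zeta)=\eta} F_{\zeta_n,\zeta_1,\dots,\zeta_{n-1}}(x_n,x_1,\dots,x_{n-1};1,t),
\]
and then observe that $\zeta\mapsto(\zeta_n,\zeta_1,\dots,\zeta_{n-1})$ is a bijection from $\{\zeta\in S_\lambda : \phi(\zeta)=\eta\}$ to $\{\zeta'\in S_\lambda : \phi(\zeta')=(\eta_n,\eta_1,\dots,\eta_{n-1})\}$, because $\phi$ is applied componentwise and so commutes with the cyclic shift. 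This identifies the right-hand side with $G_{(\eta_n,\eta_1,\dots,\eta_{n-1})}(x_n,x_1,\dots,x_{n-1};1,t)$.

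I do not expect any real obstacle: the whole argument is a careful bookkeeping that leverages two facts — (i) $\phi$ being weakly order-preserving ensures that the comparison $\eta_i$ vs $\eta_{i+1}$ is essentially inherited by $\zeta_i$ vs $\zeta_{i+1}$ (strictly when $\eta_i>\eta_{i+1}$, and within a single $\phi$-fiber when $\eta_i=\eta_{i+1}$), and (ii) cyclic shift commutes with the coordinatewise action of $\phi$. The only point that requires any thought is the pairing in the second case, where one must notice that the $\zeta$ and $s_i\zeta$ contributions combine cleanly via the Hecke relation $T_i^2=(t-1)T_i+t$, which underpins \eqref{fourthproperty}.
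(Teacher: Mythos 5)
Your proof is correct and follows essentially the same route as the paper's: verify \eqref{firstproperty} via the bijection $\zeta\mapsto s_i\zeta$ between fibers (using that $\eta_i>\eta_{i+1}$ forces $\zeta_i>\zeta_{i+1}$), verify \eqref{secondproperty} by splitting the fiber into $\zeta_i=\zeta_{i+1}$ and $s_i$-paired terms and applying the Hecke relation, and verify \eqref{thirdproperty} via the commutation of $\phi$ with the cyclic shift. The only difference is that the paper first reduces to the case where $\phi$ merges a single pair of adjacent values, whereas you handle a general weakly order-preserving $\phi$ directly; your version is a slight streamlining but the mathematical content is identical.
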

\begin{proof}
It suffices to prove \cref{lem:g-is-KZ} when $\phi$ has a particular form:
namely, there is a natural number $\ell$ such that 
$\phi(\ell)=\phi(\ell+1) = \ell+1$, and $\phi(j)=j$ for $j\notin \{\ell,\ell+1\}$.
Suppose that the components of $\lambda$ contain $a$ instances of $\ell$ and 
$b$ instances of $\ell+1$.  Then each $\eta \in S_{\mu}$
contains $a+b$ instances of $\ell+1$, and 
$G_{\eta}$ is a sum of the ${a+b \choose a}$ polynomials $F_{\zeta}$,
where $\zeta$ is obtained from  $\eta$ by changing $a$ of the 
$\ell+1$'s in $\eta$ to $\ell$.

Suppose that $1\leq i \leq n-1$ is a position such that 
at most one of $\eta_i$ and $\eta_{i+1}$ equals $\ell+1$.
Then each $\zeta$ occurring in the right-hand-side of 
\eqref{eq:G} 
	has the property that $\zeta_i$ and $\zeta_{i+1}$
have the same relative order as $\eta_i$ and $\eta_{i+1}$.
Without loss of generality say  that $\eta_i>\eta_{i+1}$.
	(The arguments in the other cases are similar.)
	Then we have that \begin{align*}
		T_i G_{\eta} & =
	\sum_{\substack{\zeta\in S_{\lambda} \\ \phi(\zeta)=\eta}}
		T_i (F_{\zeta}(\mathbf{x}; 1, t)) \\
     &=
	\sum_{\substack{\zeta\in S_{\lambda} \\ \phi(\zeta)=\eta}}
		 F_{s_i(\zeta)}(\mathbf{x}; 1, t) \\
     &= G_{s_i(\eta)},
	\end{align*}
	as desired, 
where we used the fact
(cf \cref{def:ASEPpolynomials}) that the polynomials 
$F_{\zeta}(\mathbf{x}; q, t)$ are themselves a qKZ family.

Now suppose that $\eta_i = \eta_{i+1} = \ell+1$.
Then the $\zeta$ appearing on the right-hand side of 
\eqref{eq:G} satisfy either 
$\zeta_i=\zeta_{i+1}$, or 
$\zeta_i = \ell+1$ and $\zeta_{i+1} = \ell$,
or 
$\zeta_i = \ell$ and $\zeta_{i+1} = \ell+1$,
so we will divide up the sum accordingly.
In what follows, we will abbreviate 
$F_{\zeta}(\mathbf{x}; 1, t)$ by 
$F_{\zeta}$, and omit the 
conditions 
that $\zeta\in S_{\lambda}$ and $\phi(\zeta)=\eta$ in all 
sums below.
So we get 
 \begin{align*}
	T_i G_{\eta} 
	 &=	 \sum_{\substack{\zeta_i=\zeta_{i+1}}}
		T_i (F_{\zeta}) +
	 \sum_{\substack{\zeta_i=\ell+1, \zeta_{i+1} = \ell}}
		T_i (F_{\zeta})  + 
	 \sum_{\substack{\zeta_i=\ell, \zeta_{i+1} = \ell+1}}
		T_i (F_{\zeta}).\\
	 &=	 \sum_{\substack{\zeta_i=\zeta_{i+1}}}
		 t F_{\zeta} +
	 \sum_{\substack{\zeta_i=\ell, \zeta_{i+1} = \ell+1}}
		F_{\zeta}  + 
	 \sum_{\substack{\zeta_i=\ell, \zeta_{i+1} = \ell+1}}
		\left( (t-1) F_{\zeta} + tF_{s_i \zeta}\right).\\
	 &=	 \sum_{\substack{\zeta_i=\zeta_{i+1}}}
		 t F_{\zeta} +
	 \sum_{\substack{\zeta_i=\ell, \zeta_{i+1} = \ell+1}}
		\left(t F_{\zeta} + tF_{s_i \zeta}\right).\\
     &= t G_{\eta},
\end{align*}
as desired.

Finally since the polynomials $F_{\zeta}$ satisfy \eqref{thirdproperty},
at $q=1$ we get 
$$F_{\zeta}(\mathbf{x}; 1,t)
	       =F_{(\zeta_n,\zeta_1,\dots,\zeta_{n-1})}(x_n,x_1,\dots,x_{n-1}; 1,t).$$
But then we have
\begin{align*}
	G_\eta(\mathbf{x};t) 
	&=	\sum_{\substack{\zeta\in S_{\lambda} \\ \phi(\zeta)=\eta}}
	F_{\zeta}(\mathbf{x}; 1, t) \\
	&=	\sum_{\substack{\zeta\in S_{\lambda} \\ \phi(\zeta)=\eta}}
	F_{(\zeta_n, \zeta_1,\dots,\zeta_{n-1})}(x_n,x_1,\dots,x_{n-1}; 1, t) \\
	&= G_{(\eta_n,\eta_1,\dots,\eta_{n-1})}(x_n,x_1,\dots,x_{n-1},t),
\end{align*}
as desired.
\end{proof}

\begin{lem}\label{twoKZ}
	Suppose that  
$\{f_\eta \ \vert \  \eta\in S_{\lambda}\}$ and $\{g_\eta \ \vert \ 
\eta\in S_{\lambda}\}$ 
are both KZ families. If the ratio $h:=g_\lambda/f_\lambda$
of the partition-indexed terms in the two families 
is symmetric in $x_1,\dots, x_n$, 
then the families $(f_\eta)$ and $(g_\eta)$ are proportional 
to each other, i.e.\ $g_\eta/f_\eta=h$ for all $\eta\in S_\lambda$. 
\end{lem}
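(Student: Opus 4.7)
The plan is to prove the statement by descending induction along the weak Bruhat order on $S_\lambda$. For each $\eta\in S_\lambda$, set $d_\eta := g_\eta - h f_\eta$. The hypothesis gives $d_\lambda = 0$, and the goal is to show $d_\eta = 0$ for every $\eta\in S_\lambda$, from which $g_\eta/f_\eta = h$ follows.

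The key observation is that since $h$ is symmetric in $x_1,\dots,x_n$, \cref{rem:operator} tells us that multiplication by $h$ commutes with each Hecke operator $T_i$, so that $T_i(h f_\eta) = h\, T_i(f_\eta)$. The inductive step then runs as follows: suppose $d_\eta = 0$ for some $\eta\in S_\lambda$, and pick a position $i$ with $\eta_i > \eta_{i+1}$. Applying \eqref{firstproperty} at $q=1$ to both KZ families gives $T_i f_\eta = f_{s_i\eta}$ and $T_i g_\eta = g_{s_i\eta}$, and so
\[
d_{s_i\eta} \;=\; g_{s_i\eta} - h f_{s_i\eta} \;=\; T_i g_\eta - T_i(h f_\eta) \;=\; T_i(g_\eta - h f_\eta) \;=\; T_i d_\eta \;=\; 0.
\]

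To conclude, I will invoke the standard fact that $\lambda$ (the weakly decreasing arrangement of its multiset of parts) is the maximum element of the weak left Bruhat order on $S_\lambda$: it has the maximum number of inversions among all multiset permutations, and every other $\eta\in S_\lambda$ can be reached from $\lambda$ by a sequence of swaps $\eta' \mapsto s_i\eta'$ with $\eta'_i > \eta'_{i+1}$ (each such swap reduces the inversion count by one). Iterating the inductive step along such a sequence propagates the vanishing of $d$ from $\lambda$ to every $\eta\in S_\lambda$.

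The proof is essentially routine once the commutation $T_i \circ h = h \circ T_i$ is in hand, so the only mildly delicate point is the reachability claim for the weak Bruhat order on multiset permutations; but this is classical (and in fact only requires relation \eqref{firstproperty} of the KZ family, with \eqref{secondproperty} and \eqref{thirdproperty} playing no role). Note also that the argument never requires dividing $g_\eta$ by $f_\eta$, which is convenient in case any of the $f_\eta$ happen to have zeros or be unsuitable for forming ratios: the statement is best phrased as $g_\eta = h f_\eta$.
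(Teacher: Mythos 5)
Your proposal is correct and follows essentially the same argument as the paper: both reach an arbitrary $\eta\in S_\lambda$ from $\lambda$ by applying operators $T_i$ along a sequence of swaps turning decreasing pairs into increasing ones, and both use \cref{rem:operator} (commutation of $T_i$ with multiplication by the symmetric function $h$) together with \eqref{firstproperty} to propagate $g_\lambda = h f_\lambda$ to $g_\eta = h f_\eta$. Your reformulation via $d_\eta = g_\eta - h f_\eta$ is just a cosmetic repackaging of the paper's induction.
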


\begin{proof}
Let $\lambda$ be a partition and let $\eta\in S_\lambda$. 
We can obtain $\eta$ from $\lambda$ by a sequence of nearest-neighbour transpositions,
each of which changes a pair of entries from decreasing order into 
increasing order. Hence from \eqref{firstproperty} we have
$f_\eta = T_{i_k}\dots T_{i_1} f_\lambda$ 
and $g_\eta = T_{i_k}\dots T_{i_1} g_\lambda$, for some sequence
$i_1,\dots i_k$. Then by induction and \cref{rem:operator}, if 
$g_{\lambda} = h f_\lambda$ where $h$ is symmetric in $x_1,\dots, x_n$,
then $g_\eta = h f_\eta$. 
\end{proof}

\begin{defn}\label{def:inc}
Use \cref{not:proj}.
Let $\inc(\lambda, \phi)$ be the 
lexicographically smallest $\zeta\in S_\lambda$
such that $\phi(\zeta)=\mu$.
\end{defn}
\begin{eg}
Suppose 
$\phi(6)=\phi(5)=\phi(4)=5, \phi(3)=4, \phi(2)=\phi(1)=1$.
	Let $\lambda=(6,6,6,5,4,3,3,2,2,1,1)$, so that 
	$\mu=(5,5,5,5,5,4,4,1,1,1,1)$. 
	Then $\inc(\lambda,\phi)=(4,5,6,6,6,3,3,1,1,2,2)$.
\end{eg}

\begin{prop}
\label{prop:Gmu}
Use  \cref{not:proj}.
	We have that $G_\mu = E_{\inc(\lambda,\phi)}(q=1)$.
\end{prop}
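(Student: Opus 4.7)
The plan is to prove the identity by first reducing to the case where $\lambda$ is its own weak standardization, then proceeding by induction on the number of class-merges performed by $\phi$, with \cref{thm:main} as the main algebraic tool at each inductive step.

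For the reduction, I would apply \cref{cor:AS} termwise to the sum in \eqref{eq:G} to obtain
\[
G_\mu = h_\lambda \cdot G^{(\tilde\lambda)}_{\tilde\mu},
\]
where $h_\lambda = e_{\lambda'}/e_{\tilde\lambda'}$ and $G^{(\tilde\lambda)}$ is the analogous sum defined for the weakly standardized partition $\tilde\lambda$ together with the induced order-preserving map $\tilde\phi$. On the right-hand side, setting $\eta = \inc(\lambda,\phi)$ and noting that $\eta$ and $\lambda$ share the same multiset of parts, \cref{cor:AS0} gives
\[
E_\eta(\mathbf{x};1,t) = h_\lambda \cdot E_{\tilde\eta}(\mathbf{x};1,t).
\]
A direct combinatorial check, using the lex-smallest characterization in \cref{def:inc}, identifies $\tilde\eta = \inc(\tilde\lambda,\tilde\phi)$. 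The $h_\lambda$ factors cancel, reducing the claim to the case $\lambda=\tilde\lambda$.

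Assuming $\lambda=\tilde\lambda$, I would proceed by induction on $s-r$, where $s$ and $r$ are the numbers of distinct values of $\lambda$ and $\mu$ respectively. The base case $s=r$ corresponds to $\phi$ acting as the identity on the support of $\lambda$; in this case $\mu=\lambda$, $\inc(\lambda,\phi)=\lambda$, and $G_\mu = F_\lambda(\mathbf{x};1,t) = E_\lambda(\mathbf{x};1,t)$ by \cref{def:ASEPpolynomials}. For the inductive step, factor $\phi = \phi_2\circ\phi_1$, where $\phi_1$ merges exactly two adjacent classes $a<b$ of $\lambda$ and $\phi_2$ handles the remaining merges. The inductive hypothesis applied to $\phi_2$ reduces $G_\mu$ to a sum over interleavings of $a$'s and $b$'s within the newly combined $\mu$-block, which \cref{thm:main} identifies with $E_{\inc(\lambda,\phi)}(\mathbf{x};1,t)$ via the explicit ratio of elementary symmetric polynomials.

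The main obstacle will be the inductive step: precisely matching the combinatorial sum over $a,b$-interleavings with the ratio formula from \cref{thm:main}, and verifying the telescoping when $\phi_2$ is applied. An alternative route, which sidesteps some of this bookkeeping, uses \cref{twoKZ}: both $\{G_\eta\}_{\eta\in S_\mu}$ (by \cref{lem:g-is-KZ}) and $\{F_\eta(\mathbf{x};1,t)\}_{\eta\in S_\mu}$ are KZ families, and combining \cref{cor:AS0} with \cref{prop:AS} shows that the candidate ratio $E_{\inc(\lambda,\phi)}(\mathbf{x};1,t)/E_\mu(\mathbf{x};1,t)$ is symmetric in $\mathbf{x}$. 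One can then identify $G_\mu$ with $E_{\inc(\lambda,\phi)}(\mathbf{x};1,t)$ by matching a single specialization.
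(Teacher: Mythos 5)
There is a genuine gap in both routes you propose, and the missing ingredient is precisely the idea the paper's proof turns on. Your inductive step asserts that \cref{thm:main} ``identifies'' the sum of ASEP polynomials over all $a,b$-interleavings with $E_{\inc(\lambda,\phi)}(\mathbf{x};1,t)$. But \cref{thm:main} only relates two \emph{individual} nonsymmetric Macdonald polynomials $E_{\eta}$ and $E_{\hat\eta}$; it gives no mechanism for converting a sum $\sum_{\zeta}F_{\zeta}(\mathbf{x};1,t)$ (whose summands, for $\zeta$ not a partition, are not themselves nonsymmetric Macdonald polynomials) into a single $E$. Already in the smallest example, $\lambda=(2,1,0)$ with $1$ and $2$ merged, the claim is $F_{(2,1,0)}+F_{(1,2,0)}=E_{(1,2,0)}$ at $q=1$, while \cref{thm:main} only tells you $E_{(1,2,0)}=(e_2/e_1)E_{(1,1,0)}$ --- a statement about the right-hand side alone. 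The composition of merges in your induction has the same problem: after one merge the intermediate sums $G^{(1)}_{\xi}$ for non-partition $\xi$ are not covered by the inductive hypothesis, and grouping them requires already knowing that $G^{(1)}_{\xi}$ is a fixed symmetric multiple of $F_{\xi}$, which is essentially the statement being proved. Your alternative route via \cref{twoKZ} has a matching gap: that lemma requires as a \emph{hypothesis} that the ratio $G_{\mu}/F_{\mu}$ of the partition-indexed members is symmetric, whereas you only establish that the \emph{candidate} ratio $E_{\inc(\lambda,\phi)}/E_{\mu}$ is symmetric. ``Matching a single specialization'' cannot close this, because the space of KZ families of content $\mu$ at $q=1$ is far from one-dimensional ($\{hF_{\eta}\}$ is a KZ family for every symmetric $h$), so knowing $\{G_{\eta}\}$ is a KZ family does not pin down $G_{\mu}$.

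The paper's proof supplies exactly what is missing, by a spectral argument with the Cherednik--Dunkl operators. First, the triangular change of basis between $\{F_{\zeta}\}$ and $\{E_{\zeta}\}$ shows $G_{\mu}$ lies in the finite-dimensional span of $\{E_{\zeta}(q=1):\zeta\in S_{\lambda}\}$. Second, the KZ relations (your \cref{lem:g-is-KZ}, which is correct and is used by the paper too) imply, as in \cite[Lemma 1.23]{corteel-mandelshtam-williams-2022}, that $G_{\mu}$ is a simultaneous eigenvector of the $Y_i$ with the same eigenvalues as $E_{\mu}$ at $q=1$, which by \eqref{eq:eigenvalue} coincide with those of $E_{\inc(\lambda,\phi)}$ at $q=1$. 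Third, by \cref{rem:eigenvalues} the eigenvalue tuples of $\{E_{\zeta}(q=1):\zeta\in S_{\lambda}\}$ are pairwise distinct, so $G_{\mu}$ must be proportional to $E_{\inc(\lambda,\phi)}$, and comparing leading terms gives equality. The ingredients you cite (\cref{thm:main}, \cref{prop:AS}, \cref{twoKZ}) are all genuinely used in the paper, but in the \emph{downstream} results \cref{thm:symm} and \cref{prop:f-projection}, for which \cref{prop:Gmu} is an input --- so your proposal effectively assumes the hard step rather than proving it.
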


\begin{proof} 
The ASEP polynomials and the nonsymmetric Macdonald polynomials
are related via a triangular change of basis 
\cite[(23)]{cantini-degier-wheeler-2015}, and hence
the span of $\{E_\eta(q=1) \ \vert \ \eta\in S_{\lambda}\}$ 
is the same as the span of 
$\{F_\eta(q=1) \ \vert \  \eta\in S_{\lambda}\}$.
Therefore  $G_\mu$ lies  in 
the span of $\{E_\eta(q=1) \ \vert \ \eta\in S_{\lambda}\}$. 

We know from \cref{lem:g-is-KZ}  that 
$\{G_{\eta} \ \vert \ \eta\in S_{\mu}\}$ is a KZ family.
Proceeding as in the proof of 
\cite[Lemma 1.23]{corteel-mandelshtam-williams-2022}, 
we can 
use the relations of the KZ family to show that $G_{\mu}$ is an eigenvector
of each $Y_i$, and that the eigenvalue of $Y_i$ on $G_\mu$
is the same as the eigenvalue of $Y_i$ on  
$F_{\mu} = E_{\mu}$ 
when $q=1$.
But now it is easy to see from 
\eqref{eq:eigenvalue} that the 
eigenvalues of $Y_i$ on $E_{\mu}$ at $q=1$
are the same as the eigenvalues of $Y_i$ on  
$E_{\inc(\lambda,\phi)}$ at $q=1$.
Recall from \cref{rem:eigenvalues}
that the polynomials $\{E_\eta(q=1), \eta\in S_\lambda$\}
have distinct tuples of eigenvalues at $q=1$. Since $G_\mu$
is in the span of those polynomials and its eigenvalues
agree with those of $E_{\inc(\lambda, \phi)}$, it must be a multiple
of $E_{\inc(\lambda,\phi)}$. Since both polynomials
have
the same leading term, it follows
that in fact $G_\mu = E_{\inc(\lambda,\phi)}$.
\end{proof}

\begin{thm}\label{thm:symm}
Use \cref{not:proj}.
We have that at $q=1$, $E_{\inc(\lambda, \phi)}$ is a symmetric function
multiple of $E_{\mu} = F_\mu$. 
\end{thm}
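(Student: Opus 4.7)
The plan is to combine \cref{prop:Gmu} with iterated applications of \cref{thm:main}, followed by a final invocation of \cref{prop:AS}. By \cref{prop:Gmu}, $G_\mu = E_{\inc(\lambda,\phi)}$ at $q=1$, and since $\mu$ is a partition, $F_\mu = E_\mu$ directly from \cref{def:ASEPpolynomials}. So it suffices to exhibit $E_{\inc(\lambda,\phi)}/E_\mu$ as a symmetric rational function at $q=1$.

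First I would analyze the structure of $\eta^{(0)} := \inc(\lambda,\phi)$. Because $\phi$ is weakly order-preserving, each preimage $\phi^{-1}(v)$ is an interval of $\NN$, and distinct preimages are disjoint intervals ordered consistently with the values $v$. Consequently, within each $\mu$-class (the positions where $\mu$ takes a given value $v$), $\eta^{(0)}$ deploys the multiset of $\lambda$-values in $\phi^{-1}(v)$ in weakly increasing order. If $v_1<v_2<\cdots<v_k$ are the distinct $\lambda$-values in $\phi^{-1}(v)$, there are no $\lambda$-values $h$ anywhere in $\eta^{(0)}$ with $v_j < h < v_{j+1}$: the weak-order-preserving property of $\phi$ forces any such $h$ into $\phi^{-1}(v)$, contradicting the consecutiveness of $v_j$ and $v_{j+1}$.

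Next I would iterate \cref{thm:main} within each $\mu$-class: merge $v_1$ into $v_2$, then the resulting consecutive pair, and so on. At every step, the two values being merged appear in increasing order within the relevant $\mu$-class and nothing in the remaining composition lies strictly between them, so the hypotheses of \cref{thm:main} are satisfied and it yields a factor of the form $(e_A/e_B)^{v_{j+1}-v_j}$ relating the two consecutive nonsymmetric Macdonald polynomials at $q=1$. After all merges are complete, every position of $\mu$-value $v$ carries the single $\lambda$-value $v^{\#} := \max\bigl(\phi^{-1}(v)\cap\{\lambda_1,\dots,\lambda_n\}\bigr)$. Because $\phi$ is weakly order-preserving, $v\mapsto v^{\#}$ is strictly increasing on the distinct $\mu$-values, so the resulting composition $\eta^{\#}$ is itself a weakly decreasing partition with the same positions-per-value multiplicity pattern as $\mu$; in particular, $\widetilde{\eta^{\#}} = \tilde{\mu}$.

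Finally I would apply \cref{prop:AS} to both $\eta^{\#}$ and $\mu$: at $q=1$, both $E_{\eta^{\#}}/E_{\tilde{\mu}}$ and $E_{\mu}/E_{\tilde{\mu}}$ are ratios of elementary symmetric polynomials, hence so is $E_{\eta^{\#}}/E_{\mu}$. Chaining this with the iterated applications of \cref{thm:main} expresses $E_{\inc(\lambda,\phi)}/E_\mu$ as a product of ratios of elementary symmetric polynomials, i.e.\ a symmetric rational function, as required. The main obstacle is the bookkeeping in the iteration: one must verify that after each merge, the remaining distinct $\lambda$-values within a $\mu$-class remain separated by the absence of any intermediate $\lambda$-values anywhere in the composition, so that the hypotheses of \cref{thm:main} continue to hold at each subsequent step.
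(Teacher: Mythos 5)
Your proposal is correct and follows essentially the same route as the paper, which obtains the theorem ``by repeated applications of \cref{cor:main} together with \cref{prop:AS}'': you simply perform all the class-merges of \cref{thm:main} first and defer the weak standardization of \cref{prop:AS} to a single final step, whereas \cref{cor:main} interleaves them. The structural analysis of $\inc(\lambda,\phi)$ and the verification that the adjacency hypotheses persist after each merge are exactly the bookkeeping the paper leaves implicit, and your account of it is sound.
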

\begin{proof}
Since $\mu$ is a partition, we have that $F_{\mu}=E_{\mu}$
from the definition of ASEP polynomials.
Now this theorem can be obtained by repeated applications
of \cref{cor:main},
 together with \cref{prop:AS}.
\end{proof}

\begin{thm}\label{prop:f-projection}
	Use  \cref{not:proj}.
For all $\eta\in S_{\mu}$
and for $q=1$, we have
\[
\frac{G_\eta}{P_\lambda} = 
	\frac{G_\eta}{\sum_{\theta \in S_{\mu}} G_{\theta}} = 
	\frac{F_\eta}{\sum_{\theta \in S_{\mu}} F_{\theta}} = 
	\frac{F_\eta}{P_\mu }.
\]
\end{thm}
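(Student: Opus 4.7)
The plan is to combine the two families $\{F_\eta\}_{\eta\in S_\mu}$ and $\{G_\eta\}_{\eta\in S_\mu}$ as KZ families at $q=1$, and then apply \cref{twoKZ} once a symmetric ratio has been identified at the partition-indexed term.

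First I would dispose of the two outer equalities. By \cref{lem:Macdonald},
\[
\sum_{\theta\in S_\mu} F_\theta(\mathbf{x};1,t) = P_\mu(\mathbf{x};1,t),
\]
which gives the last equality. For the first equality, I observe that every $\zeta\in S_\lambda$ satisfies $\phi(\zeta)\in S_\mu$, so that from the definition \eqref{eq:G},
\[
\sum_{\theta\in S_\mu} G_\theta(\mathbf{x};t) = \sum_{\theta\in S_\mu}\sum_{\substack{\zeta\in S_\lambda\\ \phi(\zeta)=\theta}} F_\zeta(\mathbf{x};1,t) = \sum_{\zeta\in S_\lambda} F_\zeta(\mathbf{x};1,t) = P_\lambda(\mathbf{x};1,t),
\]
again using \cref{lem:Macdonald}.

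The heart of the proof is the middle equality, which amounts to showing that the ratio $G_\eta/F_\eta$ is the same for every $\eta\in S_\mu$ (and is in fact a symmetric polynomial in $\mathbf{x}$). For this, note that $\{F_\eta(\mathbf{x};1,t)\}_{\eta\in S_\mu}$ is a KZ family (it is a qKZ family at $q=1$ by \cref{def:ASEPpolynomials}), and $\{G_\eta(\mathbf{x};t)\}_{\eta\in S_\mu}$ is a KZ family by \cref{lem:g-is-KZ}. Their partition-indexed terms satisfy $F_\mu = E_\mu$ and, by \cref{prop:Gmu}, $G_\mu = E_{\inc(\lambda,\phi)}(q=1)$. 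By \cref{thm:symm}, the ratio
\[
h(\mathbf{x};t) := \frac{G_\mu(\mathbf{x};t)}{F_\mu(\mathbf{x};1,t)} = \frac{E_{\inc(\lambda,\phi)}(\mathbf{x};1,t)}{E_\mu(\mathbf{x};1,t)}
\]
is a symmetric polynomial in $x_1,\dots,x_n$. Applying \cref{twoKZ} to the two KZ families, we conclude that $G_\eta = h\cdot F_\eta$ for every $\eta\in S_\mu$.

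Combining the steps, summing the identity $G_\theta = h\cdot F_\theta$ over $\theta\in S_\mu$ gives $P_\lambda = h\cdot P_\mu$, and hence
\[
\frac{G_\eta}{P_\lambda} = \frac{h\cdot F_\eta}{h\cdot P_\mu} = \frac{F_\eta}{P_\mu},
\]
which is the required equality. The main obstacle has already been handled in the earlier sections: the identification of $G_\mu$ with $E_{\inc(\lambda,\phi)}$ at $q=1$ via \cref{prop:Gmu}, and the fact that this polynomial differs from $E_\mu$ by a symmetric factor via \cref{thm:symm} (itself a consequence of the iterated applications of \cref{cor:main} and \cref{prop:AS}). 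Given these inputs, the present theorem reduces to the clean two-line application of \cref{twoKZ} described above.
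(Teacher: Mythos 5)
Your proposal is correct and follows essentially the same route as the paper's own proof: both identify $G_\mu=E_{\inc(\lambda,\phi)}$ via \cref{prop:Gmu}, invoke \cref{thm:symm} to see that $G_\mu/F_\mu$ is symmetric, apply \cref{twoKZ} to propagate the proportionality to all $\eta\in S_\mu$, and finish with the two summation identities $\sum_\theta F_\theta=P_\mu$ and $\sum_\theta G_\theta=P_\lambda$. Your write-up is, if anything, slightly more explicit than the paper's in checking that both families are KZ families before invoking \cref{twoKZ}.
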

\begin{proof}
By \cref{prop:Gmu},
	we have that $G_\mu = E_{\inc(\lambda,\phi)}(q=1)$.
Since $\mu$ is a partition, we also know by 
\cref{def:ASEPpolynomials}
	that $F_{\mu} = E_{\mu}$.
So by 
\cref{thm:symm}, $h:=G_{\mu}/F_{\mu}(q=1)$ is 
symmetric in $x_1,\dots,x_n$. We can therefore apply 
\cref{twoKZ} to conclude that 
	$G_{\eta} = h F_{\eta}$ for all $\eta \in S_{\mu}$.
	But now it follows that 
	$$\frac{G_\eta}{\sum_{\theta \in S_{\mu}} G_{\theta}} = 
	\frac{F_\eta (q=1)}{\sum_{\theta \in S_{\mu}} F_{\theta}(q=1)}.$$
The other equalities in the proposition follow after using the fact that 
	$\sum_{\theta\in S_{\mu}} F_{\theta} = P_{\mu}$ 
	and $\sum_{\theta \in S_{\mu}} G_{\theta} = 
	\sum_{\xi \in S_{\lambda}} F_{\xi}(q=1) = P_{\lambda}(q=1).$
\end{proof}

\begin{eg}
We continue \cref{eg:project}.  Let $\eta = (1,2,1)$.
Then  by 
\cref{prop:f-projection},  when we specialize to 
	$q=1$, we have 
\begin{align*}
\frac{G_{(1,2,1)}}{P_{(2,1,0)}} &=
\frac{G_{(1,2,1)}}{G_{(2,1,1)}+G_{(1,2,1)}+G_{(1,1,2)}}\\
&= 
	\frac{F_{(0,2,1)}+F_{(1,2,0)}}{F_{(2,0,1)}+F_{(2,1,0)} + F_{(0,2,1)}+F_{(1,2,0)}+F_{(0,1,2)}+F_{(1,0,2)}} \\
&=
\frac{F_{(1,2,1)}}{F_{(2,1,1)}+F_{(1,2,1)}+F_{(1,1,2)}} 
	= \frac{F_{(1,2,1)}}{P_{(2,1,1)}}
	{= \frac{x_2}{x_1 + x_2 + x_3}}.
\end{align*}
\end{eg}

\section{Multiline diagrams}
\label{sec:asep}
In this section we define \emph{multiline diagrams}.
These are combinatorial objects which were introduced 
in the context of the multispecies TASEP in \cite{ferrari-martin-2009}, and have subsequently been generalised to a range of
related settings, including in \cite{martin-2020}
in the context of the multispecies ASEP, and 
in \cite{corteel-mandelshtam-williams-2022} in 
the context of Macdonald polynomials. 

\subsection{Definition of multiline diagrams}
The definition we present here is a special case of the 
definition given in 
\cite{corteel-mandelshtam-williams-2022}: 
we impose $q=1$, and only define multiline diagrams with bottom row
$\eta$, where $\eta=(\eta_1,\dots,\eta_n)$ is a composition whose nonzero parts
are distinct.

Fix a partition $\lambda=(\lambda_1,\dots,\lambda_n) = 
 \langle s^{m_s}, \dots, 1^{m_1}, 0^{m_0} \rangle$, 
 where $\lambda_1=s$ and 
$m_{i}\in\{0,1\}$ for all $i\geq 1$. 
Let $a_i=\sum_{r=i}^s m_r$
be the number of particles of type $i$ or higher.

\begin{defn}
	A \emph{ball system} with \emph{content $\lambda$}
is an array with $s$ rows and $n$ columns 
in which each of the $sn$ positions
is either empty or occupied by a ball, and in which
the number of balls in row $i$ is $a_i$. 
We number the rows
from $1$ to $s$ from bottom to top, and the sites 
from $1$ to $n$ from left to right.
\end{defn}
 
\begin{defn}
\label{defn:md}
Given a ball system $B$ with content $\lambda$, 
a \emph{multiline diagram} $D$  (with pattern $B$) is an assignment of types (positive integers)
to the balls of $B$. The $a_i$ balls in row $i$ 
are given distinct labels from the set (of size $a_i$)
$\{r\geq i: m_r=1\}$. 
The labelling satisfies the following 
constraint: if two vertically adjacent sites $(i,j)$ and $(i, j+1)$ 
both contain a ball, then the label of the lower site $(i,j)$ 
must be at least as large as the label of the upper site $(i,j+1)$. 
	We also call $\lambda$ the content of the diagram. 
\end{defn}

Each row $i$ of a multiline diagram $D$ gives rise
to a composition $\rho^{(i)}=(\rho^{(i)}_1,\dots,\rho^{(i)}_n)$, where
\[
\rho^{(i)}_j=
\begin{cases}
h&\text{if $D$ has a ball with label $h$ at $(i,j)$};\\
0&\text{if $(i,j)$ is empty in $D$}.
\end{cases}
\]
The composition $\rho^{(1)}$ associated 
to the bottom row of $D$ is sometimes called the
\emph{type} of $D$. 
If $D$ has content $\lambda$ then $\rho^{(1)}(D)\in S_\lambda$.

\subsection{Generating multiline diagrams}
\label{sec:generate}
We next explain a procedure for randomly generating 
a multiline diagram {with} content $\lambda$. As
above let $s$ be the largest entry of $\lambda$, 
so that the diagram has $s$ rows.

We first generate the ball system. The occupancies on 
different rows are independent. On row $r$, where we
require $a_r$ balls, we occupy a given set of sites $A_r\subset[n]$
of size $a_r$ with probability proportional to 
$\prod_{j\in A_r} x_j$. (This is precisely the stationary 
distribution of the single-type $t$-PushTASEP, as given  in 
\cref{prop:ss-singlePushTASEP}
 -- the normalising constant is given 
by the elementary symmetric function $e_{a_r}(x_1, \dots, x_n)$.)

Now we assign labels to the balls in the system. This is 
done recursively line by line, working 
from top (row $s$) to bottom (row $1$). 

\begin{itemize}
\item
We assign the single ball on row $s$ (the top row) the label $s$. 
\item
Now suppose we have already labelled the rows from $s$
down to $r+1$,
and it is time to label row $r$.
We consider the balls in row $r+1$ one by one in 
decreasing order of 
their label. We match each one to a ball on row $r$, 
and that ball on row $r$ will be given the same label.

Suppose we are considering the ball with label $h$ on row $r+1$, 
with position in some column $j$. 
First we check whether the site immediately 
below it, $(r,j)$, has a ball which has not yet been labelled.
If so we match to that ball, labelling it $h$. This is called 
a \textit{trivial match}.
Otherwise, 
consider all the balls remaining in row $r$ which have not yet been labelled -- there are a total of $K:=a_r - a_{h+1}$ of them. Suppose their columns, listed from left to right in cyclic order starting from column $a$, are $j_1, j_2, \dots, j_K$: that is, 
\[
0<(j_1-j) \pmod n < (j_2-j) \pmod n < \dots < (j_K-j) \pmod n.
\]
Now we match the $h$-labelled ball at $(r+1,j)$ 
	to the ball at position $(r, j_k)$ 
with probability $t^{k-1}/(1+t+\dots+t^{K-1})$,
and assign the label $h$ to that ball. 

\item
In this way every ball on row $r+1$ gets matched to a ball on row $r$. If $a_r=a_{r+1}$ then we have labelled every ball on row $r$,
and we move on to labelling the balls in the rows 
		below. If instead $a_r=a_{r+1}+1$, 
then there remains a single unlabelled ball on row $r$, and we 
assign it label $r$. 
\end{itemize}
Proceeding in this way we construct a labelling having
the properties in \cref{defn:md}.

\subsection{Weight function for multiline diagrams}
Closely related to the above sampling procedure, we define
a weight function on multiline diagrams. For a given collection
of particle counts, and given parameters $(x_1, \dots, x_n)$
and $t$, the probability of sampling a given diagram
using the procedure above is proportional to its \emph{weight},
as defined in \cref{def:weight}. 

\begin{defn}\label{def:weight}
Let $D$ be a multiline diagram with pattern $B$, where $B$ is a 
$s \times n$ ball system.  

For $1\leq j \leq n$ let $c_j$ be the number of balls in column $j$. 
Then the $x$-weight of the diagram is defined by $\wt_x(D)=\prod_{j=1}^n x_j^{c_j}$.  

The $t$-weight is defined as follows. 
Consider $h\geq 2$ such that $m_h=1$. Then there is one ball with label
$h$ in each of the rows $h$ and below. For each $r=1,\dots, h-1$
	we associate a local weight $w_D(h,r)$ to the ball with  
label $h$ in row $r$ as follows:
\begin{itemize}
\item
If the balls of label $h$ in rows $r+1$ and $r$ are in the same column 
(corresponding to a trivial match),
then $w_D(h,r)=1$. 
\item
Otherwise:
\begin{itemize}
\item
Let $K$ be the number of balls in row $r$ with label at most $h$;
\item
Let $j$ be the column with the ball labelled $h$ in row $r+1$, and $j'$ the column with the ball labelled $h$ in row $r$. Consider the interval with left endpoint $j$ and right endpoint $j'$ (wrapping cyclically around the ring if necessary). Let $\ell$ be the number of balls in row $r$ between columns $j$ and $j'$ with label less than $h$.
 We have $0\leq \ell\leq K-1$. 
\end{itemize}
Then define 
\begin{equation}\label{wDhdef}
w_D(h,r)=\frac{t^\ell}{1+t+\dots+t^{K-1}}. 
\end{equation}
\end{itemize}

The $t$-weight of the diagram is then the product of all
{these $w_D(h,r)$ weights}:
\begin{equation}\label{tweight}
\wt_t(D)=\prod_{r=1}^{s-1}
\prod_{\substack{r<h\leq s :\\m_h=1}}  
w_D(h,r).
\end{equation}
Finally we define the \emph{weight} $\wt(D)$ 
of  diagram $D$ to be the product of its \textit{$x$-weight}
and	\emph{$t$-weight}, that is,
$$\wt(D)=\wt_x(D)\wt_t(D).$$
\end{defn}

Note first that $\wt_x(D)$ is proportional to the probability of 
generating the ball system $B$ of $D$ in the first step of the 
procedure above. (The constant of proportionality is 
$\prod_{r=1}^s e_{a_r}(x_1,\dots, x_n)$.)

Also note that for each $h$ and $r$, $w_D(h,r)$ is precisely the probability of making the given matching of the $h$-labelled ball between 
rows $r+1$ and $r$ at the relevant step of the labelling process. 

As a result, the conditional probability of obtaining the configuration
of $D$ on row $r$, given the ball system and the configuration of $D$ on rows $s$ down to 
$r+1$, is the product
\begin{equation}\label{row probability}
\prod_{\substack{h:r<h\leq s :\\m_h=1}}  w_D(h,r),
\end{equation}
which depends on $D$ only through its rows $r$ and $r+1$. 

Hence the probability of generating a given diagram $D$ is proportional 
to $\wt_x(D)$ multiplied by the product of \eqref{row probability}
over $r$ from $s-1$ down to $1$. This yields exactly $\wt(D)$, as required.
See \cref{fig:multiline diagram example} for 
an example of a multiline diagram and its weight function.

\begin{center}
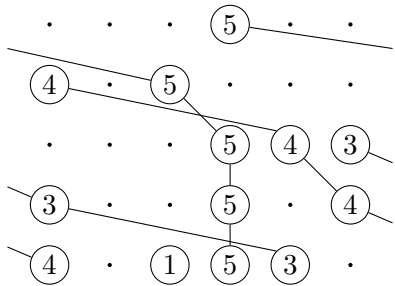
\begin{figure}[h!]

\begin{tikzpicture}[scale=0.8]
\pgfdeclarelayer{background}
\pgfdeclarelayer{foreground}
\pgfsetlayers{background,main,foreground} 

\begin{pgfonlayer}{background}
    \newcommand\rows{5}
    \newcommand\cols{6}

    \foreach \row in {1,...,\rows} {
        \foreach \col in {1,...,\cols} {
            \fill (\col,\row) circle (1pt); 
        }
    }
\end{pgfonlayer}

\node[draw,circle,fill=white, inner sep=2pt] at (4,5) (ball5-5) {5};
\node[draw,circle,fill=white, inner sep=2pt] at (3,4) (ball5-4) {5};
\node[draw,circle,fill=white, inner sep=2pt] at (4,3) (ball5-3) {5};
\node[draw,circle,fill=white, inner sep=2pt] at (4,2) (ball5-2) {5};
\node[draw,circle,fill=white, inner sep=2pt] at (4,1) (ball5-1) {5};

\node[draw,circle,fill=white, inner sep=2pt] at (1,4) (ball4-4) {4};
\node[draw,circle,fill=white, inner sep=2pt] at (5,3) (ball4-3) {4};
\node[draw,circle,fill=white, inner sep=2pt] at (6,2) (ball4-2) {4};
\node[draw,circle,fill=white, inner sep=2pt] at (1,1) (ball4-1) {4};

\node[draw,circle,fill=white, inner sep=2pt] at (6,3) (ball3-3) {3};
\node[draw,circle,fill=white, inner sep=2pt] at (1,2) (ball3-2) {3};
\node[draw,circle,fill=white, inner sep=2pt] at (5,1) (ball3-1) {3};

\node[draw,circle,fill=white, inner sep=2pt] at (3,1) (ball1-1) {1};

\draw (ball5-5) -- (6.7, 4.6);
\draw (0.3,4.6) -- (ball5-4);
\draw (ball5-4) -- (ball5-3);
\draw (ball5-3) -- (ball5-2);
\draw (ball5-2) -- (ball5-1);

\draw (ball4-4) -- (ball4-3.135);
\draw (ball4-3) -- (ball4-2);
\draw (ball4-2) -- (6.7, 1.7);
\draw (0.3, 1.3) --(ball4-1);

\draw (ball3-3) -- (6.7, 2.7);
\draw (0.3, 2.3) -- (ball3-2);
\draw (ball3-2) -- (ball3-1.135);

\end{tikzpicture}

\caption{A multiline diagram $D$ with 
$n=6$ columns and $s=5$ rows, with
content $\lambda=(5,4,3,1,0,0)$ and bottom 
row $\rho^{(1)}(D)=(4,0,1,5,3,0)\in S_\lambda$. It has 
weight
$\wt(D)=\wt_x(D) \wt_t(D)=x_1^3 x_3^2 x_4^4 x_5^2 x_6^2\, t^2$.
\label{fig:multiline diagram example}}
\end{figure}
\end{center}

\begin{lem}
\label{lem:extra-row}
Let $\lambda=(\lambda_1,\dots,\lambda_n) = 
 \langle s^{m_s}, \dots, 1^{m_1}, 0^{m_0} \rangle$
	be a partition with 
distinct entries, no entry equal to $1$, and exactly one entry equal to $0$. 
	That is, $m_i\in \{0,1\}$, $m_1 = 0$, and $m_0=1$.
Let $\eta, \eta'\in S_\lambda$, and $j\in\{1,\dots, n\}$. 
Then the following quantities are equal.
\begin{itemize}
\item
the probability  
in the $t$-PushTASEP of transitioning from state $\eta$ to state $\eta'$, 
when a bell rings at site $j$ in
		state $\eta$;

\item given a multiline diagram with content $\lambda$,
such that row $2$ 
has configuration $\eta$ and the unique
vacancy in row $1$ is at site $j$,
the conditional probability that row $1$ 
has configuration $\eta'$.
\end{itemize}
\end{lem}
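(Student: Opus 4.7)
The plan is to match the $t$-PushTASEP transition step-by-step with the multiline diagram's label-matching procedure from row~$2$ to row~$1$; conditional on row~$2$, this procedure is independent of the higher rows, so the conditional probability of row~$1 = \eta'$ is exactly $\prod_{h:\, m_h=1,\ h\geq 2} w_D(h,1)$.

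The case $\eta_j = 0$ is immediate: the $t$-PushTASEP makes no move, while in the diagram every row-$2$ label lies at a column $\neq j$, finds an unlabelled ball immediately below it, and matches trivially; both sides give row~$1 = \eta$ with probability~$1$. For $\eta_j > 0$, a nontrivial transition is parameterised by a pushing chain $j = j_0 \to j_1 \to \cdots \to j_k = v$ (with $v$ the $\eta$-vacancy) and species $r_i := \eta_{j_i}$ strictly decreasing, occurring with conditional probability $\prod_{i=0}^{k-1} t^{\ell_i}/[K_i]_t$, where $K_i = \#\{p : \eta_p < r_i\}$ and $\ell_i = \#\{p \in (j_i, j_{i+1}) : \eta_p < r_i\}$ (the open clockwise arc). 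I would prove by induction on $i$ that, as the diagram processes row-$2$ labels in decreasing order, the non-trivially matched labels are exactly $r_0 > r_1 > \cdots > r_{k-1}$, with the $i$-th one sending $r_i$ from column $j_i$ to column $j_{i+1}$ with the same conditional probability $t^{\ell_i}/[K_i]_t$. The key bookkeeping is that when label $r_i$ is about to be processed, the already-labelled row-$1$ positions form the set $\bigl(\{p : \eta_p > r_i\}\setminus \{j_0,\ldots,j_{i-1}\}\bigr) \cup \{j_1,\ldots,j_i\}$ (trivial matches from labels $>r_i$ off the chain, together with previous non-trivial destinations); using that the nonzero parts of $\lambda$ are distinct and $m_1=0$, the complement in $\{1,\ldots,n\}\setminus\{j\}$ simplifies to $\{p : \eta_p < r_i\}$, which has size $K_i$. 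The row-$2$ column of $r_i$ is $j_i$, which is already blocked (by $j$ for $i=0$, or by the prior non-trivial destination for $i\geq 1$), forcing a non-trivial match, and intermediate labels $h \in (r_{i+1}, r_i)$ contribute weight~$1$ as trivial matches.

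The most delicate verification is that the multiline's $\ell$ (row-$1$ balls in the open arc with nonzero label $< r_i$) equals the $t$-PushTASEP's $\ell_i$ (positions in the open arc with $\eta_p < r_i$). These can differ only at chain positions $p = j_m$ in the arc, where $\eta_{j_m} = r_m$ whereas the row-$1$ label at $j_m$ is $r_{m-1}$ (with the conventions $\eta_{j_k}=0$ and ``row-$1$ label at $j_0 = j$ is $0$''). The only potential crossover indices for the condition $<r_i$ are $m = i$ and $m = i+1$, but these correspond exactly to the endpoints $j_i$ and $j_{i+1}$, which are excluded from the open arc; and $m = k$ creates a discrepancy only when $i = k-1$, in which case $v = j_k = j_{i+1}$ is again the excluded endpoint. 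So the two counts always coincide, and multiplying the weights gives $\prod_h w_D(h,1) = \prod_{i=0}^{k-1} t^{\ell_i}/[K_i]_t$, as required.
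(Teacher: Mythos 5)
Your proof is correct and follows essentially the same route as the paper: both arguments reduce the claim to a term-by-term identification of the per-species factor $w_D(h,1)$ from the multiline weight with the corresponding factor in the $t$-PushTASEP jump probability, using the fact that conditionally on row $2$ the row-$1$ labelling is independent of the higher rows. The only difference is one of detail rather than of method: the paper simply asserts the equivalence of the two weight formulas, whereas you carry out the chain-following induction and the endpoint bookkeeping (in particular checking that the arc counts taken with respect to $\eta$, with respect to $\eta'$, and with respect to the row-$1$ ball labels all agree) explicitly.
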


\begin{proof}
Note that by the condition on $\lambda$, both row $2$ and row $1$ 
of the diagram have a single vacant site. 
	It follows from the constraint in \cref{defn:md} that 
the first quantity in \cref{lem:extra-row} is nonzero if and only if 
the second quantity in \cref{lem:extra-row} is nonzero. 

From \eqref{row probability} with $r=1$,
the conditional probability of obtaining a specific 
configuration $\eta'$ on row $1$, given the configuration $\eta$ on row $2$,
is given by 
	$$\prod_{\substack{h:1<h\leq s :\\m_h=1}}  w_D(h,1),$$
where $D$ is any diagram agreeing with $\eta$ and $\eta'$ on 
rows $2$ and $1$ respectively. 

But because of the equivalence of
\eqref{whdef} and \eqref{wDhdef}, that conditional probability is 
exactly the same as \eqref{jump probability}, which is 
the probability of obtaining $\eta'$ when a bell rings at 
site $j$ in the state $\eta$ under the $t$-PushTASEP dynamics.

In the special case where $\eta_j = \eta'_j=0$, i.e. 
both rows $1$ and $2$ of the multiline diagram $D$
have their unique vacancy in position $j$,
 all particles in $D$ must form a trivial match 
between rows $2$ and $1$, and the configuration in the two rows 
is identical. Correspondingly, under the $t$-PushTASEP dynamics,
if the bell rings at the site of an existing vacancy, then the system
stays in its current state. 
\end{proof}

\subsection{ASEP polynomials  from multiline diagrams}
The following result, which is a special case
of a result from \cite{corteel-mandelshtam-williams-2022}, 
relates the distribution 
of the bottom row of a multiline diagram with bottom row $\eta$ to 
the ASEP polynomial indexed by $\eta$.

\begin{thm}
[{\cite[Definition 1.9, Theorem 1.25, Lemma 1.26]{corteel-mandelshtam-williams-2022}}]
\label{thm:ASEP poly combi}
For any composition $\eta =(\eta_1,\dots,\eta_n)$
whose nonzero parts are distinct,
the ASEP polynomial $F_{\eta}(\mathbf{x}; 1, t)$ at $q=1$
can be computed in terms of multiline diagrams as follows:
\begin{equation}\label{multiline-f}
 F_{\eta}(\mathbf{x}; 1, t) = 
\sum_{D:\,\rho^{(1)}(D)=\eta} 
\wt(D) 
\end{equation}
\end{thm}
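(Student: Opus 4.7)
The plan is to derive \eqref{multiline-f} as a direct specialization of the general multiline queue formula of Corteel--Mandelshtam--Williams, so that the task reduces to a compatibility check rather than a fresh combinatorial argument. The first step is to verify that, when $\eta$ has distinct nonzero parts and we set $q=1$, the definitions of multiline diagrams and their weights given in \cref{defn:md} and \cref{def:weight} coincide precisely with those of \cite{corteel-mandelshtam-williams-2022}.

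For the underlying combinatorial objects, the CMW definition allows each row to carry several balls of the same label and builds the labelling via ``bully paths''. When all nonzero parts of $\lambda$ are distinct, each row carries at most one ball of each label, so every bully path linking rows $r+1$ and $r$ collapses to a single matching step between a labelled ball in row $r+1$ and an unlabelled ball in row $r$ -- exactly the rule of our recursive construction in \cref{sec:generate}. Hence the set of multiline diagrams of content $\lambda$ matches CMW's set of multiline queues of that content.

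For the weights, the $x$-contributions agree trivially, since both formulations assign $x_j$ to each ball in column $j$. For the $t$-contributions, the CMW weight associates a rational function of $q$ and $t$ to each nontrivial matching step between rows $r+1$ and $r$. I would check that this factor, after specialisation to $q=1$ and after summing geometrically over cyclic wrap-arounds (exactly as in the ``move with probability $1-t$, skip with probability $t$'' description used in the proof of \cref{prop:colouring}), reduces to $t^\ell/[K]_t$ with the same $\ell$ and $K$ as in \eqref{wDhdef}. Once this compatibility is established, \cref{thm:ASEP poly combi} follows immediately from \cite[Theorem~1.25 and Lemma~1.26]{corteel-mandelshtam-williams-2022}.

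The main obstacle is purely notational. CMW use reversed composition conventions and present the $t$-weight globally via bully paths, rather than via the per-label matching factors we use, so care is needed in identifying the correct cyclic interval whose balls contribute to the exponent $\ell$ of $t$ and the correct count $K$ appearing in the denominator $[K]_t$. Should this translation prove delicate, an alternative route is to argue directly: show that the right-hand side of \eqref{multiline-f} defines a KZ family in $\eta$ and coincides with $E_\lambda(\mathbf{x};1,t)$ when $\eta=\lambda$; then by the uniqueness of such a family (\cref{def:ASEPpolynomials} at $q=1$, via the eigenvalue argument already used in the proof of \cref{prop:Gmu}) the right-hand side equals $F_\eta(\mathbf{x};1,t)$.
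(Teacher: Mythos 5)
Your proposal is correct and matches the paper's treatment: the paper does not prove this statement from scratch but imports it as a special case of \cite[Theorem 1.25, Lemma 1.26]{corteel-mandelshtam-williams-2022}, having already set up \cref{defn:md} and \cref{def:weight} as the $q=1$, distinct-parts specialization of the CMW definitions, which is exactly the compatibility check you describe. The alternative KZ-family route you sketch is a reasonable fallback but is not needed and is not what the paper does.
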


\begin{rem}
The result from \cite{corteel-mandelshtam-williams-2022}
is more general
than \cref{thm:ASEP poly combi} because it holds for 
any composition and for general $q$. However, we do not need
the more general version in this paper.
\end{rem}

\begin{rem}
Let	$\lambda=(\lambda_1,\dots,\lambda_n)=
	\langle s^{m_s},\dots,0^{m_0} \rangle$.
The ASEP polynomials appearing as numerators of stationary probabilities in \cref{thm:multilrep-rej-statprob} are polynomials in $\mathbf{x}$ with coefficients which are rational functions (but not necessarily polynomials) 
in
 $t$. 
Using the connection with multiline diagrams,
one can show that these probabilities can be rewritten with numerators  that are polynomials in both 
$\mathbf{x}$ and $t$ and with common denominator given by
	\begin{equation}\label{denom}
	P_\lambda(\mathbf{x}; 1, t)
\prod_{i = 1}^s \frac{[m_1 + \cdots + m_i]_t!}{[m_i]_t!}  = 
	e_{\lambda'}(\mathbf{x})
\prod_{i = 1}^s \frac{[m_1 + \cdots + m_i]_t!}{[m_i]_t!}.
	\end{equation} 
In the case of  \cref{eg:210 probs}, this 
common denominator is
\[
(t+1)\left(x_1+x_2+x_3\right) \left(x_1 x_2 + x_1 x_3 + x_2 x_3\right).
\]
The factor in \eqref{denom} involving $t$-factorials is the same as that in \cite{martin-2020} for the ASEP.
\end{rem}

\section{Proof of \cref{thm:multilrep-rej-statprob}}
\label{sec:proof}

In this section we prove 
 \cref{thm:multilrep-rej-statprob}.  We start by proving it in the case where $\lambda$ has distinct parts,
 and then we generalize it to the case of repeated parts, using properties of ASEP polynomials 
 at $q=1$.

\subsection{Proof of \cref{thm:multilrep-rej-statprob}
when $\lambda$ has distinct parts}
\label{sec:distinct types}

\begin{lem}
\label{lem:bottom two lines}
Let $\lambda$ be a partition with distinct entries
and no entry equal to $1$. Consider a 
random multiline diagram $D$ with content $\lambda$, 
with distribution proportional to the weight $\wt(D)$.
The configurations given by the bottom row (row $1$) 
and by the next-to-bottom row (row $2$) 
have the same distribution.
\end{lem}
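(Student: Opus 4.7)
The plan is to compute both marginals $\mathbb{P}(\rho^{(1)}=\eta)$ and $\mathbb{P}(\rho^{(2)}=\eta)$ as explicit ratios of ASEP polynomials at $q=1$, and then deduce their equality from the identities of \cref{sec:symm-fn}. The hypotheses on $\lambda$ (distinct parts, no part equal to $1$, a single part equal to $0$) force $m_1=0$ and $m_0=1$, so rows $1$ and $2$ both contain $n-1$ balls with one vacancy each and share the label set $\{r\geq 2:m_r=1\}$; hence both configurations lie in $S_\lambda$ and the two marginals are directly comparable.

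The row-$1$ marginal is immediate from \cref{thm:ASEP poly combi}:
\[
\mathbb{P}(\rho^{(1)}=\eta) \;=\; \frac{F_\eta(\mathbf{x};1,t)}{P_\lambda(\mathbf{x};1,t)}.
\]
For the row-$2$ marginal, I would factor $\wt(D) = \bigl[\wt_x(\text{row }1)\prod_h w_D(h,1)\bigr]\cdot \wt(D^*)$, where $D^*$ is the sub-diagram on rows $2$ through $s$, and sum first over row $1$ with row $2$ held fixed. For each vacancy position $j_1$ in row $1$, the matching weights $\prod_h w_D(h,1)$ form a probability distribution over the labelings of row $1$ and hence sum to $1$; summing $\prod_{i\neq j_1} x_i$ over $j_1$ then produces $e_{n-1}(\mathbf{x})$. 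The relabeling $h\mapsto h-1$ for $h\geq 2$ -- well-defined precisely because $m_1=0$ -- identifies $D^*$ with a valid $(s-1)$-row multiline diagram of content $\lambda^*$ and bottom row $\eta^*$, at the same weight. A second application of \cref{thm:ASEP poly combi} yields
\[
\mathbb{P}(\rho^{(2)}=\eta) \;=\; \frac{e_{n-1}(\mathbf{x})\,F_{\eta^*}(\mathbf{x};1,t)}{P_\lambda(\mathbf{x};1,t)}.
\]

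Equality of the two marginals thus reduces to the polynomial identity $F_\eta = e_{n-1}(\mathbf{x})\,F_{\eta^*}$ at $q=1$. By \cref{cor:AS}, $F_\eta/F_{\tilde\eta} = h_\lambda = e_{\lambda'}/e_{\tilde\lambda'}$ and similarly for $\eta^*$; since the relabeling is order-preserving, $\tilde\eta=\tilde{\eta^*}$ and $\tilde\lambda=\tilde{\lambda^*}$, so $F_\eta/F_{\eta^*} = e_{\lambda'}/e_{(\lambda^*)'}$. A short computation with conjugate partitions shows $(\lambda^*)'_k = \lambda'_{k+1}$, while $\lambda'_1$ equals the number of positive parts of $\lambda$, which is $n-1$; therefore $e_{\lambda'}/e_{(\lambda^*)'} = e_{n-1}(\mathbf{x})$, completing the argument. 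The main obstacle is this final algebraic identity: the factor $e_{n-1}(\mathbf{x})$ arising from the row sum is a soft normalization observation, whereas verifying that the ratio of conjugate partitions produces exactly $e_{n-1}(\mathbf{x})$ is where the structural hypotheses on $\lambda$ (especially the absence of a part equal to $1$) are used essentially.
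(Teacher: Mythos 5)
Your proof is correct and follows essentially the same route as the paper's: both arguments identify rows $2$ through $s$ of $D$ (after the relabelling $h\mapsto h-1$) with a multiline diagram of content $\phi(\lambda)$, and both invoke \cref{thm:ASEP poly combi} together with \cref{cor:AS} to compare the two marginals. The only cosmetic difference is that you make the row-$1$ summation and the resulting identity $F_\eta = e_{n-1}(\mathbf{x})\,F_{\phi(\eta)}$ explicit, whereas the paper phrases the same facts probabilistically by comparing $D$ with an independent diagram $D'$ of content $\phi(\lambda)$.
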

\begin{proof}
Let $\phi$ be the weakly order-preserving function 
given by $\phi(x)=x-1$ for all $x\geq 1$ and $\phi(0)=0$.
If $s$ is the largest entry of $\lambda$, then
$\phi(\lambda)$ has largest entry $s-1$.

Let $D$ be a random multiline diagram with
content $\lambda$, with distribution proportional
to weight. The diagram $D$ has $s$ rows.
Recall that $\rho^{(1)}$ denotes the sequence of balls in the bottom row of $D$, and $\rho^{(2)}$ denotes the sequence of balls in row $2$
of $D$.

Let $D'$ be a random multiline diagram 
with content $\phi(\lambda)$, again distributed
proportional to weight. The diagram $D'$ has
$s-1$ rows.

In view of the generation process from
 \cref{sec:generate}, 
 if we take rows $s$ down to $2$ of 
the diagram $D$, and subtract $1$ from the label
of every ball, then the resulting diagram
has  distribution identical to that of the diagram $D'$.
In particular, comparing row $2$ of $D$ to row $1$ of $D'$, we have that
$\phi(\rho^{(2)}(D))$ and $\rho^{(1)}(D')$ have the same distribution.

But we may instead compare row $1$ of $D$ to 
row $1$ of $D'$.
By \cref{thm:ASEP poly combi}, 
for 
$\eta\in S_\lambda$, 
the probability that $\rho^{(1)}(D)=\eta$ equals 
	$\frac{\ASEP_\eta(\mathbf{x};1,t)}{\sum_{\tau \in S_{\lambda}} \ASEP_{\tau}(\mathbf{x};1,t)}$,
and 
	the probability that $\rho^{(1)}(D')=\phi(\eta)$ 
equals
	$\frac{\ASEP_{\phi(\eta)}(\mathbf{x};1,t)}{\sum_{\nu \in S_{\phi(\lambda)}} \ASEP_{\nu}(\mathbf{x};1,t)}$.
But $\eta$ and $\phi(\eta)$ have the
same weak standardisation, so by 
\cref{cor:AS}, these two probabilities are equal.
It follows
that $\phi(\rho^{(1)}(D))$ has the same distribution
as $\rho^{(1)}(D')$. 

We have proved that both $\phi(\rho^{(1)}(D))$ and 
$\phi(\rho^{(2)}(D))$ have the same distribution
as $\rho^{(1)}(D')$. But $\phi$ is a bijection from
$S_\lambda$ to $S_{\phi(\lambda)}$. So in 
fact $\rho^{(1)}(D)$ and $\rho^{(2)}(D)$ have the same
distribution, as required.
See \cref{fig:bottom two lines} for an illustration.
\end{proof}

\begin{center}
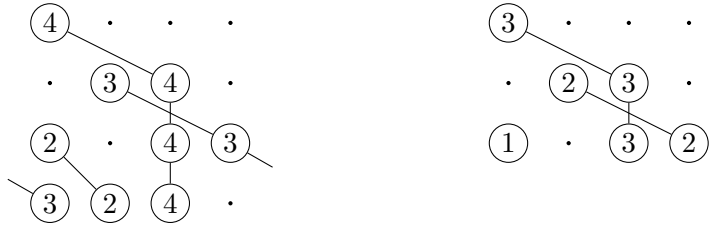
\begin{figure}[h!]

\begin{tikzpicture}[scale=0.8]
\pgfdeclarelayer{background}
\pgfdeclarelayer{foreground}
\pgfsetlayers{background,main,foreground} 

\begin{pgfonlayer}{background}
    \newcommand\rows{4}
    \newcommand\cols{4}

    \foreach \row in {1,...,\rows} {
        \foreach \col in {1,...,\cols} {
            \fill (\col,\row) circle (1pt); 
        }
    }
\end{pgfonlayer}

\node[draw,circle,fill=white, inner sep=2pt] at (1,4) (ball4-4) {4};
\node[draw,circle,fill=white, inner sep=2pt] at (3,3) (ball4-3) {4};
\node[draw,circle,fill=white, inner sep=2pt] at (3,2) (ball4-2) {4};
\node[draw,circle,fill=white, inner sep=2pt] at (3,1) (ball4-1) {4};

\node[draw,circle,fill=white, inner sep=2pt] at (2,3) (ball3-3) {3};
\node[draw,circle,fill=white, inner sep=2pt] at (4,2) (ball3-2) {3};
\node[draw,circle,fill=white, inner sep=2pt] at (1,1) (ball3-1) {3};

\node[draw,circle,fill=white, inner sep=2pt] at (1,2) (ball2-2) {2};
\node[draw,circle,fill=white, inner sep=2pt] at (2,1) (ball2-1) {2};

\draw (ball4-4) -- (ball4-3);
\draw (ball4-3) -- (ball4-2);
\draw (ball4-2) --(ball4-1);

\draw (ball3-3) -- (ball3-2);
\draw (ball3-2) -- (4.7,1.6);
\draw (0.3,1.4) -- (ball3-1);

\draw (ball2-2) -- (ball2-1);

\begin{scope}[xshift=0.5\linewidth]
\begin{pgfonlayer}{background}
    \newcommand\rows{3}
    \newcommand\cols{4}

    \foreach \row in {1,...,\rows} {
        \foreach \col in {1,...,\cols} {
            \fill (\col,\row+1) circle (1pt); 
        }
    }
\end{pgfonlayer}
\node[draw,circle,fill=white, inner sep=2pt] at (1,4) (ball3-3) {3};
\node[draw,circle,fill=white, inner sep=2pt] at (3,3) (ball3-2) {3};
\node[draw,circle,fill=white, inner sep=2pt] at (3,2) (ball3-1) {3};

\node[draw,circle,fill=white, inner sep=2pt] at (2,3) (ball2-2) {2};
\node[draw,circle,fill=white, inner sep=2pt] at (4,2) (ball2-1) {2};

\node[draw,circle,fill=white, inner sep=2pt] at (1,2) (ball1-1) {1};

\draw (ball3-3) -- (ball3-2);
\draw (ball3-2) -- (ball3-1);

\draw (ball2-2) -- (ball2-1);

\end{scope}

\end{tikzpicture}

\caption{
On the left, a multiline diagram $D$ with content
$\lambda=(4,3,2,0)$. On the right, a multiline diagram $D$
with content $\phi(\lambda)=(3,2,1,0)$ 
(where $\phi$ is defined as in the proof of 
\cref{lem:bottom two lines}). The configurations
$\rho^{(2)}(D)$ and $\rho^{(1)}(D)$ (the two lowest
rows of $D$) have the same distribution, 
and the distribution of $\phi(\rho^{(1)}(D))$ 
and of $\phi(\rho^{(2)}(D))$ is the same as that of $\rho^{(1)}(D')$.
\label{fig:bottom two lines}}
\end{figure}
\end{center}

Write $p_j(\eta, \eta')$ for the 
probability of obtaining $\eta'$ from $\eta$ 
using the $t$-PushTASEP jump dynamics when a bell rings at site $j$.

Now we average over $j$. Specifically, we take 
a weighted average of $p_j$ with weights proportional 
to $x_j^{-1}$:
\begin{equation}
\label{pdef}
p(\eta, \eta')
=
\sum_j
\frac{x_j^{-1}}{x_1^{-1}+x_2^{-1}+\dots+x_n^{-1}}
p_j(\eta, \eta').
\end{equation}

Since $x_j^{-1}$ is the rate at which the bell at site $j$ in the $t$-PushTASEP, we have that 
$p$ defined by \eqref{pdef} gives the transition probabilities of a 
discrete-time Markov chain whose stationary distribution 
is the same as that of the continuous-time $t$-PushTASEP. 

By \cref{lem:extra-row}, $p_j(\eta, \eta')$ also gives
the probability of obtaining $\eta'$ as row $1$
of a multiline diagram with content $\lambda$, given that row 
$2$ is $\eta$ and that the vacancy in the bottom row is at site $j$. But the probability of having a vacancy at $j$ is proportional to $x_j^{-1}$, independently of the rows above, so $p(\eta, \eta')$ defined by \eqref{pdef} is the probability of obtaining $\eta'$ as row $1$ of the diagram, given that row $2$ is $\eta$.

We also know from \cref{lem:bottom two lines}
that the distributions of row $1$ and row $2$ are the same. 
Since row $1$ is obtained from row $2$ by a single update of
the dynamics $p$, this tells us that their common distribution is a stationary distribution for $p$, and hence also for the 
continuous-time $t$-PushTASEP. 

But this common distribution is also proportional 
to the ASEP polynomials. So we obtain that 
$\pi_{\lambda}(\eta), \eta\in S_\lambda$ is proportional to $F_\eta$ as required to give the result
of \cref{thm:multilrep-rej-statprob} in this case.

\subsection{Proof of \cref{thm:multilrep-rej-statprob} in the general case}

To extend the result from the previous section
to prove the general case of \cref{thm:multilrep-rej-statprob}, we apply the results from \cref{sec:symm-fn}.

Consider some partition $\mu$ (as ever, assumed to have
at least one entry $0$). We can find some $\lambda$ 
satisfying the conditions 
of \cref{sec:distinct types} 
(i.e.\ having distinct entries, no entry $1$, and
exactly one entry $0$) and some 
weakly order-preserving function $\phi$, 
such that $\mu=\phi(\lambda)$. 
\cref{prop:colouring-stationary}
then tells us
that 
\begin{equation}\label{eq:sum}
	\pi_{\mu}(\eta)
=
\sum_{\zeta\in S_\lambda: \phi(\zeta)=\eta}
	\pi_{\lambda}(\zeta).
\end{equation}

{But we also know from 
\cref{thm:multilrep-rej-statprob} (which we now know holds 
in the case that $\lambda$ has distinct parts)
that for $\zeta\in S_\lambda$, 
$$\pi_{\lambda}(\zeta) = 
\frac{\ASEP_\zeta(\mathbf{x};1, t)}{P_\lambda(\mathbf{x}; 1, t)}.$$
We then obtain from \eqref{eq:sum} that 
$$\pi_{\mu}(\eta)
=
\sum_{\substack{\zeta\in S_\lambda \\ \phi(\zeta)=\eta}}
\frac{\ASEP_\zeta(\mathbf{x};1, t)}{P_\lambda(\mathbf{x}; 1, t)}.$$
Now by \eqref{eq:G} we have that 
$$\pi_{\mu}(\eta)
=
\frac{G_\eta(\mathbf{x}, t)}{P_\lambda(\mathbf{x}; 1, t)}.$$
Finally using \cref{prop:f-projection}, we 
 obtain that 
$$\pi_{\mu}(\eta)
=
\frac{F_\eta(\mathbf{x}, t)}{P_\mu(\mathbf{x}; 1, t)},$$ as desired.
This completes the proof of \cref{thm:multilrep-rej-statprob}.}

\begin{proof}[Proof of \cref{thm:obs}]
Let $A\subset S_\lambda$ be any event (i.e.\ collection of configurations)
that is conserved by exchanging the contents of sites $i$ and $i+1$; that is, $A=s_i A = \{s_i\eta \mid \eta\in A\}$.

Since $s_i$ is a bijection on $S_\lambda$, we then have
$\sum_{\eta\in A} F_\eta = \sum_{\eta\in A} F_{s_i\eta}$,
giving
\[
\sum_{\eta\in A} F_\eta = \frac12 \sum_{\eta\in A} 
\left(F_\eta+F_{s_i\eta}\right).
\]
By \cref{lem:ASEP pol sym}, this quantity is symmetric 
in $x_i$ and $x_{i+1}$.

Now suppose $A$ depends only on the configuration in sites 
$1,2,\dots, k$. Then the above holds for any $i$ with $k<i<n$. 
We obtain that for any such $i$, the probability of $A$
in the stationary distribution,
\[
p^{(\lambda)}(A)
:= 
\sum_{\eta\in A} p^{(\lambda)}_\eta
=
\sum_{\eta\in A} 
\frac{F_\eta(x_1,\dots, x_n; 1, t)}
{P_\lambda(\mathbf{x}; 1, t)},
\]
is symmetric in $x_i$ and $x_{i+1}$
(since the Macdonald polynomial in the denominator is 
symmetric). But then the probability is in fact symmetric in 
all of $x_{k+1}, \dots, x_n$. 
This gives the symmetry required for \cref{thm:obs}.
\end{proof}

\section{Formulas for density and currents}
\label{sec:current}

In this section we discuss the density of particles and the current in the 
$t$-PushTASEP.
In particular, we give a formula for the 
density of particles in \cref{cor:dens}.  We also prove 
\cref{thm:curr-singlePushTASEP}, which gives a formula for the current
in the single species case.  We end with a discussion
of the current in the multispecies case.

Let us consider the single species $t$-PushTASEP first.  
By \cref{prop:ss-singlePushTASEP},
the 
density {(in the stationary distribution)} of particles is the same as for the PushTASEP at $t=0$.
As we will explain, 
this continues to hold for the multispecies $t$-PushTASEP.
Let $\eta^{(i)}_j$ denote the occupation variable for the particle of species $i$ at site $j$, i.e $\eta^{(i)}_j = 1$ (resp. $\eta^{(i)}_j=0$) provided the $j$'th site is occupied (resp. not occupied) by $i$. 
The formula for the density of particles in the multispecies PushTASEP is obtained directly from that in the single species case:
the density of the particle of species $j$ is the density of the particle of species $1$ in the single species PushTASEP with $m_j+\cdots+m_s$ particles minus the density of the particle of species $1$ in the single species PushTASEP with $m_{j+1}+\cdots+m_{s}$ particles. 
The formula for the density,
shown in \cref{cor:dens} below,
is the same as given for the $t=0$ case
in \cite[Proposition 18]{ayyer-martin-2023}.
The proof is identical and is omitted.

\begin{cor}
\label{cor:dens}
The density of species $r$ at the first site in the multispecies $t$-PushTASEP with content $\lambda = \langle s^{m_s},\dots,0^{m_0} \rangle$ on $n = \sum_i m_i$ sites is given by
\[
\aver{\eta^{(r)}_1} = x_1 \frac{s_{\langle 2^{a_{r+1}}, 1^{m_r-1} \rangle}(x_2,\dots,x_n)}
{e_{a_r}(\mathbf{x}) e_{a_{r+1}}(\mathbf{x})},
\]
where $s_\mu$ is the Schur polynomial indexed by $\mu$, and $a_r = m_r + \cdots + m_s$ for $1 \leq r \leq s$.
\end{cor}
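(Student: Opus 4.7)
The plan is to reduce the multispecies computation to the single-species case using the recolouring property (\cref{prop:colouring-stationary}), and then to identify the resulting difference of elementary symmetric polynomials as a Schur polynomial using the Jacobi--Trudi identity.

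More precisely, for each $r$ I would consider the weakly order-preserving map $\phi_r:\NN\to\NN$ sending every species $\geq r$ to $1$ and every species $<r$ to $0$. By \cref{prop:colouring-stationary}, the projected $t$-PushTASEP under $\phi_r$ has content $\langle 1^{a_r},0^{n-a_r}\rangle$, and in the projected stationary distribution the event $\{\eta_1^{\geq r}=1\}$ corresponds exactly to $\{\eta_1\geq r\}$ in the original system. Since $\aver{\eta_1^{(r)}}=\aver{\mathbf{1}[\eta_1\geq r]}-\aver{\mathbf{1}[\eta_1\geq r+1]}$, it therefore suffices to compute the density at site $1$ in a single-species system.

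For the single-species $t$-PushTASEP with $k$ particles, \cref{prop:ss-singlePushTASEP} gives the stationary measure $\pi(\eta)=\frac{1}{e_k(\mathbf{x})}\prod_{i:\eta_i=1}x_i$, so summing over all configurations with $\eta_1=1$ yields $\aver{\eta_1=1}=\frac{x_1\,e_{k-1}(x_2,\dots,x_n)}{e_k(x_1,\dots,x_n)}$. Applying this with $k=a_r$ and $k=a_{r+1}$ and subtracting, I obtain
\[
\aver{\eta_1^{(r)}}=\frac{x_1\bigl(e_{a_r-1}(\hat{\mathbf{x}})\,e_{a_{r+1}}(\mathbf{x})-e_{a_{r+1}-1}(\hat{\mathbf{x}})\,e_{a_r}(\mathbf{x})\bigr)}{e_{a_r}(\mathbf{x})\,e_{a_{r+1}}(\mathbf{x})},
\]
where $\hat{\mathbf{x}}=(x_2,\dots,x_n)$.

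The final step is to recognise the numerator bracket as the Schur polynomial $s_{\langle 2^{a_{r+1}},1^{m_r-1}\rangle}(\hat{\mathbf{x}})$. The conjugate of this partition is $(a_r-1,a_{r+1})$, so the dual Jacobi--Trudi identity gives $s_{\langle 2^{a_{r+1}},1^{m_r-1}\rangle}(\hat{\mathbf{x}})=e_{a_r-1}(\hat{\mathbf{x}})\,e_{a_{r+1}}(\hat{\mathbf{x}})-e_{a_r}(\hat{\mathbf{x}})\,e_{a_{r+1}-1}(\hat{\mathbf{x}})$. Using the one-variable splitting $e_k(\mathbf{x})=e_k(\hat{\mathbf{x}})+x_1 e_{k-1}(\hat{\mathbf{x}})$, the cross terms involving $x_1$ cancel in the bracket above, leaving exactly the Jacobi--Trudi determinant and yielding the claimed formula. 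No step is a serious obstacle; the only thing to be careful about is keeping track of the distinction between $\mathbf{x}$ and $\hat{\mathbf{x}}$ and invoking the correct (dual) form of Jacobi--Trudi, since \cref{prop:ss-singlePushTASEP} already does all the probabilistic work.
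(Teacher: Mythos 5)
Your proposal is correct and follows essentially the same route the paper indicates: project to the single-species system via the recolouring property, take the difference of the occupation probabilities for $a_r$ and $a_{r+1}$ particles computed from \cref{prop:ss-singlePushTASEP}, and identify the resulting bracket with the Schur polynomial via the dual Jacobi--Trudi identity for the conjugate partition $(a_r-1,a_{r+1})$. The paper omits the details (deferring to the identical $t=0$ argument in the companion paper), but your cancellation of the $x_1$ cross terms using $e_k(\mathbf{x})=e_k(\hat{\mathbf{x}})+x_1e_{k-1}(\hat{\mathbf{x}})$ checks out exactly.
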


We now move on to studying the current (at stationarity) for the single species $t$-PushTASEP.
Let $\lambda = \langle 1^{m_1}, 0^{m_0} \rangle$, where
$m_1+m_0=n$, so that we are studying a system
with $m_1$ particles and $m_0$ vacancies. 
Recall that the \emph{current (at stationarity)} of a 
particle across a given edge (say $(n,1)$) 
of the lattice is the number of particles per 
unit time that cross that edge in the long-time limit. 
Because of particle conservation, the current 
is independent of the edge.  We will denote the stationary
current in our system of particles by $J_{m_0, m_1}$.

In terms of the stationary distribution for the $t$-PushTASEP, 
we can compute $J_{m_0,m_1}$  as follows. If a particle is at position $k$ and there is a vacancy at position $j < k$, then the particle at $k$ can make a transition to $j$, and contributes to the current across the edge $(n,1)$ in doing so. If there are $h$ vacancies in sites $k+1, \dots, n, 1, \dots, j-1$, then the rate of this transition is $t^h/x_k$. Formally, we can write the current as
\begin{equation}
\label{def-curr}
J_{m_0, m_1} 
= \sum_{h = 0}^{m_0 - 1} \sum_{j = 1}^{n-m_0+h} \sum_{k = j+1}^n
\frac{t^h}{x_k} 
	\sum_{\eta}
	\pi(\eta),
\end{equation}
where the sum on the right is over all 
$$\{\eta \in S_{\langle 1^{m_1},0^{m_0} \rangle} \ \vert \ 
\eta_k=1, \eta_j=0, \text{ and }\eta \text{ has $h$ vacancies in 
sites $k+1,\dots, n,1 \dots  j-1$}\}.$$

The formula we need to prove \cref{thm:curr-singlePushTASEP} is the following identity for elementary symmetric functions, which seems to be new.

\begin{lem}
\label{lem:e-identity}
Fix $n$ and $m_0 < n$ positive integers. Then, for all $0 \leq h \leq m_0 - 1$, we have
\begin{multline}
\label{e-identity}
 (h+1) e_{n-m_0-1}(x_1,\dots,x_n) = \\
\sum_{a = m_0 - h}^{n-1} \sum_{j = 1}^{n-a}
e_{h-m_0+a}(x_{j+1}, \dots, x_{j+a-1}) 
\times e_{n-h-1-a}(x_{j+a+1}, \dots, x_n, x_1, \dots, x_{j-1}). 
\end{multline}
\end{lem}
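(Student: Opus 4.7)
The plan is to prove \cref{lem:e-identity} by matching the coefficients of squarefree monomials on both sides. Both sides are homogeneous of degree $n-m_0-1$ and multilinear in $x_1,\dots,x_n$ (each is a sum of products of elementary symmetric polynomials), so it suffices to verify that for every $S\subseteq[n]$ with $|S|=n-m_0-1$, the coefficient of $\prod_{i\in S}x_i$ agrees on both sides. On the LHS this coefficient is visibly $h+1$, so the real content lies in computing the RHS coefficient.

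Given such an $S$, write $T:=[n]\setminus S$ (so $|T|=m_0+1$) and think of $T$ as a set of marked sites on the cyclic lattice $\mathbb{Z}/n\mathbb{Z}$. The claim I will verify is that the RHS coefficient of $\prod_{i\in S}x_i$ equals the number of ordered pairs $(j,k)$ with $j,k\in T$, $j<k$, such that the cyclic arc from $k$ clockwise back to $j$ contains exactly $h$ points of $T$. Indeed, to extract $\prod_{i\in S}x_i$ from the RHS term indexed by $(a,j)$ with $k:=j+a$, one needs $j,k\notin S$ (so $j,k\in T$), and the two elementary symmetric factors must select exactly the subsets $S\cap\{j+1,\dots,k-1\}$ and $S\cap\{k+1,\dots,n,1,\dots,j-1\}$. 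Comparing cardinalities yields $|T\cap\{j+1,\dots,k-1\}|=m_0-h-1$ and $|T\cap\{k+1,\dots,n,1,\dots,j-1\}|=h$, which is exactly the cyclic arc condition. A quick check shows that any such pair automatically satisfies $a\in[m_0-h,n-1]$ and $j\in[1,n-a]$, matching the summation bounds on the RHS.

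It remains to count these pairs for a fixed $T$. Listing $T=\{t_1<\cdots<t_{m_0+1}\}$ and writing $j=t_i$, $k=t_{i'}$ with $i<i'$, the number of elements of $T$ in the cyclic arc from $k$ back to $j$ equals $m_0-(i'-i)$. The condition thus becomes $i'-i=m_0-h$, and combined with $1\le i<i'\le m_0+1$ this forces $i\in\{1,2,\dots,h+1\}$, giving exactly $h+1$ admissible pairs, as required.

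I do not expect any serious obstacle: the argument is essentially combinatorial bookkeeping. The only place where care is warranted is in correctly translating the ``picked'' indices inside each elementary symmetric factor (which lie in $S$) to their complements in the two arcs (which lie in $T$), so that the condition ends up naturally phrased in terms of the cyclic structure on $T$; once this translation is in place, the arithmetic of the index ranges, and the final count of $h+1$ pairs, is immediate.
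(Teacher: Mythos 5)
Your proof is correct and follows essentially the same strategy as the paper's: extract the coefficient of each squarefree monomial $\prod_{i\in S}x_i$, reparametrize the double sum by the pair $(j,k)=(j,j+a)$ of positions in the complement $T=[n]\setminus S$, and count the admissible pairs. Your final count — indexing $T=\{t_1<\cdots<t_{m_0+1}\}$ and reducing the arc condition to $i'-i=m_0-h$, which visibly has exactly $h+1$ solutions — is in fact a cleaner and more complete version of the paper's somewhat informal case analysis, so no changes are needed.
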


\begin{proof}
For convenience, set $m_1 = n - m_0$. 
We need to show that every monomial in $e_{m_1-1}$ occurs on the right hand side exactly $h+1$ times.
So fix a subset $S = \{1 \leq s_1 < \cdots < s_{m_1-1} \leq n\}$ and consider the summand in the right hand side of \eqref{e-identity}. It depends on two parameters $j$ and $a$ and we set $k = j+a$. Thus, the sum depends on two parameters, $j$ and $k$ instead.
The first factor in the summand is an elementary symmetric function depending on the variables strictly between $j$ and $k$, and the second is one depending on variables between $k$ and $j$ counted cyclically. Therefore, we must choose $j$ and $k$ to be in the set $[n] \setminus S$ such that there are $h+k-j-m_0$ elements of $S$ between $j$ and $k$. To complete the proof, it will suffice to show that there are exactly $h+1$ many choices.

To make the argument easier to follow, let us first consider the case where $h = m_0 - 1$. Then, we have to choose $j$ and $k$ so that there is no element of $[n] \setminus S$ strictly between them. There is exactly one way of choosing $j \in [1, s_1 - 2]$ and that is with $k = j+1$. We can also choose $j = s_1 - 1$ with $k = s_1 + 1$. Thus, $j$ can be chosen to be any position between $1$ and $s_1 - 1$ in exactly one way. Similarly, $j$ can be chosen to be any position between $s_i+1$ and $s_{i+1} - 1$ in one way, for $1 \leq i \leq m_1 - 2$. Lastly, $j$ can be chosen to be any element between 
$s_{m_1-1}+1$ and $n - 1$ with $k = j+1$. Summing all of these possibilities, we get $n - m_1 = m_0 = h+1$, which is independent of $S$.

The argument for general $h$ goes the same way. Between each $s_i+1$ and $s_{i+1} - 1$, there is exactly one way of choosing $k$ so that there are $h+k-j-m_0$ elements of $S$ between $j$ and $k$, for small values of $i$. The change occurs at the end as we get closer to $n$. Every time the value of $h$ increases by $1$, the number of possibilities of $j$ reduces exactly by $1$. It is easy to see that this argument is independent of the choice of $S$.

As a sanity check, consider the case of $h = 0$. In that case, the only possibility is to choose $j$ (resp. $k$) to be the smallest (resp. largest) element of $[n] \setminus S$, which is consistent with what we want to prove.
\end{proof}

\begin{proof}[Proof of \cref{thm:curr-singlePushTASEP}]
The current is independent of the edge being considered. So look at the edge connecting $n$ to $1$. For a particle hop to count towards the current across this edge, it must hop from a site $k \in [2, n]$ to a vacant site $j < k$.
If there are $h$ holes between the sites $k+1$ to $j-1$ (of which there are $n-k+j-1$ many), then the rate of this transition is $t^{h}/([m_0] x_k)$. Therefore the total stationary weight of these configurations is 
$e_{n-h-1-k+j}(x_{k+1}, \dots, x_n, x_1, \dots, x_{j-1})$.
Similarly, there are $m_0-h-1$ holes between the $k-j-1$ sites between $j+1$ and $k-1$ and so the total stationary weight of such configurations is
$e_{h-m_0+k-j}(x_{j+1}, \dots, x_{j+a-1})$. Summing over all possible values of $j$ and $k$, we see that the current is
\begin{multline*}
J_{m_0, m_1} = \sum_{h = 0}^{m_0 - 1} \sum_{j = 1}^{n-m_0+h} 
\sum_{k = j+1}^n \frac{t^h}{[m_0]}
\frac{e_{h-m_0+k-j}(x_{j+1}, \dots, x_{j+a-1})}
{e_{m_1}(x_1,\dots,x_{n})} \\
\times e_{n-h-1-k+j}(x_{k+1}, \dots, x_n, x_1, \dots, x_{j-1}).
\end{multline*} 
Now substituting $a =j-k$ and using \cref{lem:e-identity}, we arrive at
\[
J_{m_0, m_1}  = \frac{e_{m_1-1}(x_1,\dots, x_{n}) }{e_{m_1}(x_1,\dots,x_{n})} \sum_{h = 0}^{m_0 - 1}  \frac{(h+1)t^h}{[m_0]},
\]
which gives the desired result.
\end{proof}

Now we would like to compute the current for the multispecies case. The current of species $j$ in the $t$-PushTASEP on $\Omega_\lambda$ is the difference of the total currents of species $j$ through $s$ minus the total currents of species $j+1$ through $s$. Following the argument in \cite[Proposition 19]{ayyer-martin-2023}, we would like to calculate both these in terms of the single species $t$-PushTASEP using \cref{thm:curr-singlePushTASEP} and \cref{prop:colouring}. 
Unfortunately, this does not work as in the $t = 0$ case if $j < s$. The main reason is that an edge can contribute towards multiple currents in a single transition when $t > 0$. 

We illustrate this with the example of $\lambda = (2, 1, 0)$ shown in \cref{fig:eg123t}. 
Consider the current of species $1$ across the edge $(3, 1)$. Using the colouring argument, we would get this current to be
\[
J_{1,2} - J_{2, 1}
= \frac{e_1(x_1, x_2, x_3)}{e_1(x_1, x_2, x_3)}
= \frac{1 + 2t}{1 + t} \frac{e_0(x_1, x_2, x_3)}
{e_1(x_1, x_2, x_3)},
\]
which gives, after some manipulations,
\[
J_{1,2} - J_{2, 1}
= 
\frac{(1+t)(x_1^2 + x_2^2 + x_3^2)
+ (x_1 x_2 + x_1 x_3 + x_2 x_3)}
{(1+t) e_{2,1}(x_1, x_2, x_3)}.
\]
Now let us calculate the current by brute force. Particle $1$ jumps across the edge $(3, 1)$ only for the following states when either the $1$ jumps, or when the $2$ jumps displacing the $1$:
\begin{itemize}
\item $(0,1,2)$,
\item $(0,2,1)$, 
\item $(2,0,1)$.
\end{itemize}
The sum of these contributions gives
\begin{multline*}
\frac{x_2 x_3}{e_{2,1}(x_1, x_2, x_3)} 
\left(\frac{x_1 t}{1+t} + x_3 \right) 
\left(\frac{1}{x_2} + \frac{t}{x_3(1+t)} \right) \\
+ \frac{x_2 x_3}{e_{2,1}(x_1, x_2, x_3)} 
\left(\frac{x_1}{1+t} + x_2\right)
\left(\frac{1}{x_3} + \frac{1}{x_2(1+t)} \right) \\
+ \frac{x_1 x_3}{e_{2,1}(x_1, x_2, x_3)} 
\left(x_1 + \frac{x_2 t}{1+t} \right)
\left(\frac{1}{x_3} + \frac{t}{x_1(1+t)} \right),
\end{multline*}
which after simplifying becomes
\[
\frac{(1+t)^2(x_1^2 + x_2^2 + x_3^2)
+ (1+2t+2t^2)(x_1 x_2 + x_1 x_3 + x_2, x_3)}
{(1+t)^2 e_{2,1}(x_1, x_2, x_3)},
\]
and this does not match $J_{1,2} - J_{2, 1}$. The main reason is that (i) in the transition from $(0,1,2)$ where the $2$ displaces the $1$, both particles end up crossing the edge $(3,1)$.

\bibliography{lrep}
\bibliographystyle{alpha}

\end{document}